\def\beq{\begin{equation}}
\def\eeq{\end{equation}}
\theoremstyle{definition}
\newtheorem{definition}{Definition}
\newtheorem{problem}{Problem}
\newtheorem{example}{Example}
\newtheorem*{rema}{Remark}
\theoremstyle{plain}
\newtheorem{theorem}{Theorem}
\newtheorem{claim}{Claim}
\newtheorem{lemma}{Lemma}
\newtheorem{corollary}{Corollary}
\newtheorem{proposition}{Proposition}
\numberwithin{equation}{section}
\numberwithin{proposition}{section}
\numberwithin{definition}{section}
\numberwithin{theorem}{section}
\numberwithin{problem}{section}
\numberwithin{example}{section}
\numberwithin{claim}{section}
\numberwithin{fact}{section}
\numberwithin{lemma}{section}
\numberwithin{conjecture}{section}
\numberwithin{corollary}{section}
\numberwithin{observation}{section}
\begin{document}

\title{On strengthenings of the intersecting shadow theorem}

\author{by P. Frankl and G. O. H. Katona
\\
R\'enyi Institute, Budapest, Hungary}

\date{}
\maketitle

\begin{abstract}
Let $n > k > t \geq j \geq 1$ be integers.
Let $X$ be an $n$-element set, ${X\choose k}$ the collection of its $k$-subsets.
A family $\mathcal F \subset {X\choose k}$ is called $t$-intersecting if $|F \cap F'| \geq t$ for all $F, F' \in \mathcal F$.
The $j$'th shadow $\partial^j \mathcal F$ is the collection of all $(k - j)$-subsets that are contained in some member of~$\mathcal F$.
Estimating $|\partial^j \mathcal F|$ as a function of $|\mathcal F|$ is a widely used tool in extremal set theory.
A classical result of the second author (Theorem \ref{th:1.3}) provides such a bound for $t$-intersecting families.
It is best possible for $|\mathcal F| = {2k - t\choose k}$.

Our main result is Theorem \ref{th:1.4} which gives an asymptotically optimal bound on $|\partial^j \mathcal F| / |\mathcal F|$ for $|\mathcal F|$  slightly larger, e.g., $|\mathcal F| > \frac32 {2k - t\choose k}$.
We provide further improvements for $|\mathcal F|$ very large as well.
\end{abstract}

\section{Introduction}
\label{sec:1}

Throughout the paper $n, k, t$ are positive integers, $n > k > t$.
Let $[n] = \{1, 2, \dots, n\}$ be the standard $n$-element set and ${[n]\choose k}$ the collection of all its $k$-subsets.
For a family $\mathcal F \subset {[n]\choose k}$ and $0 < j < k$ define the \emph{$j$'th shadow}
$\partial^j \mathcal F = \left\{G \in {[n]\choose k - j} : \exists F \in \mathcal F, G \subset F\right\}$.

Estimating the minimum possible size, $|\partial^j \mathcal F|$ in function of $|\mathcal F|$ has proved to be one of the most important tools of extremal set theory.
As a matter of fact, the first paper written on this subject, due to Sperner, is heavily relying on such a bound.

\begin{proposition}[Sperner \cite{S}]
\label{prop:1.1}
Suppose that $\emptyset \neq \mathcal F \subset {[n]\choose k}$, $0 < j < k$.
Then
\beq
\label{eq:1.1}
|\partial^j \mathcal F| / |\mathcal F| \geq {n\choose k - j} \biggm/ {n\choose k}
\eeq
with equality holding iff $\mathcal F = {[n]\choose k}$.
\end{proposition}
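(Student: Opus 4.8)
The plan is to prove \eqref{eq:1.1} by double counting the incidences between $\mathcal F$ and its shadow. Consider the set of pairs
\[
\mathcal P = \bigl\{(F,G) : F \in \mathcal F,\ G \in \partial^j\mathcal F,\ G \subset F\bigr\}.
\]
Counting $\mathcal P$ by its first coordinate: each $F \in \mathcal F$ has exactly $\binom{k}{j}$ subsets of size $k-j$, and every one of them lies in $\partial^j\mathcal F$ by the definition of the shadow, so $|\mathcal P| = |\mathcal F|\binom{k}{j}$. Counting $\mathcal P$ by its second coordinate: a fixed $(k-j)$-element set $G$ is contained in exactly $\binom{n-k+j}{j}$ members of $\binom{[n]}{k}$, hence appears in at most that many pairs, so $|\mathcal P| \le |\partial^j\mathcal F|\binom{n-k+j}{j}$. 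Combining the two estimates,
\[
\frac{|\partial^j\mathcal F|}{|\mathcal F|} \ge \frac{\binom{k}{j}}{\binom{n-k+j}{j}},
\]
and expanding all four binomial coefficients as falling factorials shows at once that $\binom{k}{j}\big/\binom{n-k+j}{j}$ equals $\binom{n}{k-j}\big/\binom{n}{k}$, which is the asserted bound.

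For the equality case, suppose \eqref{eq:1.1} holds with equality. Then the second count must be tight, i.e.\ for every $G \in \partial^j\mathcal F$ \emph{all} $\binom{n-k+j}{j}$ of its $k$-supersets belong to $\mathcal F$. Now fix any $F_0 \in \mathcal F$, which exists since $\mathcal F \neq \emptyset$, and let $F \in \binom{[n]}{k}$ be arbitrary. Connect $F_0$ to $F$ by a sequence $F_0, F_1, \dots, F_m = F$ of $k$-sets in which consecutive sets differ in exactly one element (such a sequence exists because $n > k$). If $F_i \in \mathcal F$, then since $|F_i \cap F_{i+1}| = k-1 \ge k-j$ we may pick $G \subset F_i \cap F_{i+1}$ with $|G| = k-j$; this $G$ lies in $\partial^j\mathcal F$ because $G \subset F_i \in \mathcal F$, and hence its superset $F_{i+1}$ lies in $\mathcal F$. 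Inducting along the sequence gives $F \in \mathcal F$, so $\mathcal F = \binom{[n]}{k}$. Conversely, if $\mathcal F = \binom{[n]}{k}$ then $\partial^j\mathcal F = \binom{[n]}{k-j}$ and \eqref{eq:1.1} holds with equality.

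I do not expect a genuine obstacle here: the whole argument is elementary double counting together with one binomial identity. The only places that call for a little care are verifying that identity and, in the equality analysis, the small observations that the ``one-element-swap'' graph on $\binom{[n]}{k}$ is connected and that a single swap preserves an intersection of size $k-1 \ge k-j$ — this last point is exactly what lets membership in $\mathcal F$ propagate, and it is where the hypothesis $j \ge 1$ is used.
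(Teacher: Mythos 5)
Your proof is correct. The double counting of incidences between $\mathcal F$ and $\partial^j\mathcal F$ gives $|\mathcal F|\binom{k}{j} \le |\partial^j\mathcal F|\binom{n-k+j}{j}$, the identity $\binom{k}{j}/\binom{n-k+j}{j} = \binom{n}{k-j}/\binom{n}{k}$ is a routine cancellation of factorials, and your treatment of the equality case is sound: tightness in the second count forces every $G \in \partial^j\mathcal F$ to have \emph{all} of its $k$-supersets in $\mathcal F$, and since the Johnson graph $J(n,k)$ is connected when $n > k > 0$, the propagation argument (using $j \ge 1$ so that consecutive sets along a swap path share a common $(k-j)$-subset) reaches every $k$-set.

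There is nothing to compare against in the paper itself: Proposition~\ref{prop:1.1} is stated as a classical result with a citation to Sperner and is not proved there. The argument you give is essentially the standard one (and is close to Sperner's original count for $j=1$, extended to general $j$). One could alternatively deduce it from the LYM/normalized-matching framework or from the Kruskal--Katona theorem, but for this bound alone the double count you use is the most elementary and economical route, and it yields the equality characterization for free via the connectivity observation.
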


The classical Kruskal--Katona Theorem (\cite{Kr}, \cite{Ka2}) determines the minimum of $|\partial^j \mathcal F|$, given $|\mathcal F|$.

For $j = 1$ the notation $\partial\mathcal F$ is common and $\partial\mathcal F$ is called the \emph{immediate shadow}.

\setcounter{definition}{1}

\begin{definition}
\label{def:1.2}
Let $0 \leq \ell < k$, $\mathcal F \subset {[n]\choose k}$.
Define the \emph{$\ell$-shadow} $\sigma_\ell(\mathcal F)$ by
$$
\sigma_\ell(\mathcal F) = \left\{G \in {[n]\choose \ell} : \exists F \in \mathcal F, \, G \subset F\right\}.
$$
Note that $\partial \mathcal F = \sigma_{k - 1} (\mathcal F)$ and $\partial^{k - \ell} \mathcal F = \sigma_\ell(\mathcal F)$.
\end{definition}

One of the most widely investigated properties in extremal set theory is the $t$-intersecting property.
For $t \geq 1$, $\mathcal F$ is said to be \emph{$t$-intersecting} if $|F \cap F'| \geq t$ for all $F, F' \in \mathcal F$.
For $t = 1$, the term \emph{intersecting} is used as well.

A widely used result of the second author shows that $|\partial^j \mathcal F| \geq |\mathcal F|$ for $0 < j \leq t$ provided that $\mathcal F$ is $t$-intersecting.

\setcounter{theorem}{2}

\begin{theorem}[Intersecting Shadow Theorem \cite{Ka1}]
\label{th:1.3}
Suppose that $\emptyset \neq \mathcal F \subset {[n]\choose k}$, $\mathcal F$ is $t$-intersecting, $k - t \leq \ell < k$.
Then
\beq
\label{eq:1.2}
\bigl|\sigma_\ell(\mathcal F)\bigr| \bigm/|\mathcal F| \geq {2k - t\choose \ell} \biggm/{2k - t\choose k}
\eeq
with strict inequality unless $\mathcal F = {Y\choose k}$ for some $2k - t$-element set~$Y$.
\end{theorem}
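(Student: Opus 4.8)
The plan is to prove \eqref{eq:1.2} by passing to a shifted family, running an induction that peels off the largest point, and then reading off the extremal families from the cases of equality.

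First I would reduce to left-compressed (shifted) families. For $i<j$ let $S_{ij}$ be the usual shifting operator on subsets of $[n]$: applying it to $\mathcal F\subset\binom{[n]}{k}$ preserves $|\mathcal F|$, preserves the $t$-intersecting property, and does not increase $|\sigma_\ell(\mathcal F)|$ for any $\ell<k$ (all standard lemmas). Since iterated shifting terminates, it suffices to prove \eqref{eq:1.2} for shifted $\mathcal F$; for the equality clause one also records that if $\mathcal F$ already minimises $|\sigma_\ell(\mathcal F)|/|\mathcal F|$ then every shift was an equality. It is worth noting at the outset that whenever the ground set has at most $2k-t$ points, \eqref{eq:1.2} is immediate from Sperner's Proposition~\ref{prop:1.1} together with the elementary fact that $\binom{x}{\ell}/\binom{x}{k}$ is decreasing in $x$ for $\ell<k$; this is the base of the induction, so the content is concentrated on ground sets with more than $2k-t$ points, which can genuinely occur for shifted $t$-intersecting families.

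Now let $\mathcal F$ be shifted. Since $\mathcal F$ is shifted and nonempty it contains $\{1,\dots,k\}$, and since replacing the element $n$ in a set by a smaller missing element is a down-shift, the link $\mathcal F_1:=\{F\setminus\{n\}:n\in F\in\mathcal F\}$ lies inside $\partial\mathcal F_0$, where $\mathcal F_0:=\{F\in\mathcal F:n\notin F\}$. Splitting $\sigma_\ell(\mathcal F)$ according to whether a set contains $n$, and using $\mathcal F_1\subseteq\partial\mathcal F_0$ to see that the part avoiding $n$ is precisely $\sigma_\ell(\mathcal F_0)$, one obtains the identities
\[
|\sigma_\ell(\mathcal F)|=|\sigma_\ell(\mathcal F_0)|+|\sigma_{\ell-1}(\mathcal F_1)|,\qquad |\mathcal F|=|\mathcal F_0|+|\mathcal F_1|.
\]
The family $\mathcal F_0$ is again $t$-intersecting and lives on $[n-1]$, so the induction hypothesis lower-bounds $|\sigma_\ell(\mathcal F_0)|$; everything now hinges on the term $|\sigma_{\ell-1}(\mathcal F_1)|$. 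This is the hard part. In general $\mathcal F_1$ is only $(t-1)$-intersecting — for $t=1$ it carries no intersection condition at all — so no estimate of $|\sigma_{\ell-1}(\mathcal F_1)|$ in terms of $\mathcal F_1$ alone can be strong enough: its natural normalisation is $\binom{2k-t-1}{\cdot}$ rather than $\binom{2k-t}{\cdot}$. The missing strength must come from the fact that $\mathcal F_0$ and $\mathcal F_1$ are \emph{cross}-$t$-intersecting: for $n\notin A\in\mathcal F_0$ and $B\in\mathcal F_1$ one has $|A\cap B|=|A\cap(B\cup\{n\})|\ge t$. Hence the statement I would actually aim to prove, and the one that reproduces itself under the peeling above, is a cross-$t$-intersecting strengthening of Theorem~\ref{th:1.3}: a lower bound for a suitable combination of $|\sigma_\ell(\mathcal A)|$ and $|\sigma_{\ell-1}(\mathcal B)|$, weighted (in the spirit of the Bollob\'as--Kleitman inequalities) by $|\mathcal B|$ and $|\mathcal A|$, valid for every cross-$t$-intersecting pair of uniform families $(\mathcal A,\mathcal B)$, with Theorem~\ref{th:1.3} recovered by taking $\mathcal A=\mathcal B=\mathcal F$.

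I expect the real obstacle to be pinning down the precise form of this cross-intersecting inequality: it must be strong enough to drive the induction, true in general, and sharp exactly for the pairs built from a common $(2k-t)$-element set, so that tightness propagates correctly. (The endpoint $\ell=k-t$, where \eqref{eq:1.2} just asserts $|\sigma_{k-t}(\mathcal F)|\ge|\mathcal F|$, is probably the cleanest place to locate the right formulation first.) With that inequality in hand the rest is routine: the base cases are the small ground sets and small values of $k$ treated as above, the inductive step is the displayed identity plus bookkeeping, and the equality characterisation is obtained by tracing tightness backwards — equality in the cross-intersecting bound forces $\mathcal F_0,\mathcal F_1$ to be the canonical families on one $(2k-t)$-set, equality in the peeling identity is automatic, and equality through the shifts of the first step forces the original $\mathcal F$ to equal $\binom{Y}{k}$ for a $(2k-t)$-element set $Y$.
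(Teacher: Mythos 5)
Theorem~\ref{th:1.3} is a cited classical result, not something the paper proves in Section~\ref{sec:1}; the closest the paper comes to a proof is Theorem~\ref{th:2.10}, which with $w=k-t$ and $j=k-\ell$ gives $\bigl|\partial^j\mathcal F\bigr|\ge|\mathcal F|\binom{2k-t}{\ell}/\binom{2k-t}{k}$ for shifted (indeed pseudo) $t$-intersecting families. That argument does not peel off the top vertex: it partitions a shifted family by the tail $T(F)=F\setminus[t+2h(F)]$ coming from Proposition~\ref{prop:2.2}, proves that the restricted shadows $\partial^j_R\mathcal F_T$ of distinct tail classes are pairwise disjoint (Lemma~\ref{lem:2.9}), and reduces each class to Sperner's Proposition~\ref{prop:1.1} inside a window $[t+2h]$ together with the monotonicity in Proposition~\ref{prop:2.11}. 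Your $n$-induction through $\mathcal F_0,\mathcal F_1$ is a genuinely different route, and the preparatory parts of it are correct: shifting reduction, the identity $|\sigma_\ell(\mathcal F)|=|\sigma_\ell(\mathcal F_0)|+|\sigma_{\ell-1}(\mathcal F_1)|$ via $\mathcal F_1\subseteq\partial\mathcal F_0$, the observation that $\mathcal F_1$ alone is only $(t-1)$-intersecting, and the observation that $(\mathcal F_0,\mathcal F_1)$ is cross-$t$-intersecting.

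The gap is that the proof stops exactly where the work begins. The entire argument rests on a cross-$t$-intersecting strengthening of \eqref{eq:1.2} that you never state, and you explicitly flag ``pinning down the precise form of this cross-intersecting inequality'' as the obstacle; that is an acknowledgement that no proof has been supplied. Worse, the plan as written is not internally consistent. The peeling step produces a cross-$t$-intersecting pair of \emph{different} uniformities ($k$ and $k-1$), so the lemma you need must be a mixed-uniformity inequality involving $\sigma_\ell$ of one family and $\sigma_{\ell-1}$ of the other; yet you propose to recover Theorem~\ref{th:1.3} ``by taking $\mathcal A=\mathcal B=\mathcal F$,'' which in the Bollob\'as--Kleitman form $|\mathcal B|\,|\sigma_\ell(\mathcal A)|+|\mathcal A|\,|\sigma_{\ell-1}(\mathcal B)|\ge c\,|\mathcal A|\,|\mathcal B|$ would only bound $|\sigma_\ell(\mathcal F)|+|\sigma_{\ell-1}(\mathcal F)|$, not $|\sigma_\ell(\mathcal F)|$. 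An induction whose step leaves the class of statements being proved must take the broader statement as the induction hypothesis and verify that \emph{that} statement reproduces itself; none of this is set up. Finally, the equality characterization (only $\mathcal F=\binom{Y}{k}$ with $|Y|=2k-t$) needs its own argument: ``every shift was an equality'' does not by itself yield that the unshifted $\mathcal F$ is the full $k$-level of a $(2k-t)$-set, and tightness must also be propagated through the as-yet-unstated cross-intersecting lemma.
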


Note that for $n \leq 2k - t$ the inequality \eqref{eq:1.2} can be deduced from Sperner's bound \eqref{eq:1.1}.
However for fixed $k$ and $n$ tending to infinity the RHS of \eqref{eq:1.1} tends to~$0$ while the RHS of \eqref{eq:1.2} is at least~$1$.
To be more exact, for $\ell = k - 1$ its value is $k\bigm/(k - t + 1)$.
For $t \geq 2$ this is strictly larger than~$1$.
Our first result gives a further improvement provided that $|\mathcal F| \geq \left(1 + \frac{t - 1}{k + t}\right) {2k - t\choose k}$.

\begin{theorem}
\label{th:1.4}
Suppose that $\mathcal F \subset {[n]\choose k}$, $\mathcal F$ is $t$-intersecting, $1 \leq j < t < k$, $|\mathcal F| \geq {2k - t\choose k} \left(1 + \frac{t - j}{k + t + 1 - j}\right)$.
Then
\beq
\label{eq:1.3}
\bigl|\partial^j \mathcal F| \bigm/ |\mathcal F| \geq {2(k - 1) - t\choose k - 1 - j} \biggm/ {2(k - 1) - t\choose k - 1}.
\eeq
\end{theorem}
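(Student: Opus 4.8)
The plan is to combine the shifting technique with an induction that peels off the element $1$, thereby reducing an instance with parameters $(k,t,j)$ to the two instances governing the link and the anti‑link at $1$, namely $(k-1,t-1,\cdot)$ (for which $k$ drops) and $(k,t+1,\cdot)$ (for which $k-t$ drops).

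First I would reduce to the case that $\mathcal F$ is \emph{shifted}, i.e. stable under all left‑compressions $S_{ab}$ ($a<b$): these preserve $|\mathcal F|$ and the $t$‑intersecting property and do not increase $|\partial^j\mathcal F|$. Writing $\mathcal F_1=\{F\setminus\{1\}:1\in F\in\mathcal F\}$ and $\mathcal F_0=\{F\in\mathcal F:1\notin F\}$ on the ground set $\{2,\dots,n\}$, two facts fall out of stability under the shifts $S_{1,x}$. (i) Every $(k-j)$‑set of $\partial^j\mathcal F$ avoiding $1$ already lies in $\partial^{j-1}\mathcal F_1$, while those containing $1$ correspond bijectively to $\partial^j\mathcal F_1$; hence
\[
  |\partial^j\mathcal F| \;=\; |\partial^{j-1}\mathcal F_1| + |\partial^j\mathcal F_1| .
\]
(ii) The family $\mathcal F_0$ is in fact $(t+1)$‑intersecting, and every $(k-1)$‑subset of a member of $\mathcal F_0$ lies in $\mathcal F_1$; in particular $\partial\mathcal F_0\subseteq\mathcal F_1$, so $\partial^{j-1}\mathcal F_1\supseteq\partial^j\mathcal F_0$ and $\partial^j\mathcal F_1\supseteq\partial^{j+1}\mathcal F_0$.

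Now I would induct, lexicographically on $(k,k-t)$, so that both $(k-1,t-1,\cdot)$ and $(k,t+1,\cdot)$ are strictly smaller instances. The base case is $k=t+1$: a $t$‑intersecting $(t+1)$‑uniform family of size exceeding $t+2$ must be a ``pencil'' $\{T_0\cup\{x\}:x\in\mathcal A\}$ for a fixed $t$‑set $T_0$ (otherwise it lives on a $(t+2)$‑element ground set and has at most $\binom{t+2}{t+1}=t+2$ members), for which $|\partial^j\mathcal F|=\binom tj|\mathcal F|+\binom{t}{j-1}$ and the required ratio is exactly $\binom tj$. For the inductive step I would bound each of $|\partial^{j-1}\mathcal F_1|$, $|\partial^j\mathcal F_1|$ in two ways: once via the inductive hypothesis (or, at the boundary $j=t-1$, Theorem~\ref{th:1.3}) applied to the $(t-1)$‑intersecting family $\mathcal F_1$ with parameters $(k-1,t-1,j-1)$ and $(k-1,t-1,j)$; and once via the inclusions in (ii) together with the inductive hypothesis applied to the $(t+1)$‑intersecting family $\mathcal F_0$ with parameters $(k,t+1,j)$ and $(k,t+1,j+1)$. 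Adding the two shadow orders in each case by Pascal's identity, and abbreviating the target ratio of \eqref{eq:1.3} by $R$, one obtains
\[
  |\partial^j\mathcal F|\ \ge\ \tfrac{2k-t-2}{k-1}\,R\,|\mathcal F_1|
  \qquad\text{and}\qquad
  |\partial^j\mathcal F|\ \ge\ \tfrac{2k-t-2}{k-t-1}\,R\,|\mathcal F_0| .
\]
The key point is that the reciprocals of the two weights sum to $1$, so by the mediant inequality $\max(a/p,b/q)\ge(a+b)/(p+q)$ the two bounds together give $|\partial^j\mathcal F|\ge R\,(2k-t-2)\,\dfrac{|\mathcal F_0|+|\mathcal F_1|}{(k-1)+(k-t-1)}=R\,|\mathcal F|$, which is \eqref{eq:1.3}.

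The hard part is that the two displayed inequalities each require the inductive hypothesis on the relevant side, hence that $\mathcal F_1$ (resp. $\mathcal F_0$) be above its own size threshold; this is exactly where the hypothesis $|\mathcal F|\ge\binom{2k-t}{k}\bigl(1+\tfrac{t-j}{k+t+1-j}\bigr)$ enters. One splits into the two overlapping cases $|\mathcal F_0|\le\tfrac{k-t-1}{k-1}|\mathcal F_1|$ and $|\mathcal F_0|\ge\tfrac{k-t-1}{k-1}|\mathcal F_1|$ (which between them need only one of the two displayed bounds each), and in each case the side one must estimate is forced above its threshold provided the thresholds satisfy the appropriate monotonicity across the recursion. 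Verifying these binomial inequalities, and, in the borderline sub‑cases where a side sits just below its threshold, supplementing the inductive hypothesis there either by Theorem~\ref{th:1.3} or by a further peeling (splitting $\mathcal F_0$ again at $2$, which produces a $(t+2)$‑intersecting family and iterates fact (ii)), is the technical core; I expect the precise factor $\tfrac{t-j}{k+t+1-j}$ in the hypothesis is dictated by making exactly these case inequalities go through.
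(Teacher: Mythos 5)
Your decomposition and induction scaffold are genuinely different from the paper's proof, and the algebra you set up is elegant: the identity $|\partial^j\mathcal F|=|\partial^{j-1}\mathcal F_1|+|\partial^j\mathcal F_1|$, the facts that $\mathcal F_1$ is $(t-1)$-intersecting (clear), that $\mathcal F_0$ is $(t+1)$-intersecting (true — for $F,G\in\mathcal F_0$ and $x\in F\cap G$, the set $(F\setminus\{x\})\cup\{1\}$ is in $\mathcal F$ by shiftedness and meets $G$ in $|F\cap G|-1$ elements), the inclusions $\partial^j\mathcal F_0\subset\partial^{j-1}\mathcal F_1$ and $\partial^{j+1}\mathcal F_0\subset\partial^j\mathcal F_1$, Pascal to get the two weights $\tfrac{2k-t-2}{k-1}$ and $\tfrac{2k-t-2}{k-t-1}$, and the observation that the reciprocals of these sum to $1/R$ — all of this is correct and checks out, and the base case $k=t+1$ is fine.

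However, there is a genuine gap exactly where you flag the ``technical core,'' and it does not close along the lines you sketch. Each of the two displayed bounds is only available when the corresponding sub-family exceeds the size threshold of the inductive hypothesis, and those thresholds do not propagate. Concretely, in the case $|\mathcal F_0|<\frac{k-t-1}{k-1}|\mathcal F_1|$ (so $|\mathcal F_1|>\frac{k-1}{2k-t-2}|\mathcal F|$ and you must use the pure $\mathcal F_1$ bound), you need $|\mathcal F_1|$ to exceed the threshold for the call $(k-1,\,t-1,\,j-1)$, which is
$T_1=\binom{2k-t-1}{k-1}\bigl(1+\tfrac{t-j}{k+t-j}\bigr)$.
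From the hypothesis on $|\mathcal F|$ you only get
$|\mathcal F_1|\ge\frac{k-1}{2k-t-2}\binom{2k-t}{k}\bigl(1+\tfrac{t-j}{k+t+1-j}\bigr)$,
and the inequality
\[
\frac{(k-1)(2k-t)}{k(2k-t-2)}\cdot\frac{\,1+\frac{t-j}{k+t+1-j}\,}{\,1+\frac{t-j}{\,k+t-j\,}\,}\ \ge\ 1
\]
required to bridge the two is equivalent to $t\,(k+2m+1)(k+m)\ge m\,(k-1)(2k-t)$ with $m=t-j$, which for $k\to\infty$ forces $t\ge 2m$, i.e.\ $j\ge t/2$. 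For $2\le j<t/2$ and $k$ large it is false: e.g.\ for $(t,j)=(5,2)$ one computes that the ratio above is $\bigl(1+\tfrac{5}{2k^2-7k}\bigr)\bigl(1-\tfrac{3}{(k+4)(k+6)}\bigr)<1$ for all $k\ge 73$. The ``mixed'' bounds obtained by bounding one term of $|\partial^{j-1}\mathcal F_1|+|\partial^j\mathcal F_1|$ via $\mathcal F_1$ and the other via $\mathcal F_0$ do not rescue this case either: both mixes dominate $R|\mathcal F|$ only under the condition $|\mathcal F_0|\ge\frac{k-t-1}{k-1}|\mathcal F_1|$, i.e.\ precisely not in the problematic case. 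So ``further peeling'' would have to be substantially developed; it is not a routine binomial verification.

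This is exactly the difficulty the paper's proof is built to avoid. There $\mathcal F$ is split as $\mathcal F_{\mathrm{in}}=\mathcal F\cap\binom{[2k-t]}{k}$ and $\mathcal F_{\mathrm{out}}=\mathcal F\setminus\mathcal F_{\mathrm{in}}$; the shadow of $\mathcal F_{\mathrm{in}}$ is controlled by the Intersecting Shadow Theorem and that of $\mathcal F_{\mathrm{out}}$ by Theorem~\ref{th:2.10} (after Proposition~\ref{prop:3.1} shows $\mathcal F_{\mathrm{out}}$ is pseudo $(t+1)$-intersecting of reduced width). Both of these bounds are \emph{unconditional} in the size of the respective piece, so the only place the hypothesis on $|\mathcal F|$ enters is in guaranteeing $|\mathcal F_{\mathrm{out}}|\ge(\alpha/\beta)|\mathcal F|$, which follows immediately from $|\mathcal F_{\mathrm{in}}|\le\binom{2k-t}{k}$. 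Your argument, by contrast, recurses into a theorem whose applicability is itself gated by a size hypothesis, and that gate is what fails.
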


Let us mention that the requirement on $|\mathcal F|$ is relatively weak, e.g., it is weaker than $|\mathcal F| \geq \frac32 {2k - t\choose k}$.
For $j = 1$, the most widely used case, \eqref{eq:1.3} reduces to
$$
|\partial \mathcal F| / |\mathcal F| \geq \frac{k - 1}{k - t}.
$$

At first sight it might appear to be only a small improvement with respect to $\frac{k}{k - t + 1}$, coming from \eqref{eq:1.2}.
However, for $k$ and $t$ fixed the difference is substantial.
Most importantly, the new bound is essentially best possible.

\setcounter{example}{4}
\begin{example}
\label{ex:1.5}
Fix $k > t > 2$ and an integer $s$, $0 \leq s < k - t - 1$.
Define $\mathcal A = \left\{A \in {[2k - t]\choose k} : |A \cap [k - 1 + s]| \geq t + s\right\}$, \ $\mathcal B = \Bigl\{B \in {[n]\choose k}, B_0 \cup \{x\}$, $B_0 \in {[k - 1 + s]\choose k - 1}, x \in [2k - t + 1, n]\Bigr\}$.
Set $\mathcal F = \mathcal A \cup \mathcal B$.
Then $\mathcal F$ is $t$-intersecting.
\end{example}

\setcounter{proposition}{5}
\begin{proposition}
\label{prop:1.6}
For a proper choice of $s$ and $n$, Example \ref{ex:1.5} shows that \eqref{eq:1.3} does not hold for $k > k_0(j)$ even if
$$
|\mathcal F| = \left(1 + \frac{j(t - j) s(s - 1) \cdot \ldots \cdot (s - j + 1)}{(k - 1)^{j + 2}} - o(1)\right) {2k - t \choose k}.
$$
\end{proposition}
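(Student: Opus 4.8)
The plan is to compute $|\mathcal F|$ and $|\partial^j\mathcal F|$ exactly, and then to pick $s$ and $n$ so that $|\mathcal F|$ attains the stated size while $|\partial^j\mathcal F|/|\mathcal F|$ stays strictly below the right-hand side of \eqref{eq:1.3}. Write $Y=[k-1+s]$ and $r=n-(2k-t)=|[2k-t+1,n]|$. Sorting the $k$-subsets $A\subseteq[2k-t]$ by $a:=|A\cap Y|$, one checks that over all of $\binom{[2k-t]}{k}$ the parameter $a$ ranges from $t+s-1$ upward and that $\mathcal A$ is exactly $\binom{[2k-t]}{k}$ with the single bottom layer $a=t+s-1$ deleted, so $|\mathcal A|=\binom{2k-t}{k}-\binom{k-1+s}{k-t}$. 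Since every member of $\mathcal B$ uses a point of $[2k-t+1,n]$, we have $\mathcal A\cap\mathcal B=\varnothing$ and $|\mathcal B|=r\binom{k-1+s}{k-1}$, hence
\[
|\mathcal F|=\binom{2k-t}{k}-\binom{k-1+s}{k-t}+r\binom{k-1+s}{k-1}.
\]
The same analysis for $(k-j)$-sets shows that $G\in\binom{[2k-t]}{k-j}$ belongs to $\partial^j\mathcal A$ exactly when $|G\cap Y|\ge t+s-j$ (having that many points of $Y$ is precisely what permits extending $G$ to a member of $\mathcal A$ inside $[2k-t]$), so $|\partial^j\mathcal A|=\binom{2k-t}{k-j}-\binom{k-1+s}{k-t+j}$; moreover the members of $\partial^j\mathcal B$ not already in $\partial^j\mathcal A$ are exactly the $r\binom{k-1+s}{k-1-j}$ sets $G_0\cup\{x\}$ with $G_0\in\binom{Y}{k-1-j}$ and $x>2k-t$. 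Therefore
\[
|\partial^j\mathcal F|=\binom{2k-t}{k-j}-\binom{k-1+s}{k-t+j}+r\binom{k-1+s}{k-1-j}.
\]

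Put $\rho_A=\binom{2k-t}{k-j}/\binom{2k-t}{k}$ and $\rho_B=\binom{k-1+s}{k-1-j}/\binom{k-1+s}{k-1}$, and let $R$ denote the right-hand side of \eqref{eq:1.3}. The identity that makes the construction work is the factorial computation $R/\rho_A=\dfrac{(k-j)(k-t+j)}{k(k-t)}=1+\dfrac{j(t-j)}{k(k-t)}>1$, i.e.\ $R>\rho_A$: the full $t$-intersecting family $\binom{[2k-t]}{k}$ (of size $\binom{2k-t}{k}$, with $j$'th shadow $\binom{2k-t}{k-j}$) \emph{already} violates \eqref{eq:1.3}, and the role of $\mathcal B$ is only to enlarge $|\mathcal F|$ a little without spoiling this. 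From $s<k-t-1$ one likewise gets $\rho_B=\prod_{i=1}^{j}\frac{k-i}{s+i}>\prod_{i=1}^{j}\frac{k-i}{k-t-1+i}=R$. Substituting the two displays above, the failure of \eqref{eq:1.3}, i.e.\ $|\partial^j\mathcal F|<R\,|\mathcal F|$, is equivalent to
\[
r\,(\rho_B-R)\binom{k-1+s}{k-1}<(R-\rho_A)\binom{2k-t}{k}+\binom{k-1+s}{k-t+j}-R\binom{k-1+s}{k-t};
\]
here $\rho_B-R>0$, and for $k$ large the right-hand side is positive because the exponential term $(R-\rho_A)\binom{2k-t}{k}$ dominates the two polynomial-in-$k$ corrections, so this holds for every integer $r$ below the explicit threshold $r_0$ obtained by dividing.

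It remains to choose $s$ and $r$ so that $|\mathcal F|$ has the claimed size. I would take $s$ fixed with $j\le s<k-t-1$, so that $\binom{k-1+s}{k-1}=o\bigl(\binom{2k-t}{k}\bigr)$, and then let $r$ be the largest integer with $r\binom{k-1+s}{k-1}-\binom{k-1+s}{k-t}\le\epsilon\binom{2k-t}{k}$, where $\epsilon=\dfrac{j(t-j)\,s(s-1)\cdots(s-j+1)}{(k-1)^{j+2}}$; this places $|\mathcal F|$ in the interval $\bigl((1+\epsilon-o(1))\binom{2k-t}{k},\,(1+\epsilon)\binom{2k-t}{k}\bigr]$, exactly the size required. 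That this $r$ lies below $r_0$ reduces, after clearing denominators and discarding lower-order terms, to $\epsilon<\dfrac{R-\rho_A}{\rho_B-R}$; and using $\rho_B-R<\rho_B$ together with $R-\rho_A=\rho_A\,\dfrac{j(t-j)}{k(k-t)}$, a direct computation yields $\dfrac{R-\rho_A}{\rho_B}=\dfrac{\epsilon}{\delta_k}$ with
\[
\delta_k=\frac{(k-t)(k-j)\prod_{i=1}^{j}(k-t+i)}{(k-1)^{j+2}}\cdot\frac{(s!)^2}{(s-j)!\,(s+j)!}.
\]
So it suffices to verify $\delta_k<1$, which is elementary: the $j+2$ numerator factors $k-t,\,k-j,\,k-t+1,\dots,k-t+j$ are each at most $k-1$ (this is where $t>j\ge1$ is used), so the first fraction is $<1$, while $\dfrac{(s!)^2}{(s-j)!(s+j)!}=\prod_{i=0}^{j-1}\dfrac{s-i}{s+j-i}<1$ since $s\ge j\ge1$.

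The main obstacle is purely the bookkeeping of the ``$o(1)$'': one must confirm that the binomials $\binom{k-1+s}{k-t}$, $\binom{k-1+s}{k-t+j}$ and the rounding from taking $r$ integral are negligible next to $(R-\rho_A)\binom{2k-t}{k}$ and to $\binom{2k-t}{k}$, respectively. In the relevant range of parameters (say $j,t$ bounded, $k\to\infty$) this is routine, since $\binom{2k-t}{k}$ grows exponentially while $R-\rho_A$ is only polynomially small and the corrections, of size $\binom{k-1+s}{k-1}$, are polynomial in $k$ for the chosen $s$; carrying these estimates through carefully is exactly what fixes the threshold $k_0(j)$.
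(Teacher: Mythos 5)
Correct, and essentially the same approach as the paper's Section~4: both compute $|\mathcal F|$ and $|\partial^j\mathcal F|$ layer by layer from Example~\ref{ex:1.5}, reduce the failure of \eqref{eq:1.3} to a threshold condition on $r=n-2k+t$, and verify that threshold by the same factorial estimate (the $j+2$ numerator factors $k-t,\,k-j,\,k-t+1,\dots,k-t+j$ each at most $k-1$, matched against $(k-1)^{j+2}$, together with $\prod_{0\le i<j}\frac{s-i}{s+j-i}<1$). Your write-up is somewhat more careful than the paper's --- you use exact binomial counts instead of $o(1)$ bookkeeping and state the $r$-threshold as a clean equivalence, bounding $\frac{R-\rho_A}{\rho_B-R}$ from below via $\frac{R-\rho_A}{\rho_B}=\epsilon/\delta_k$ --- but the underlying calculation is the same.
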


The paper is organized as follows.
In Section \ref{sec:2} we review some results concerning shifting and shifted families.
Then we prove Theorem \ref{th:2.10} concerning shadows.
In Section \ref{sec:3} we prove Theorem \ref{th:1.4}, in the very short Section \ref{sec:4} the proof of Proposition \ref{prop:1.6} is provided.

In Section \ref{sec:5} we introduce the notion of a semistar and prove a best possible lower bound on the shadow of $t$-intersecting semistars (Theorem \ref{th:5.5}).
In Section~\ref{sec:6} along with some structural results we prove the best possible bound $\bigl|\partial^j \mathcal F\bigr| > {t\choose j}|\mathcal F|$ for families satisfying $|\mathcal F| > (t + 2){n - t - 1\choose k - t - 1}$, $n > n_0(k, t)$ in a more precise form.

Section \ref{sec:7} contains some more general results.

\section{Preliminaries}
\label{sec:2}

Let $(a_1, \dots, a_k)$ denote the $k$-element set $\{a_1, \dots, a_k\}$ where we know that $a_1 < \ldots < a_k$.
Let us define $\prec$, the \emph{shifting partial order} by setting
$$
(a_1, \dots, a_k) \prec (b_1, \dots, b_k)\ \ \ \ \text{ iff } \ \ \ a_i \leq b_i \ \ \ \text{ for } \ \ 1 \leq i \leq k.
$$

\begin{definition}
\label{def:2.1}
The family $\mathcal F$ is called \emph{shifted} if $(a_1, \dots, a_k) \prec (b_1, \dots, b_k)$ and $(b_1, \dots, b_k) \in \mathcal F$ always imply $(a_1, \dots, a_k) \in \mathcal F$.
\end{definition}

In their seminal paper \cite{EKR}, Erd\H{o}s, Ko and Rado defined a simple operation on families of sets called shifting.
Repeated application of this operation eventually transforms a family into a shifted family.
Erd\H{o}s, Ko and Rado showed that shifting maintains the $t$-intersecting property.
In \cite{Ka1} it is shown that shifting never increases the $\ell$-shadow.
Consequently, it is sufficient to prove Theorem \ref{th:1.4} for shifted families.

On the other hand, shifted $t$-intersecting families have some nice properties.

\setcounter{proposition}{1}
\begin{proposition}[\cite{F78}]
\label{prop:2.2}
Suppose that $\mathcal F \subset {[n]\choose k}$ is shifted and $t$-intersecting.
Then for every $F \in \mathcal F$ there exists an integer $h$, $0 \leq h \leq k - t$ such that
\beq
\label{eq:2.1}
\bigl|F \cap [t + 2h]\bigr| \geq h + t.
\eeq
\end{proposition}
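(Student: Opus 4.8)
The plan is to argue by contradiction: suppose $\mathcal F$ is shifted and $t$-intersecting, but there is some $F \in \mathcal F$ for which \eqref{eq:2.1} fails for every admissible $h$, i.e.\ $|F \cap [t+2h]| \leq h+t-1$ for all $0 \leq h \leq k-t$. From such an $F$ I would produce, by repeatedly shifting down, a single set $G \in \mathcal F$ that is disjoint (or nearly disjoint) from $F$ in a way that violates $|G \cap F| \geq t$; since shiftedness guarantees $G \in \mathcal F$ whenever $G \prec$ some member of $\mathcal F$, it suffices to build $G$ with $G \prec F$ and $|G \cap F| < t$, which is the contradiction.

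The key step is to make the counting in the failure hypothesis explicit. Write $F = (a_1 < a_2 < \dots < a_k)$. The condition $|F \cap [t+2h]| \leq h+t-1$ says that at most $h+t-1$ of the $a_i$ lie in $[t+2h]$; equivalently $a_{h+t} \geq t+2h+1$ for every $h$ with $0 \leq h \leq k-t$. In particular $a_t \geq t+1$, $a_{t+1} \geq t+3$, and inductively $a_{t+i} \geq t+2i+1$. The idea is then to define $G$ by keeping the first $t-1$ elements of $F$ untouched and pushing all the remaining $k-t+1$ elements down onto the "gaps" that the inequalities $a_{t+i} \geq t+2i+1$ create. Concretely one sets the $i$-th of these pushed-down elements to $t+2i$ (for $i = 1, \dots$), so that $G$ uses only $t-1$ of the small integers from $[t]$ plus the even-ish slots, and one checks $G \prec F$ coordinatewise using exactly the inequalities above. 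Then $|G \cap F|$ is small — the only forced common elements are among $a_1, \dots, a_{t-1}$ — so $|G \cap F| \leq t-1$, contradicting the $t$-intersecting property (one must verify $n$ is large enough, i.e.\ $n \geq t + 2(k-t) = 2k-t$, which holds since $n > k > t$ forces $n \geq 2k-t$ only when... actually one checks the construction stays inside $[n]$, and if not, the failure hypothesis is even easier to exploit).

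The main obstacle I expect is getting the bookkeeping of the shift $G \prec F$ exactly right: one has to order the elements of $G$ and match them against $a_1, \dots, a_k$ so that every coordinate inequality holds, and simultaneously ensure $|G \cap F|$ is genuinely below $t$ rather than accidentally reabsorbing some $a_i$. A clean way to handle this is to choose $h$ to be the \emph{largest} index for which the failure inequality is "tight enough" and peel off a minimal counterexample, rather than constructing $G$ all at once; alternatively, one can phrase the whole argument as: among all $F \in \mathcal F$, pick one minimal in the shifting order, observe that minimality plus shiftedness forces $F$ itself to satisfy a rigid structure, and then read off \eqref{eq:2.1} directly from that structure together with $|F \cap F| = k \geq t$. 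Either route reduces the proposition to an elementary, if slightly fiddly, monotone-rearrangement computation.
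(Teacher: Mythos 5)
Your overall strategy is sound and is essentially how \cite{F78} argues: negate the conclusion for some $F = (a_1 < \dots < a_k)$, translate the failure hypothesis into the coordinate inequalities $a_{t+h} \geq t + 2h + 1$ for $0 \leq h \leq k - t$, and exploit shiftedness to land inside $\mathcal F$ with sets that intersect too little. This is the same mechanism as the $E, D$ construction inside the proof of Proposition~\ref{prop:3.1}. But as written there are two concrete problems.

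First, the offsets are off by one. Your construction puts the $i$-th pushed-down element at $t + 2i$, i.e.\ $g_{t+h} = t + 2h + 2$, but the failure hypothesis only guarantees $a_{t+h} \geq t + 2h + 1$, so $g_{t+h} \leq a_{t+h}$ can fail (for instance $a_t = t+1 < t+2$). You need the odd slots $g_{t+h} = t + 2h + 1$; then $G \prec F$ does follow.

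Second, and more seriously, a single auxiliary set does not suffice. Even with the corrected $G = [t-1] \cup \{t+1, t+3, \dots, 2k-t+1\} \prec F$, the claimed inequality $|G \cap F| < t$ is simply false: taking $F = G$ itself one checks $\bigl|F \cap [t+2h]\bigr| = t - 1 + h$ for every $0 \leq h \leq k-t$, so $F$ satisfies your failure hypothesis with equality throughout, yet $|G \cap F| = k$. The common elements are not confined to $a_1, \dots, a_{t-1}$, because each $a_{t+h}$ can equal $t + 2h + 1 \in G$. To close the argument you must produce a \emph{second} set: take $G' = [t] \cup \{t+2, t+4, \dots, 2k-t\}$. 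One verifies coordinatewise that $G' \prec G$ (each entry drops by at most one), so by shiftedness $G' \in \mathcal F$ as well; and $G \cap G' = [t-1]$, so $|G \cap G'| = t - 1 < t$, contradicting the $t$-intersecting property. Your fallback suggestion (peel off an $F$ that is $\prec$-minimal in $\mathcal F$) does not rescue the one-set plan either: the unique $\prec$-minimal element of ${[n] \choose k}$ is $[k]$, which satisfies \eqref{eq:2.1} trivially with $h = 0$ and carries no information about a hypothetical bad $F$. The missing idea is precisely the shift $G \mapsto G'$, the parity flip that makes the two images within $\mathcal F$ nearly disjoint above $[t-1]$.
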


In \cite{F78} the following families were defined:
$$
\mathcal A_h(n, k, t) = \left\{A \in {[n]\choose k} : \bigl|A \cap [t + 2h]\bigr| \geq h + t\right\}.
$$
It is easy to see that $\mathcal A_h(n, k, t)$ is always $t$-intersecting.

In \cite{F78} it was conjectured that for $n \geq 2k - t$,
\beq
\label{eq:2.2}
|\mathcal F| \leq \max \left\{\bigl|\mathcal A_h (n, k, t) \bigr| : 0 \leq h \leq k - t\right\}.
\eeq

In \cite{FF2} \eqref{eq:2.2} was proved for a wide range.
However, it was not before the seminal paper of Ahlswede and Khachatrian \cite{AK1} that \eqref{eq:2.2} was established in its integrity.

It is easy to check that for $k$ and $t$ fixed
$$
\lim_{n \to \infty} \bigl|\partial^j \mathcal A_{k - t - 1}(n, k, t) \bigr|\bigm/\bigl|\mathcal A_{k - t - 1} (n, k, t)\bigr| =
{2(k\! -\! 1)\! -\! t\choose k - 1 - j}\! \biggm/\! {2(k\! -\! 1) \! -\! t\choose k - 1}
$$
which shows that \eqref{eq:1.3} is essentially best possible.

Based on Proposition \ref{prop:2.2} one can define the following relaxation of the $t$-intersecting property.

\setcounter{definition}{2}
\begin{definition}
\label{def:2.3}
The family $\mathcal F \subset {[n] \choose k}$ is said to be \emph{pseudo $t$-intersecting} if for every $F \in \mathcal F$ and some $h$, $0 \leq h \leq k - t$, \eqref{eq:2.1} holds.
\end{definition}

It was shown in \cite{F91} that \eqref{eq:1.2} holds for pseudo $t$-intersecting families as well.

We need some more definitions.

Let $\mathcal F \subset {[n] \choose k}$ be pseudo $t$-intersecting.
Define the \emph{width} $w = w_t(\mathcal F)$ as the minimum integer such that for every $F \in \mathcal F$ \eqref{eq:2.1} holds for some $h$, $0 \leq h \leq w$.
From Definition \ref{def:2.3} it is clear that $w_t(\mathcal F)$ exists and $w_t(\mathcal F) \leq k - t$.
However, in certain situations it needs to be smaller.
For example, define $\mathcal F_{\text{out}} = \mathcal F \setminus {[2k - t]\choose k}$.
For $F \in \mathcal F_{\text{out}}$, $|F \cap [2k - t]| < k$ implies $w_t(\mathcal F_{\text{out}}) \leq k - t - 1$.
This will be very important for our proofs.

\begin{definition}
\label{def:2.4}
Let $\mathcal F \subset {X \choose k}$ be pseudo $t$-intersecting and $w = w_t(\mathcal F)$.
For $F \in \mathcal F$ define its \emph{height} $h(F)$ as
$$
h(F) = \max\left\{h: 0 \leq h \leq w, \bigl|F \cap [t + 2h]\bigr| \geq t + h\right\}.
$$
\end{definition}

\setcounter{claim}{4}

\begin{claim}
\label{cl:2.5}
If $h(F) < w$ then
\beq
\label{eq:2.3}
\bigl|F \cap [t + 2h(F)]\bigr| = t + h(F).
\eeq
\end{claim}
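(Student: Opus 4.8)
The plan is to argue by contradiction using the monotonicity of $|F\cap[m]|$ in $m$. Write $h=h(F)$ and suppose, contrary to \eqref{eq:2.3}, that $\bigl|F\cap[t+2h]\bigr|\geq t+h+1$. Since the hypothesis $h(F)<w$ gives $h+1\leq w$, the value $h+1$ is an admissible candidate in the definition of the height (Definition \ref{def:2.4}). Now $[t+2h]\subset[t+2(h+1)]$, so
$$
\bigl|F\cap[t+2(h+1)]\bigr|\ \geq\ \bigl|F\cap[t+2h]\bigr|\ \geq\ t+h+1\ =\ t+(h+1).
$$
Thus $h+1$ satisfies $\bigl|F\cap[t+2(h+1)]\bigr|\geq t+(h+1)$ with $0\leq h+1\leq w$, which contradicts the maximality of $h=h(F)$ in Definition \ref{def:2.4}. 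Hence $\bigl|F\cap[t+2h(F)]\bigr|\leq t+h(F)$, and since $\bigl|F\cap[t+2h(F)]\bigr|\geq t+h(F)$ holds by the very definition of $h(F)$, equality \eqref{eq:2.3} follows.

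There is essentially no obstacle here: the whole content is the trivial inclusion $[t+2h]\subset[t+2(h+1)]$ together with the bookkeeping observation that $h(F)<w$ is exactly what is needed to make $h(F)+1$ a legitimate competitor. The only point worth stating explicitly is that the inequality $h(F)<w$ (rather than merely $h(F)\leq w$) is used to guarantee $h(F)+1\leq w$; for sets $F$ with $h(F)=w$ the analogous conclusion can fail, which is why the hypothesis cannot be dropped.
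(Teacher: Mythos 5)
Your argument is correct and is essentially the same one the paper gives: assume $\bigl|F\cap[t+2h(F)]\bigr|\geq t+h(F)+1$, use the inclusion $[t+2h(F)]\subset[t+2(h(F)+1)]$ to conclude $\bigl|F\cap[t+2(h(F)+1)]\bigr|\geq t+h(F)+1$, and contradict the maximality of $h(F)$ (here $h(F)<w$ is exactly what makes $h(F)+1$ an admissible candidate). You are merely more explicit about the monotonicity step and about the reverse inequality $\geq t+h(F)$ coming from Definition~\ref{def:2.4}.
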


\begin{proof}
Should $\bigl|F \cap [t + 2h(F)]\bigr| \geq t + h(F) + 1$ hold, we conclude
$$
\bigl|F \cap [t + 2(h(F) + 1)]\bigr| \geq t + h(F) + 1,
$$
contradicting the maximal choice of $h(F)$.
\end{proof}

Let us define the tail $T = T(F)$ for $F \in \mathcal F$ by $T(F) = F \setminus [t + 2h(F)]$.
In view of \eqref{eq:2.3},
\beq
\label{eq:2.4}
|T(F)| = k - t - h(F) \ \ \ \text{ holds if }\ \ \ h(F) < w_t(\mathcal F).
\eeq
If $h(F) = w_t(\mathcal F)$ then either \eqref{eq:2.4} holds or
$$
|T| < k - t - h(F).
$$

\setcounter{definition}{5}
\begin{definition}
\label{def:2.6}
For $0 < j \leq t$ and $F \in \mathcal F$ let us define the \emph{restricted $j$'th shadow} $\partial_R^j F = \left\{G \in {F \choose k - j} : T \subset G\right\}$.
In human language $G$ is obtained from $F$ by arbitrarily deleting $j$ vertices from $F \setminus T$.
\end{definition}

\setcounter{claim}{6}
\begin{claim}
\label{cl:2.7}
If $h(F) < w_t(\mathcal F)$ and $G \in \partial_R^j F$ then {\rm (i)} and {\rm (ii)} hold.

\hspace*{3.4pt}{\rm (i)} $\bigl|G \cap [t + 2h(F)]\bigr| = t - j + h(F)$,

\smallskip
{\rm (ii)} $\bigl|G \cap [t + 2h]\bigr| < t - j + h$ for $h(F) < h \leq w_t(\mathcal F)$.\hfill $\square$
\end{claim}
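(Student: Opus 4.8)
The plan is to argue directly from the definitions, the only inputs being Claim \ref{cl:2.5} and the maximality built into Definition \ref{def:2.4}. First I would fix notation: by Definition \ref{def:2.6} every $G \in \partial_R^j F$ has the form $G = F \setminus D$, where $D \subset F \setminus T(F) = F \cap [t + 2h(F)]$ is a $j$-element set of deleted vertices. The key elementary observation, used twice, is that $D$ is contained in the initial segment $[t + 2h(F)]$, hence in $[t + 2h]$ for every $h \geq h(F)$; consequently for any such $h$ one has $G \cap [t + 2h] = (F \cap [t + 2h]) \setminus D$ with $D \subseteq F \cap [t+2h]$, so that $|G \cap [t + 2h]| = |F \cap [t + 2h]| - j$.

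For part (i) I would take $h = h(F)$ in the displayed identity, obtaining $|G \cap [t + 2h(F)]| = |F \cap [t + 2h(F)]| - j$. Since $h(F) < w_t(\mathcal F)$, Claim \ref{cl:2.5} gives $|F \cap [t + 2h(F)]| = t + h(F)$, and therefore $|G \cap [t + 2h(F)]| = t - j + h(F)$, as claimed.

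For part (ii) I would fix $h$ with $h(F) < h \leq w_t(\mathcal F)$. The maximality of $h(F)$ in Definition \ref{def:2.4} (applicable since $h \leq w_t(\mathcal F)$) forces $|F \cap [t + 2h]| \leq t + h - 1$. Because $h(F) < h$ we have $t + 2h(F) < t + 2h$, so $D \subseteq [t + 2h]$ and the identity above applies: $|G \cap [t + 2h]| = |F \cap [t + 2h]| - j \leq t + h - 1 - j < t - j + h$. This completes the argument.

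There is no genuine obstacle here; the one point requiring care is that all $j$ deleted vertices lie in $[t + 2h(F)]$ and hence in every larger initial segment $[t + 2h]$, which is exactly why one works with the \emph{restricted} shadow of Definition \ref{def:2.6} — the deletions are forbidden to touch the tail $T(F)$, so they can only decrease the intersection sizes with the relevant initial segments by precisely $j$.
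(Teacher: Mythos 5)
Your argument is correct and is exactly the intended (and, in the paper, unwritten) justification: since $D = F\setminus G$ has size $j$ and lies in $F\cap[t+2h(F)]\subseteq[t+2h]$ for every $h\ge h(F)$, one gets $|G\cap[t+2h]| = |F\cap[t+2h]|-j$, and then (i) follows from Claim~\ref{cl:2.5} while (ii) follows from the maximality in Definition~\ref{def:2.4}. The paper states this claim without proof, so there is nothing further to compare.
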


Applying this claim we infer

\setcounter{corollary}{7}
\begin{corollary}
\label{cor:2.8}
Suppose that $F, F' \in \mathcal F$, $h(F) < h(F')$.
Then
\beq
\label{eq:2.5}
\partial_R^j F \cap \partial_R^j F' = \emptyset.
\eeq
\end{corollary}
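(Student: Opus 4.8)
The plan is to argue by contradiction, playing off the two parts of Claim~\ref{cl:2.7} against the defining property of the height of $F'$. Suppose, for contradiction, that there is a set $G \in \partial_R^j F \cap \partial_R^j F'$. Since by hypothesis $h(F) < h(F') \le w_t(\mathcal F)$, in particular $h(F) < w_t(\mathcal F)$, so Claim~\ref{cl:2.7} is applicable to $F$.

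Next I would apply Claim~\ref{cl:2.7}(ii) to $F$ with the specific choice $h = h(F')$; this is legitimate precisely because $h(F) < h(F') \le w_t(\mathcal F)$. This yields the upper bound $\bigl|G \cap [t + 2h(F')]\bigr| < t - j + h(F')$. On the other hand, since $G \in \partial_R^j F'$, the set $G$ is obtained from $F'$ by deleting $j$ vertices lying in $F' \setminus T(F') = F' \cap [t + 2h(F')]$, so $G \cap [t + 2h(F')]$ is obtained from $F' \cap [t + 2h(F')]$ by deleting exactly $j$ of its elements. Using $\bigl|F' \cap [t + 2h(F')]\bigr| \ge t + h(F')$, which holds by the definition of $h(F')$ (Definition~\ref{def:2.4}), we get $\bigl|G \cap [t + 2h(F')]\bigr| \ge t + h(F') - j = t - j + h(F')$, contradicting the upper bound and finishing the argument.

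There is essentially no obstacle here: the only points requiring a moment's care are verifying that the hypothesis $h(F) < w_t(\mathcal F)$ of Claim~\ref{cl:2.7} is satisfied (it follows immediately from $h(F) < h(F')$), and that $h = h(F')$ is an admissible index in Claim~\ref{cl:2.7}(ii) (again immediate from $h(F) < h(F') \le w_t(\mathcal F)$). Note also that part~(i) of Claim~\ref{cl:2.7} is not needed; the disjointness already follows from part~(ii) alone.
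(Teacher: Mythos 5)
Your proof is correct, and it takes essentially the same route as the paper's: both compare $\bigl|G \cap [t + 2h(F')]\bigr|$ for the two restricted shadows, with the upper bound coming from Claim~\ref{cl:2.7}(ii) applied to $F$ at $h = h(F')$. The one small difference is a genuine cleanup: the paper derives the competing lower bound from Claim~\ref{cl:2.7}(i) applied to $F'$, whose hypothesis strictly speaking requires $h(F') < w_t(\mathcal F)$ and hence does not cover the case $h(F') = w_t(\mathcal F)$, whereas you read the bound $\bigl|G \cap [t + 2h(F')]\bigr| \ge t - j + h(F')$ directly off the definitions of $h(F')$ and $\partial_R^j F'$, which handles every case uniformly and, as you observe, makes part~(i) unnecessary.
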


\begin{proof}
Using (i) and (ii)
$$
\bigl|G \cap [t + 2h(F')]\bigr| < \bigl|G' \cap [t + 2h(F')]\bigr|
$$
follows for $G \in \partial_R^j F$ and $G' \in \partial_R^j F'$.
\end{proof}

Note that \eqref{eq:2.5} is immediate also if $h(F) = h(F')$ but $T(F) \neq T(F')$.
Define $\mathcal T = \bigl\{T \subset [n] : \exists F \in \mathcal F, T(F) = T\bigr\}$.
For $T \in \mathcal T$ define $\mathcal F_T = \{F \in \mathcal F : T(F) = T\}$ and $\overline{\mathcal F}_T = \bigl\{F \setminus T : F \in \mathcal F_T\bigr\}$.
This permits to define the restricted $j$'th shadow of $\mathcal F_T$:
$$
\partial_R^j \mathcal F_T = \bigcup_{F \in \mathcal F_T} \partial_R^j F.
$$

The next lemma is the core of the proofs.

\setcounter{lemma}{8}
\begin{lemma}
\label{lem:2.9}
Suppose that $\mathcal F$ is pseudo $t$-intersecting, $0 < j \leq t$.
Then
$\mathcal F = \bigcup\limits_{T \in \mathcal T} \mathcal F_T$ is a partition, and
\beq
\label{eq:2.6}
\bigl|\partial^j \mathcal F\bigr| \geq \sum_{T \in \mathcal T} \bigl|\partial_R^j \mathcal F_T\bigr|.
\eeq
\end{lemma}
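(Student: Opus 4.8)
The plan is to treat the two assertions separately, with essentially all of the work reduced to Corollary~\ref{cor:2.8} and the definitions. For the partition statement: since $\mathcal F$ is pseudo $t$-intersecting, every $F \in \mathcal F$ satisfies \eqref{eq:2.1} for some $h \leq w_t(\mathcal F)$, so the height $h(F)$ of Definition~\ref{def:2.4} is well defined, and hence so is the tail $T(F) = F \setminus [t + 2h(F)]$. Each $F$ therefore lies in exactly one class $\mathcal F_{T(F)}$, which is precisely the assertion that $\mathcal F = \bigcup_{T \in \mathcal T} \mathcal F_T$ is a partition. I would also record in passing that $\partial_R^j F \neq \emptyset$: one has $|F \cap [t + 2h(F)]| \geq t + h(F) \geq t \geq j$, so there are at least $j$ elements of $F$ outside $T(F)$ available to be deleted.

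For the inequality the key step is disjointness of the restricted shadows. First, $\partial_R^j \mathcal F_T \subseteq \partial^j \mathcal F$ for every $T \in \mathcal T$, since every $G \in \partial_R^j \mathcal F_T$ is a $(k-j)$-subset of some $F \in \mathcal F$. Thus it suffices to prove $\partial_R^j \mathcal F_T \cap \partial_R^j \mathcal F_{T'} = \emptyset$ whenever $T \neq T'$; granting this, $\bigcup_{T \in \mathcal T} \partial_R^j \mathcal F_T$ is a disjoint union contained in $\partial^j \mathcal F$, and \eqref{eq:2.6} follows by comparing cardinalities. To prove the disjointness, suppose some $G$ lies in both, say $G \in \partial_R^j F$ with $F \in \mathcal F_T$ and $G \in \partial_R^j F'$ with $F' \in \mathcal F_{T'}$. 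If $h(F) \neq h(F')$, say $h(F) < h(F')$ (so in particular $h(F) < w_t(\mathcal F)$), then Corollary~\ref{cor:2.8} gives $\partial_R^j F \cap \partial_R^j F' = \emptyset$, a contradiction. If instead $h(F) = h(F') =: h$, then since $G$ is obtained from $F$ by deleting $j$ elements of $F \setminus T(F) = F \cap [t + 2h] \subseteq [t + 2h]$, we get $G \setminus [t + 2h] = F \setminus [t + 2h] = T(F) = T$, and symmetrically $G \setminus [t + 2h] = T'$; hence $T = T'$, again a contradiction.

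The argument is short because the substantive content already sits in Claim~\ref{cl:2.7} and Corollary~\ref{cor:2.8}. The one point that must not be glossed over is that two members of $\mathcal F$ may share a tail while having different heights, so the case distinction above has to be carried out for the particular pair $(F, F')$ witnessing that $G \in \partial_R^j \mathcal F_T \cap \partial_R^j \mathcal F_{T'}$, rather than uniformly over the classes. Once this is respected, the hypothesis $h(F) < w_t(\mathcal F)$ needed to invoke Corollary~\ref{cor:2.8} comes for free, being forced by $h(F) < h(F') \leq w_t(\mathcal F)$, and I do not expect any further obstacle.
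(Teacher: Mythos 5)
Your proof is correct and follows essentially the same route as the paper: express $\bigcup_T \partial_R^j \mathcal F_T$ as a disjoint union inside $\partial^j\mathcal F$, invoking Corollary~\ref{cor:2.8} when heights differ and the identity $G\setminus[t+2h]=T$ when they agree. One small slip in your closing remark: two members of $\mathcal F$ in fact \emph{cannot} share a tail while having different heights --- if $h(F)<h(F')$ then $|T(F)|=k-t-h(F)>k-t-h(F')\geq|T(F')|$ by Claim~\ref{cl:2.5} and \eqref{eq:2.4}, so $T(F)\neq T(F')$; the case split is still genuinely needed (distinct tails do not force distinct heights), but the stated justification for it is not.
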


\begin{proof}
The first part is trivial.
To show the second one we need to prove for $T, T'\in \mathcal T$, $T \neq T'$,
$$
\partial_R^j \mathcal F_T \cap \partial_R^j \mathcal F_{T'} = \emptyset.
$$
This follows from \eqref{eq:2.5} unless both $F$ and $F'$ with $T(F) = T$ and $T(F') = T'$ satisfy $h(F) = h(F') = w = w_t(\mathcal F)$.
(Actually, by \eqref{eq:2.3} these are equivalent to $|T|, |T'| \leq k - t - w$.)
In this case $T = F \setminus [t + 2w]$, $T' = F' \setminus [t + 2w]$ imply $\partial_R^j \mathcal F_T \cap \partial_R^j \mathcal F_{T'} = \emptyset$.
\end{proof}

With this preparation the next theorem is easy to prove.

\setcounter{theorem}{9}
\begin{theorem}
\label{th:2.10}
Let $\mathcal F \subset {[n] \choose k}$ be a shifted pseudo $t$-intersecting of width $w = w_t(\mathcal F)$.
Then for every $0 < j \leq t$,
\beq
\label{eq:2.7}
\bigl|\partial_R^j \mathcal F\bigr| \geq |\mathcal F| {t + 2w\choose t - j + w} \biggm/ {t + 2w \choose t + w}.
\eeq
\end{theorem}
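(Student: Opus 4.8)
The plan is to decompose $\mathcal F$ along the partition $\mathcal F=\bigcup_{T\in\mathcal T}\mathcal F_T$ supplied by Lemma~\ref{lem:2.9} and to treat each block by the normalized matching (local LYM) inequality in a Boolean lattice. Since $\partial_R^j\mathcal F=\bigcup_{T\in\mathcal T}\partial_R^j\mathcal F_T$ and, as shown inside the proof of Lemma~\ref{lem:2.9}, the sets $\partial_R^j\mathcal F_T$ are pairwise disjoint, it suffices to prove for each fixed $T\in\mathcal T$ that $\bigl|\partial_R^j\mathcal F_T\bigr|\ge|\mathcal F_T|\binom{t+2w}{t-j+w}\big/\binom{t+2w}{t+w}$ and then sum, using $\sum_{T}|\mathcal F_T|=|\mathcal F|$. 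Writing $\overline F=F\setminus T$, the map $G\mapsto G\cup T$ is a bijection from $\partial^j\overline{\mathcal F}_T$ onto $\partial_R^j\mathcal F_T$, so I only need a lower bound on $\bigl|\partial^j\overline{\mathcal F}_T\bigr|$.

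First I would pin down which uniform family $\overline{\mathcal F}_T$ is. If every $F\in\mathcal F_T$ has $h(F)=w$, then $T=F\setminus[t+2w]$, so $\overline F=F\cap[t+2w]$ and $|\overline F|=k-|T|=:r$ with $r\ge t+w$ (since $h(F)=w$ forces $|F\cap[t+2w]|\ge t+w$); thus $\overline{\mathcal F}_T\subseteq\binom{[t+2w]}{r}$. Otherwise some $F\in\mathcal F_T$ has $h:=h(F)<w$, and then \eqref{eq:2.3}--\eqref{eq:2.4} give $|T|=k-t-h$; a short argument using $[t+2h(F')]\subseteq[t+2h]$ shows that every $F'\in\mathcal F_T$ then satisfies $h(F')=h$, $\overline{F'}=F'\cap[t+2h]$ and $|\overline{F'}|=t+h$, so $\overline{\mathcal F}_T\subseteq\binom{[t+2h]}{t+h}$. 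In either case $\overline{\mathcal F}_T$ is a uniform subfamily of some $\binom{[m]}{r}$.

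Now I would apply the standard fact: for every $\mathcal G\subseteq\binom{[m]}{r}$ one has $|\partial\mathcal G|(m-r+1)\ge r|\mathcal G|$ (double count the pairs $S\subset G$, $G\in\mathcal G$, $|G\setminus S|=1$), hence, iterating $j$ times, $\bigl|\partial^j\mathcal G\bigr|\ge|\mathcal G|\binom{m}{r-j}\big/\binom{m}{r}$. Taking $\mathcal G=\overline{\mathcal F}_T$, it remains to check that the ratio $\binom{m}{r-j}\big/\binom{m}{r}=\prod_{i=1}^{j}\frac{r-i+1}{m-r+i}$ attained by the block is at least $\binom{t+2w}{t-j+w}\big/\binom{t+2w}{t+w}$. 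For a first-type block ($m=t+2w,\ r\ge t+w$), comparing consecutive values shows the ratio is increasing in $r$, hence at least its value at $r=t+w$, which is exactly the target. For a second-type block the ratio equals $g(h):=\binom{t+2h}{t-j+h}\big/\binom{t+2h}{t+h}=\prod_{i=1}^{j}\frac{t+h+1-i}{h+i}$, and a telescoping computation gives $g(h+1)/g(h)=\frac{(t+h+1)(h+1)}{(t+h+1-j)(h+j+1)}$, which is $<1$ since $(t+h+1-j)(h+j+1)-(t+h+1)(h+1)=j(t-j)>0$ (here $t>j$ is used); thus $g$ is decreasing and $g(h)\ge g(w)$ for $h\le w$. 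Either way $\bigl|\partial_R^j\mathcal F_T\bigr|\ge|\mathcal F_T|\binom{t+2w}{t-j+w}\big/\binom{t+2w}{t+w}$, and summing over $T$ establishes \eqref{eq:2.7}.

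The main obstacle is the bookkeeping in the second paragraph: correctly identifying, for each block $\mathcal F_T$, the exact pair $(m,r)$ with $\overline{\mathcal F}_T\subseteq\binom{[m]}{r}$ — this is where \eqref{eq:2.3}--\eqref{eq:2.4} and the dichotomy ``all heights equal $w$'' versus ``all heights equal some common $h<w$'' are used — together with the two elementary monotonicities of $\binom{m}{r-j}\big/\binom{m}{r}$ that single out $(m,r)=(t+2w,t+w)$ as the worst case. The local LYM input is entirely standard.
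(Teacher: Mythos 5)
Your proof is correct and follows essentially the same route as the paper: Lemma~\ref{lem:2.9} partitions the restricted shadow into the blocks $\partial_R^j\mathcal F_T$, within each block $\overline{\mathcal F}_T$ is identified as a uniform subfamily of $\binom{[m]}{r}$ (your two types match the paper's dichotomy $|T|\ge k-t-w$ versus $|T|<k-t-w$), Sperner's bound \eqref{eq:1.1} is applied, and the two monotonicity computations you verify inline are exactly Proposition~\ref{prop:2.11}(i)--(ii).
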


\begin{proof}
Let $\mathcal T$ be the family of possible tails for $\mathcal F$.
In view of Lemma \ref{lem:2.9} it is sufficient to show
\beq
\label{eq:2.8}
\bigl|\partial_R^j \mathcal F_T\bigr| \geq |\mathcal F_T| {t + 2w\choose t - j + w} \biggm/ {t + 2w\choose t + w}.
\eeq
Recall that $\overline{\mathcal F}_T = \{F \setminus T : F \in \mathcal F_T\}$.
If $|T| \geq k - t - w$ then $\overline{\mathcal F}_T \subset {[t + 2(k - t - |T|)]\choose k - |T|}$ and
$\bigl|\partial_R^j \mathcal F_T\bigr| = \bigl|\partial^j \overline{\mathcal F}_T\bigr|$.

If $|T| < k - t - w$ then $\overline{\mathcal F}_T \subset {[t + 2w]\choose k - |T|}$ and again
$\bigl|\partial_R^j \mathcal F_T\bigr| = \bigl|\partial_j \overline{\mathcal F}_T\bigr|$.
In the first case $t + 2(k - t - |T|) = 2(k - |T|) - t$, showing that $\overline{\mathcal F}_T$ is $t$-intersecting.
In the second case $t + 2w > 2(k - |T|) - t$ by $w + |T| < k - t$, that is $\overline{\mathcal F}_T$ is $(t + 1)$-intersecting.
However, the desired bound readily follows using \eqref{eq:1.1} and the next proposition.

\setcounter{proposition}{10}
\begin{proposition}
\label{prop:2.11}
Let $0 < j < t$, $0 \leq h < w$ and $1 \leq r \leq w$, then the following two inequalities hold.

\hspace*{3.4pt}{\rm (i)} $\displaystyle{t + 2h\choose t + h - j} \biggm/ {t + 2h\choose t + h} > {t + 2w\choose t + w - j} \biggm/ {t + 2w\choose t + w}$,

\smallskip
{\rm (ii)} $\displaystyle{t + 2w\choose t + w - j + r} \biggm/ {t + 2w\choose t + w + r} > {t + 2w\choose t + w - j} \biggm/ {t + 2w\choose t + w}$.
\end{proposition}

\begin{proof}
Let $f(h)$ denote the LHS of (i).
That is, $f(h) = \prod\limits_{1 \leq i \leq j} \frac{t + h - j + i}{h + i} = \prod\limits_{1\leq i \leq j} \left(1 + \frac{t - j}{h + i}\right)$.
Since $1 + \frac{t - j}{h + i}$ is a strictly monotone decreasing function of $h$, \,$f(h) > f(w)$ follows.

To prove (ii) let $g(r)$ be the LHS, i.e.,
$$
g(r) = \prod_{1 \leq i \leq j} \frac{t - j + w + i + r}{w + i - r}.
$$
Since $\frac{a + r}{b - r}$ is a strictly monotone increasing function of $r$ (for $a > 0$, $b > r$), $g(r) > g(0)$ and thereby (ii) follows.
\end{proof}

This concludes the proof of Theorem \ref{th:2.10} as well.
\end{proof}

\section{The proof of Theorem \ref{th:1.4}}
\label{sec:3}

Let $\mathcal F \subset {[n] \choose k}$ be a shifted $t$-intersecting family, $t \geq 2$.
If $w_t(\mathcal F) \leq k - t - 1$ then for every $1 \leq j < t$, from Theorem \ref{th:2.10} we infer
$$
\bigl|\partial^j \mathcal F\bigr| \geq |\mathcal F| {t + 2(k - t - 1)\choose k - 1 - j} \biggm/ {t + 2(k - t - 1)\choose k - 1}
$$
proving \eqref{eq:1.3}.

From now on we suppose $w_t(\mathcal F) = k - t$ and fix an $A = (a_1,\dots, a_k) \in \mathcal F$ such that
\beq
\label{eq:3.1}
\bigl|A \cap [t + 2h]\bigr| \leq t + h - 1 \ \ \ \text{ for } \ \ \ 0 \leq h < k - t.
\eeq
Applying \eqref{eq:2.1} to $A$ yields $\bigl|A \cap [t + 2(k - t)]\bigr| = k$, i.e., $A \in {[2k - t]\choose k}$.
Our plan for proving \eqref{eq:1.3} is the following.
We partition $\mathcal F$ into two families $\mathcal F_{\text{in}}$ and $\mathcal F_{\text{out}}$ where
$\mathcal F_{\text{in}} = \mathcal F \cap {[2k - t]\choose k}$, $\mathcal F_{\text{out}} = \mathcal F \setminus \mathcal F_{\text{in}}$.
Then we show that
\beq
\label{eq:3.2}
\partial^j \mathcal F_{\text{in}} \cap \partial_R^j \mathcal F_{\text{out}} = \emptyset
\eeq
and thereby
\beq
\label{eq:3.3}
\bigl|\partial^j \mathcal F| \geq \bigl|\partial^j \mathcal F_{\text{in}}\bigr| + \bigl|\partial_R^j \mathcal F_{\text{out}}\bigr|.
\eeq

For the first term on the RHS we use \eqref{eq:1.2} with $\ell = k - j$.
As for the second, we prove a stronger inequality
\beq
\label{eq:3.4}
\bigl|\partial_R^j \mathcal F_{\text{out}}\bigr| \geq \bigl|\mathcal F_{\text{out}}\bigr| {t + 1 + 2(k - t - 2)\choose k - 1 - j}\biggm/ {t + 1 + 2(k - t - 2)\choose k - 1}.
\eeq
Defining $\alpha = \alpha(k, t, j)$ and $\beta = \beta(k, t, j)$ by
$$
\alpha = \frac{{t + 2(k - t - 1)\choose k - 1 - j}}{{t + 2(k - t - 1)\choose k - 1}} - \frac{{t + 2 (k - t)\choose k - j}}{{t + 2(k - t)\choose k}}, \ \ \ \
\beta = \frac{{t + 1 + 2(k - t - 2)\choose k - 1 - j}}{{t + 1 + 2(k - t - 2)\choose k - 1}} - \frac{{t + 2 (k - t)\choose k - j}}
{{t + 2(k - t)\choose k}},
$$
\eqref{eq:3.3} and \eqref{eq:3.4} imply
\beq
\label{eq:3.5}
\bigl|\partial^j \mathcal F\bigr| \geq \left(\bigl|\mathcal F_{\text{in}}\bigr| + \bigl|\mathcal F_{\text{out}}\bigr|\right)
\frac{{t + 2(k - t)\choose k - j}}{{t + 2(k - t)\choose k}} + \beta \bigl|\mathcal F_{\text{out}}\bigr|.
\eeq

Finally we show that the assumption on $|\mathcal F|$ implies
$$
\bigl|\mathcal F_{\text{out}}\bigr| \geq |\mathcal F| - {2k - t\choose k} \geq \frac{\alpha}{\beta} |\mathcal F|.
$$
Plugging this into \eqref{eq:3.5} yields
$$
\bigl|\partial^j \mathcal F\bigr| \geq \left(\frac{{t + 2(k - t)\choose k - j}}{{t + 2(k - t)\choose k}} + \alpha\right)
|\mathcal F| = \frac{{t + 2(k - t - 1)\choose k - 1 - j}}{{t + 2(k - t - 1)\choose k - 1}} |\mathcal F|,\ \  \text{ as desired.}
$$

Let us now execute this plan.
\eqref{eq:3.2} is essentially trivial.
If $G \in \partial^j \mathcal F_{\text{in}}$ then $G \subset [2k - t]$.
For $F \in \mathcal F_{\text{out}}$, $F \not\subset [2k - t]$ and the definition of the restricted shadow imply that $G'\not\subset [2k - t]$ for each $G' \in \partial_R^j F$.
Thus $G \neq G'$.

To prove \eqref{eq:3.4} let us show:

\begin{proposition}
\label{prop:3.1}
The family $\mathcal F_{\text{\rm out}}$ is pseudo $(t + 1)$-intersecting and $w_{t + 1}(\mathcal F_{\text{\rm out}}) \leq k - t - 2$.
\end{proposition}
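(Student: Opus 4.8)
The plan is to deduce both assertions from a single claim: since $w_{t+1}(\mathcal F_{\text{out}}) \le k-t-2$ already forces $\mathcal F_{\text{out}}$ to be pseudo $(t+1)$-intersecting (the latter only requires a valid $h$ with $0 \le h \le k-t-1$), it suffices to show that every $F = (f_1,\dots,f_k) \in \mathcal F_{\text{out}}$ has $\bigl|F \cap [t+1+2h]\bigr| \ge t+1+h$ for some $h$ with $0 \le h \le k-t-2$. I would argue by contradiction, assuming some such $F$ satisfies $\bigl|F \cap [t+1+2h]\bigr| \le t+h$ for all $0 \le h \le k-t-2$, and derive a violation of the $t$-intersecting property of $\mathcal F$ via the extremal $A = (a_1,\dots,a_k)$ of \eqref{eq:3.1}. (If $k-t \le 1$ then $\mathcal F_{\text{out}}=\emptyset$ and there is nothing to prove, so I assume $k-t \ge 2$.)

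The assumption pins down the shape of $F$: it gives $f_j \ge 2j-t$ for $t+1 \le j \le k-1$; and setting $m := \bigl|F \cap [2k-t]\bigr|$, one has $m \le k-1$ since $F \in \mathcal F_{\text{out}}$, and $m \ge t$ since $A \subset [2k-t]$ forces $t \le \bigl|F \cap A\bigr| \le \bigl|F \cap [2k-t]\bigr|$; thus $t \le m \le k-1$ and $F \cap [2k-t] = \{f_1,\dots,f_m\}$. Now put $R = \{f_t, f_{t+1}, \dots, f_m\}$ and let $G$ be the set of the $k$ smallest elements of $[2k-t]\setminus R$ (which has $2k-m-1 \ge k$ elements). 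Then $G \subset [2k-t]$ and $G \cap F \subseteq \{f_1,\dots,f_{t-1}\}$, so $|G \cap F| \le t-1 < t$; hence it suffices to prove $G \prec A$, since then $G \in \mathcal F$ by shiftedness, contradicting the $t$-intersecting property.

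For $G \prec A$ the key observation is that enlarging the deleted set only pushes the elements of $[2k-t]\setminus R$ downward, so it is enough to treat the worst case: because $f_t \ge t$ and $f_j \ge 2j-t$ for $j>t$, the increasing enumeration of $R$ dominates that of the extreme tail $R^\ast = \{t, t+2, \dots, 2m-t\}$, whence the $i$-th element of $G$ is at most the $i$-th element $g^\ast_i$ of $G^\ast$, the set of $k$ smallest elements of $[2k-t]\setminus R^\ast$. A direct computation gives $G^\ast = \{1,\dots,t-1\}\cup\{t+1,t+3,\dots,2m-t-1\}\cup\{2m-t+1,\dots,k+m-t+1\}$, so $g^\ast_i = i$ for $1 \le i \le t-1$, $\;g^\ast_i = 2i-t+1$ for $t \le i \le m-1$, and $g^\ast_i = i+m-t+1$ for $m \le i \le k$. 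Comparing with $a_i \ge i$ (trivially) and with $a_{t+h} \ge t+2h+1$ $(0 \le h \le k-t-1)$, $a_k = 2k-t$ from \eqref{eq:3.1}, and using $m \le k-1$, one checks $g^\ast_i \le a_i$ in each of the three ranges; thus $G \prec A$.

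The main obstacle is precisely this verification $G \prec A$: the inequalities $a_{t+h}\ge t+2h+1$ of \eqref{eq:3.1} are tight, so the comparison must be carried out against the worst possible tail $R=R^\ast$, and the top range $m \le i \le k$ — where $g^\ast_i = i+m-t+1$ and one invokes $a_{t+h}\ge t+2h+1$ for $i \le k-1$ but must fall back on $a_k = 2k-t$ and $m \le k-1$ at $i=k$ — has to be split off from the middle range $t \le i \le m-1$. A secondary delicate step is locating $F\cap[2k-t] = \{f_1,\dots,f_m\}$ with $t \le m \le k-1$, which combines the membership $F\in\mathcal F_{\text{out}}$ with the $t$-intersection $|F\cap A|\ge t$.
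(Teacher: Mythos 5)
Your proof is correct, but it follows a genuinely different route from the paper's. The paper fixes two explicit sets
$E = (1, \dots, t - 1, t + 1, t + 3, \dots, 2k - t - 1, 2k - t)$ and
$D = (1, \dots, t, t + 2, \dots, 2k - t - 2, 2k - t + 1)$,
shows $E \prec A$ (hence $E \in \mathcal F$), observes $|E \cap D| = t - 1$ so $D \notin \mathcal F$, and then argues that shiftedness forbids $D \prec B$ for any $B \in \mathcal F_{\text{out}}$; reading off the coordinate where $D \not\prec B$ fails (after handling the first $t$ and the last coordinate directly) yields exactly the desired bound $|B \cap [t + 1 + 2g]| \geq t + 1 + g$ with $g \leq k - t - 2$. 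In contrast, you argue by contradiction, building \emph{for each} bad $F \in \mathcal F_{\text{out}}$ a tailored $k$-set $G \subset [2k - t]$ that dodges $F$ (so $|G \cap F| \leq t - 1$) yet satisfies $G \prec A$ (so $G \in \mathcal F$), violating $t$-intersection. Your comparison against a worst-case tail $R^\ast$ and $G^\ast$ is just the instance of the paper's $E$ implicitly recomputed for each $m = |F \cap [2k - t]|$; indeed, when $m = k - 1$ your $G^\ast$ equals the paper's $E$. The paper's route is shorter and avoids the technical step you correctly flag as the main obstacle — namely, that $R \succ R^\ast$ forces the complement enumeration to drop coordinatewise, which requires the monotonicity observation about $|R \cap [x]|$ versus $|R^\ast \cap [x]|$ — while your route has the small advantage of producing, for each offending $F$, an explicit member $G$ of $\mathcal F$ witnessing the failure of $t$-intersection rather than invoking the exclusion of an abstract $D$.
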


\begin{proof}
Define the two sets $E$ and $D$ as follows.
\begin{align*}
E &= (1, 2, \dots, t - 1, t + 1, t + 3, \dots, 2k - t - 3, 2k - t - 1, 2k - t),\\
D &= (1, 2, \dots, t, t + 2, t + 4, \dots, 2k - t - 2, 2k - t + 1).
\end{align*}
Note that $E \cap D = [t - 1]$.

Let us show that $E \prec A$, implying $E \in \mathcal F$.
$i \leq a_i$ is trivial for $1 \leq i < t$.
As to $a_{t + h}$, $0 \leq h < k - t$, \eqref{eq:3.1} implies $t + 2h + 1 \leq a_{t + h}$.
Finally, using this inequality for $h = k - t - 1$ gives $2k - t - 1 \leq a_{k - 1}$ implying $2k - t \leq a_k$.
By shiftedness $E \in \mathcal F$.
On the other hand the $t$-intersecting property and $|D \cap E| = t - 1$ imply $D \notin \mathcal F$.

Choose an arbitrary $B = (b_1, \dots, b_k) \in \mathcal F_{\text{out}}$.
As $\mathcal F$ is shifted, $D \prec B$ cannot hold.
Note that for $1 \leq i \leq t$, $i \leq b_i$.
Also, $B \notin \mathcal F_{\text{in}}$ implies $2k - t + 1 \leq b_k$.
Therefore there exists a $g$, $0 \leq g \leq k - t - 2$ such that $b_{t + 1 + g}$ is strictly smaller than the corresponding element of~$D$.
That is,
$$
b_{t + 1 + g} \leq t + 1 + 2g.
$$
Equivalently
$$
\bigl|B \cap [t + 1 + 2g]\bigr| \geq t + 1 + g
$$
proving the pseudo $(t + 1)$-intersecting property.
Also, $g \leq k - t - 2$ implies $w_{t + 1}(\mathcal F_{\text{out}}) \leq k - t - 2$ as well.
\end{proof}

Now \eqref{eq:3.4} follows by applying Theorem \ref{th:2.10} with $t$ replaced by $t + 1$.

Let us compute $\alpha$ and $\beta$.
$$
\frac{{2k - t - 2\choose k - 1 - j}}{{2k - t - 2\choose k - 1}} \Bigg/ \frac{{2k - t\choose k - j}}{{2k - t\choose k}} = \frac{(k - j)(k - t + j)}{k (k - t)} = 1 + \frac{j(t - j)}{k(k - t)}.
$$
Thus
\begin{align}
\label{eq:3.6}
\alpha &= \frac{j(t - j)}{k(k - t)} \cdot \frac{{2k - t\choose k - j}}{{2k - t\choose k}}.\\
& \frac{{2k - t - 3\choose k - 1 - j}}{{2k - t - 3\choose k - 1}} \Biggm/ \frac{{2k - t\choose k - j}}{{2k - t\choose k}}
=
\frac{(k - j)(k - t + j)(k - t + j - 1)}{k(k - t)(k - t - 1)}\nonumber\\
&= 1 + \frac{j(k^2 - t^2 - t) - j^2(k - 2t - 1) - j^3}{k(k - t)(k - t - 1)}.
\nonumber
\end{align}
Thus
$$
\beta = \frac{j(k^2 - t^2 - t) - j^2(k - 2t - 1) - j^3}{k(k - t)(k - t - 1)} \cdot \frac{{2k - t\choose k - j}}{{2k - t\choose k}}.
$$
Consequently,
\begin{align*}
\frac{\beta}{\alpha} &= \frac{k^2 - t^2 - t - j(k - 2t + 1) - j^2}{(t - j)(k - t - 1)}\\
&= \frac{k + t + 1 - j}{t - j} + \frac{t + 1 + (t - j)j}{(t - j)(k - t - 1)} > \frac{k + t + 1 - j}{t - j}.
\end{align*}
We proved
$$
\frac{\alpha}{\beta} < \frac{t - j}{k + t + 1 - j}.
$$
On the other hand the assumption of Theorem \ref{th:1.4} was
$$
|\mathcal F| \geq {2k - t\choose k} \left(1 + \frac{t - j}{k + t + 1 - j}\right)
$$
implying
$$
\bigl|\mathcal F_{\text{out}}\bigr|\bigm/ |\mathcal F| \geq \frac{t - j}{k + t + 1 - j} > \frac{\alpha}{\beta},
$$
concluding the proof. \hfill $\square$

\section{The proof of Proposition \ref{prop:1.6}}
\label{sec:4}

First of all note that
$$
\left|{[2k - t]\choose k}\setminus |\mathcal A|\right| = \sum_{0 \leq i < t + s} {k - 1 + s\choose i} {k + 1 - s - t\choose k - i} = o\left({2k - t\choose k}\right)
$$
for fixed $s, t$ as $k \to \infty$.

Let us compute the size of $\partial^j \mathcal B \setminus {[2k - t]\choose k - j}$.
For a fixed $x \in [2k - t + 1, n]$, $\{x\} \cup B_0 \in \mathcal B$ iff
$B_0 \in {[k - 1 + s]\choose k - 1}$.
Thus the sets $D \in \left(\partial^j B\setminus {[2k - t]\choose k}\right)$ are of the form $\{x\} \cup D_0$ with $D_0 \in {[k - 1 + s]\choose k - 1 - j}$.
Thus
$$
\bigl|\partial^j \mathcal F\bigr| \leq {2k - t\choose k - j} + (n - 2k + t) {k - 1 + s \choose s + j}.
$$
Comparing this with
$$
|\mathcal F| = (1 - o(1)) {2k - t\choose k} + (n - 2k + t) {k - 1 + s\choose k - 1}
$$
and recalling the definition of $\alpha$ (cf.\ Section~\ref{sec:3}), we see that
$\bigl|\partial^j \mathcal F\bigr| / |\mathcal F| < (1 + \alpha) {2k - t\choose k - t}$ as long as
$$
|\mathcal B| < \frac{\alpha{k - 1 + s\choose s}}{{k - 1 + s\choose s + j}} {2k - t\choose k}(1 - o(1)).
$$
Noting ${k - 1 + s\choose s} \bigm/ {k - 1 + s\choose s + j} = \prod\limits_{0 \leq i < j} \frac{k - 1 - i}{s - i} < \frac{(k - 1)^j}{s(s - 1)\cdot \ldots \cdot (s - j + 1)}$ and
$\alpha > \frac{j(t - j)}{k(k - t)}$ we see that
$$
|\mathcal B| < \frac{j(t - j) s(s - 1)\cdot \ldots \cdot (s - j + 1)}{(k - 1)^{j + 2}} {2k - t\choose k} (1 - o(1))
$$
is fine.
Setting $\varepsilon(k) = \frac{j(t - j)s(s - 1) \cdot \ldots \cdot (s - j + 1)}{(k - 1)^{j + 2}}$ we get
$|\mathcal F| = (1 + \varepsilon(k) - o(1)) {2k - t\choose k}$.

\hfill $\square$

\section{The shadow of stars and semistars}
\label{sec:5}

The most important result concerning intersecting families is the Erd\H{o}s--Ko--Rado Theorem.

\begin{theorem}[\cite{EKR}]
\label{th:5.1}
Suppose that $n \geq n_0(k, t)$, $\mathcal F \subset {[n]\choose k}$ is $t$-intersecting, $k > t > 0$.
Then
\beq
\label{eq:5.1}
|\mathcal F| \leq {n - t\choose k - t}.
\eeq
\end{theorem}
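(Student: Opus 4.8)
The plan is to obtain Theorem \ref{th:5.1} quickly from the Ahlswede--Khachatrian Theorem, that is, from inequality \eqref{eq:2.2}, by verifying that for large $n$ the family $\mathcal A_0(n, k, t)$ is the largest one among the candidates $\mathcal A_h(n, k, t)$, $0 \leq h \leq k - t$. Since \eqref{eq:2.2} holds for every $t$-intersecting $\mathcal F \subseteq {[n]\choose k}$ as soon as $n \geq 2k - t$, no shifting or other preprocessing of $\mathcal F$ is required.

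The first step is the identification $\mathcal A_0(n, k, t) = \{A \in {[n]\choose k} : |A \cap [t]| \geq t\} = \{A \in {[n]\choose k} : [t] \subseteq A\}$, which gives $|\mathcal A_0(n, k, t)| = {n - t\choose k - t}$ --- exactly the right-hand side of \eqref{eq:5.1}. Hence it suffices to pick $n_0(k, t) \geq 2k - t$ large enough that $|\mathcal A_h(n, k, t)| < {n - t\choose k - t}$ for all $h$ with $1 \leq h \leq k - t$ and all $n \geq n_0(k, t)$; then \eqref{eq:2.2} yields $|\mathcal F| \leq \max_{0 \leq h \leq k - t} |\mathcal A_h(n, k, t)| = {n - t\choose k - t}$.

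The only estimate needed is $|\mathcal A_h(n, k, t)| = o\!\left({n - t\choose k - t}\right)$ as $n \to \infty$, for each fixed $h$ with $1 \leq h \leq k - t$. From
$$
|\mathcal A_h(n, k, t)| = \sum_{t + h \leq i \leq \min(t + 2h, k)} {t + 2h\choose i}{n - t - 2h\choose k - i},
$$
with $k, t, h$ fixed, each summand is a polynomial in $n$ of degree $k - i \leq k - t - h \leq k - t - 1$, so $|\mathcal A_h(n, k, t)| = O\!\left(n^{k - t - 1}\right)$, whereas ${n - t\choose k - t} = \Theta\!\left(n^{k - t}\right)$. There being only finitely many admissible values of $h$, a single threshold $n_0(k, t)$ handles all of them, and the theorem follows.

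On this route I do not anticipate a genuine obstacle: the whole weight of the argument is carried by \eqref{eq:2.2}, which we may assume, and the comparison of the $\mathcal A_h$ is a plain degree count. If instead one wanted a proof avoiding the Ahlswede--Khachatrian machinery, the natural alternative is to reduce to a shifted $\mathcal F$ (shifting alters neither $|\mathcal F|$ nor the $t$-intersecting property), use Proposition \ref{prop:2.2} to split $\mathcal F$ by the height $h(F)$, and bound the number of members of height $h$ by ${t + 2h\choose t + h}{n - t - 2h\choose k - t - h}$. This reproduces the same polynomial-degree phenomenon but gives only $|\mathcal F| \leq (1 + o(1)){n - t\choose k - t}$; upgrading it to the exact bound ${n - t\choose k - t}$ would require an explicit injection of the part of $\mathcal F$ consisting of sets not containing $[t]$ into the members of ${[n]\choose k} \setminus \mathcal F$ that do contain $[t]$, and producing that injection is the only delicate point of the elementary approach --- which is why I would prefer to rely on \eqref{eq:2.2}.
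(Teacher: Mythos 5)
The paper does not reprove Theorem \ref{th:5.1} --- it is cited as a classical result, with the text after the statement recalling that the exact threshold is $n_0(k,t)=(k-t+1)(t+1)$, that $t=1$ is EKR's own paper, $t\geq 15$ is due to Frankl \cite{F78}, and the general case follows from Wilson's eigenvalue argument \cite{W}. Your derivation from the Ahlswede--Khachatrian theorem (the paper's \eqref{eq:2.2}) is logically correct: $\mathcal A_0(n,k,t)$ is the $t$-star of size $\binom{n-t}{k-t}$, and for $1\leq h\leq k-t$ the expansion $|\mathcal A_h(n,k,t)|=\sum_{i\geq t+h}\binom{t+2h}{i}\binom{n-t-2h}{k-i}$ has every term of degree at most $k-t-h\leq k-t-1$ in $n$, so $|\mathcal A_h|=O(n^{k-t-1})=o\bigl(\binom{n-t}{k-t}\bigr)$; with finitely many $h$ a uniform $n_0(k,t)$ exists, and \eqref{eq:2.2} then gives the bound. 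The argument is not circular, since the Ahlswede--Khachatrian proof is self-contained, so the deduction stands. That said, it is the reverse of the standard logical (and historical) order: you are deducing the simpler statement from the much deeper complete intersection theorem, and in exchange you forfeit any control over $n_0(k,t)$ --- your route gives an unspecified threshold, whereas the cited proofs establish the sharp value $(k-t+1)(t+1)$. Your closing remarks about the elementary alternative (shifting plus Proposition \ref{prop:2.2}) are accurate: that route delivers only $(1+o(1))\binom{n-t}{k-t}$ without the further compression/injection arguments that Frankl's and Wilson's proofs supply. In short: correct, but a heavy black box replacing the direct proofs the paper points to, and weaker on the threshold.
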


As to the bound $n_0(k,t)$, its exact value is $(k - t + 1)(t + 1)$.
For $t = 1$ it was proved already by Erd\H{o}s, Ko and Rado.
For $t \geq 15$ it was proved by the first author (\cite{F78}).
Finally Wilson \cite{W} showed it by a proof using eigenvalues for $2 \leq t \leq 14$ (the proof is valid for all $t$).

The \emph{full $t$-star}, $\mathcal A_0(n, k, t) = \left\{A \in {[n]\choose k} : [t] \subset A\right\}$ shows that \eqref{eq:5.1} is best possible.
Let us note that for $n = (k - t + 1)(t + 1)$, $\bigl|\mathcal A_0(n, k, t)\bigr| = \bigl|\mathcal A_1(n, k, t)\bigr|$ and for $t \geq 2$ up to isomorphism these are the only families achieving equality in \eqref{eq:5.1}.

Let us mention that the Intersecting Shadow Theorem implies $|\mathcal F| \leq \bigl|\partial^t\mathcal F| \leq {n\choose k - t}$ for all $n \geq 2k - t$.
Very recently the first author \cite{F20} showed the slightly stronger universal bound
\beq
\label{eq:5.2}
|\mathcal F| \leq {n - 1\choose k - t}\ \ \ \text{ for all }\ \ n > 2k - t, \ \ \ \mathcal F \ \text{ is $t$-intersecting}.
\eeq

\setcounter{definition}{1}
\begin{definition}
\label{def:5.2}
If $C \subset F$ holds for all $F \in \mathcal F$ with a $t$-set $C$ then $\mathcal F$ is called a \emph{$t$-star}.
If for some $(t + 1)$-element set $D$, $|F \cap D| \geq t$ holds for all $F \in \mathcal F$ then $\mathcal F$ is called a \emph{$t + 1$-semistar}.
When the value of $t$ is clear from the context, we say for short that $\mathcal F$ is a \emph{star} or \emph{semistar}.
\end{definition}

Let us note that the family $\mathcal A_0(n, k, t) \cup \mathcal A_1(n, k, t)$ is a semistar with $D = [t + 1]$.

Let us fix $n, k, t$, $t \geq 2$ and use the shorthand notation $\mathcal A_0$, $\mathcal A_1$.

\setcounter{proposition}{2}
\begin{proposition}
\label{prop:5.3}
If $\emptyset \neq \mathcal F \subset \mathcal A_0 \cup \mathcal A_1$ then
\beq
\label{eq:5.3}
\bigl|\partial^j \mathcal F\bigr| / |\mathcal F| \geq {t + 2\choose j + 1} \bigm/(t + 2) \ \ \text{ for } \ 1 < j < t.
\eeq
If $\emptyset \neq \mathcal F \subset \mathcal A_0$ then
\beq
\label{eq:5.4}
\bigl|\partial^j \mathcal F\bigr| \bigm/ |\mathcal F| \geq \bigl|\partial^j \mathcal A_0\bigr|\bigm/ |\mathcal A_0| > {t\choose j}.
\eeq
\end{proposition}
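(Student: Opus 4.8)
The plan is to prove the two inequalities of the proposition separately: \eqref{eq:5.3} by a width argument that reduces it to Theorem~\ref{th:2.10}, and \eqref{eq:5.4} by decomposing the $j$'th shadow of a subfamily of $\mathcal A_0$ along the coordinates of $[t]$ and then invoking Sperner's Proposition~\ref{prop:1.1}.

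For \eqref{eq:5.3}, I would begin from the observation that every member of $\mathcal A_0 \cup \mathcal A_1$ satisfies \eqref{eq:2.1} with $h \leq 1$ (since $|F \cap [t]| = t$ for $F \in \mathcal A_0$, and $|F \cap [t + 2]| \geq t + 1$ for $F \in \mathcal A_1$), so any $\emptyset \neq \mathcal F \subset \mathcal A_0 \cup \mathcal A_1$ is pseudo $t$-intersecting with $w := w_t(\mathcal F) \in \{0, 1\}$. The families $\mathcal A_0, \mathcal A_1$ being shifted, so is their union, and hence the shifts that make $\mathcal F$ shifted keep it inside $\mathcal A_0 \cup \mathcal A_1$ while preserving $|\mathcal F|$ and not increasing $|\partial^j \mathcal F|$; so I may assume $\mathcal F$ is shifted and apply Theorem~\ref{th:2.10}, getting $|\partial^j \mathcal F| \geq |\partial_R^j \mathcal F| \geq |\mathcal F|\,{t + 2w \choose t + w - j}\bigm/{t + 2w \choose t + w}$. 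When $w = 1$ the right factor is ${t + 2 \choose t + 1 - j}\bigm/{t + 2 \choose t + 1} = {t + 2 \choose j + 1}\bigm/(t + 2)$, which is exactly \eqref{eq:5.3}. When $w = 0$ it is ${t \choose j}$, and the elementary identity $(j + 1){t \choose j} - {t + 1 \choose j} = (t - j){t \choose j - 1}$ (recall $1 < j < t$) gives ${t \choose j} \geq {t + 1 \choose j}/(j + 1) = {t + 2 \choose j + 1}/(t + 2)$, so \eqref{eq:5.3} holds in this case as well.

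For \eqref{eq:5.4}, I would write each $F \in \mathcal F \subset \mathcal A_0$ as $[t] \cup (F \setminus [t])$, set $\mathcal G = \{F \setminus [t] : F \in \mathcal F\} \subset {[t + 1, n] \choose k - t}$ (so $|\mathcal G| = |\mathcal F|$), and let $\mathcal G_0 := {[t + 1, n] \choose k - t}$ be the image of $\mathcal A_0$. The key step is the identity $|\partial^j \mathcal F| = \sum_b {t \choose j - b}\,|\partial^b \mathcal G|$, summed over $0 \leq b \leq \min(j, k - t)$ with $\partial^0 \mathcal G := \mathcal G$: one sorts $H \in \partial^j \mathcal F$ by $a := t - |H \cap [t]|$, checks that then $H \cap [t] \in {[t] \choose t - a}$ and $H \cap [t + 1, n] \in \partial^{j - a} \mathcal G$ (and conversely), and uses that $H \mapsto (H \cap [t],\, H \cap [t + 1, n])$ is injective; writing $b = j - a$ gives the identity, and the identical relation holds with $\mathcal G$ replaced by $\mathcal G_0$. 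Dividing by $|\mathcal F| = |\mathcal G|$ and applying Sperner's Proposition~\ref{prop:1.1} inside ${[t + 1, n] \choose k - t}$ termwise (so $|\partial^b \mathcal G|/|\mathcal G| \geq |\partial^b \mathcal G_0|/|\mathcal G_0|$ for $b \geq 1$, with equality for $b = 0$) then yields $|\partial^j \mathcal F|/|\mathcal F| \geq |\partial^j \mathcal A_0|/|\mathcal A_0|$. Finally, in the expansion of $|\partial^j \mathcal A_0|/|\mathcal A_0|$ the $b = 0$ term equals ${t \choose j}$ while the $b = 1$ term ${t \choose j - 1}{n - t \choose k - t - 1}\bigm/{n - t \choose k - t}$ is strictly positive, so $|\partial^j \mathcal A_0|/|\mathcal A_0| > {t \choose j}$, completing \eqref{eq:5.4}.

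The step I expect to be the main obstacle is nailing down the shadow identity behind \eqref{eq:5.4}: one must verify that classifying $H \in \partial^j \mathcal F$ by $|H \cap [t]|$ really does partition $\partial^j \mathcal F$ and captures \emph{all} of it — in particular that deletions confined to $[t + 1, n]$ yield nothing beyond the $b$-indexed pieces — and one should keep an eye on the summation range once $j$ may exceed $k - t$. Everything else rests on results already in hand (Sperner's bound and Theorem~\ref{th:2.10}), and the leftover binomial comparisons are routine.
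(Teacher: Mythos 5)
Your proof is correct and takes essentially the same route as the paper: for \eqref{eq:5.3} you observe $w_t(\mathcal F)\le 1$ and invoke Theorem~\ref{th:2.10} (handling $w=0$ explicitly, which the paper leaves implicit via Proposition~\ref{prop:2.11}(i)), and for \eqref{eq:5.4} your decomposition of $\partial^j\mathcal F$ according to $|H\cap[t]|$ is exactly the paper's Claim~\ref{cl:5.4}, after which Sperner's bound \eqref{eq:1.1} applied termwise finishes the argument just as in the paper.
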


\begin{proof}
To prove \eqref{eq:5.3} just note that $w_t(\mathcal F) \leq w_t(\mathcal A_0 \cup \mathcal A_1) = 1$.
Now the inequality follows from Theorem \ref{th:2.10}.

To prove \eqref{eq:5.4} we are going to use Proposition \ref{prop:1.1}.

Set $\overline{\mathcal F} = \{F \setminus [t] ; F \in \mathcal F\}$.
Since $\mathcal F \subset \mathcal A_0$, $|\overline{\mathcal F}| = |\mathcal F|$.
For convenience let us introduce the notation $\partial^0\overline{\mathcal F} = \overline{\mathcal F}$, $\partial^1\overline{\mathcal F} = \partial \overline{\mathcal F}$.

\setcounter{claim}{3}
\begin{claim}
\label{cl:5.4}
\beq
\label{eq:5.5}
\bigl|\partial^j \mathcal F\bigr| = \sum_{0 \leq i \leq j} {t\choose j - i} \bigl|\partial^i \overline{\mathcal F}\bigr|.
\eeq
\end{claim}

\begin{proof}
For $0 \leq i \leq j$ define
$$
\mathcal H_i = \bigl\{H \in \partial^j\mathcal F : |H\cap [t]| = i\bigr\}.
$$
That is, $\mathcal H_i$ consists of the $j$'th shadows where we omit $j - i$ elements from $[t]$ and $i$ elements from $F\setminus [t]$.
Then $\bigl|\mathcal H_i\bigr| = {t \choose j - i} \bigl|\partial^i \overline{\mathcal F}\bigr|$.
Since $\partial^j \mathcal F = \mathcal H_0 \sqcup \ldots \sqcup \mathcal H_j$ is a partition, \eqref{eq:5.5} follows.
\end{proof}

Applying \eqref{eq:1.1} to $\overline{\mathcal F}$ and using \eqref{eq:5.5} we infer
\beq
\label{eq:5.6}
\bigl|\partial^j \mathcal F\bigr| \bigm/ |\mathcal F| \geq \sum_{0 \leq i \leq j} {t\choose j - i} {n - t\choose k - t - i} \biggm/ {n - t\choose k - t}.
\eeq

For the family $\mathcal A_0$, $\overline{\mathcal A_0} = {[t + 1, n]\choose k - t}$.
Thus $\bigl|\partial^i \overline{\mathcal A}_0\bigr| = {n - t\choose k - t - i}$.
Consequently, $\bigl|\partial^j \mathcal A_0\bigr| \bigm/ \bigl|\mathcal A_0\bigr| = \sum\limits_{0 \leq i \leq j} {t\choose j - i} {n - t\choose k - t - i} \Bigm/ {n - t\choose k - t}$.
Comparing with \eqref{eq:5.6} the inequality \eqref{eq:5.4} follows.
\end{proof}

The main result of the present section is the following.

\setcounter{theorem}{4}
\begin{theorem}
\label{th:5.5}
Suppose that $\mathcal F \subset {[n]\choose k}$ is a $t$-intersecting $(t + 1)$-semistar.
Then for all $1 < j < t$, \eqref{eq:5.3} holds.
\end{theorem}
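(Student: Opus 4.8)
The plan is to first reduce to shifted families. Relabelling, we may assume that the $(t+1)$-set witnessing the semistar property is $D=[t+1]$. Recall (Section~\ref{sec:2}) that shifting preserves the $t$-intersecting property and does not increase $\bigl|\partial^j\mathcal F\bigr|$; it remains to observe that, with $D=[t+1]$, shifting also preserves the $(t+1)$-semistar property with the \emph{same} $D$. Indeed, for a shift $S_{ab}$ with $a<b$: if $b\le t+1$ then $a,b\in D$, so $\bigl|S_{ab}(F)\cap D\bigr|=|F\cap D|$ for every $F$; if $b\ge t+2$ then the swap can only move a value into $D$, so $\bigl|S_{ab}(F)\cap D\bigr|\ge|F\cap D|\ge t$. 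Iterating, we may assume $\mathcal F$ is shifted with $D=[t+1]$.

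One would then like to quote Proposition~\ref{prop:5.3}: if $\mathcal F\subseteq\mathcal A_0\cup\mathcal A_1$ then $w_t(\mathcal F)\le 1$ and \eqref{eq:5.3} follows at once from Theorem~\ref{th:2.10}. However, a shifted $t$-intersecting $(t+1)$-semistar need not be contained in $\mathcal A_0\cup\mathcal A_1$: already for $k=5$, $t=3$ the family $\bigl\{F\in{[7]\choose 5}:|F\cap[4]|\ge 3\bigr\}$ qualifies and has width $2$. So Theorem~\ref{th:2.10} alone is too weak, and the semistar structure must be used directly. The plan is to partition $\mathcal F$ by the trace on $D$: every $F\in\mathcal F$ has $|F\cap D|\in\{t,t+1\}$, so with $\overline{\mathcal F}_D=\{F\setminus D:D\subseteq F\in\mathcal F\}$ and $\overline{\mathcal G}_m=\{F\setminus D:F\cap D=D\setminus\{m\}\}$ for $m\in D$ one has $|\mathcal F|=|\overline{\mathcal F}_D|+\sum_{m\in D}|\overline{\mathcal G}_m|$. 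The $t$-intersecting property forces the families $\overline{\mathcal G}_m$ ($m\in D$) of $(k-t)$-subsets of the $(n-t-1)$-set $[n]\setminus D$ to be \emph{pairwise cross-intersecting} (two members with $D$-traces $D\setminus\{m\}$ and $D\setminus\{m'\}$, $m\ne m'$, share only $t-1$ points of $D$), while $\overline{\mathcal F}_D$ is unrestricted. I would then estimate $\bigl|\partial^j\mathcal F\bigr|$ by sorting the shadow sets $H$ according to the size of $H\cap D$ and, for each such size, combining \emph{internal} deletions inside $D$ --- these alone yield ${t+1\choose j}$ distinct shadow sets for each $R\in\overline{\mathcal F}_D$ (which overshoots the target rate ${t+2\choose j+1}\big/(t+2)$ by the factor $j+1\ge 3$), together with $\sum_{T\in{D\choose j+1}}\bigl|\bigcup_{m\in T}\overline{\mathcal G}_m\bigr|$ coming from the members meeting $D$ in exactly $t$ points --- with \emph{external} deletions from the $R$-parts, governed by the Sperner bound \eqref{eq:1.1} and by cross-intersection.

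The decisive point --- and the main obstacle --- is the bookkeeping for the members with $|F\cap D|=t$: internal deletions alone do \emph{not} reach the target for these. Even in the extreme case $\overline{\mathcal G}_1=\cdots=\overline{\mathcal G}_{t+1}$ (then this common family must be intersecting), the internal shadow ${t+1\choose j+1}\bigl|\overline{\mathcal G}_1\bigr|$ falls short of $\tfrac{1}{t+2}{t+2\choose j+1}(t+1)\bigl|\overline{\mathcal G}_1\bigr|$ by the factor $\tfrac{t+1-j}{t+1}$, so the external shadows must genuinely be used, and one must show the surplus produced by $\overline{\mathcal F}_D$ and by the unions $\bigcup_{m\in T}\overline{\mathcal G}_m$ exceeding a single $\overline{\mathcal G}_m$ exactly covers this deficit. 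A clean way to organize this --- which I would try first --- is to project onto $D$ via a virtual point $\omega\notin[n]$, sending $F$ to $F\cap D$ if $D\subseteq F$ and to $(F\cap D)\cup\{\omega\}$ otherwise: the image lies in ${D\cup\{\omega\}\choose t+1}$, is readily checked to be $t$-intersecting on the $(t+2)$-set $D\cup\{\omega\}$, and Theorem~\ref{th:1.3} then produces exactly the constant ${t+2\choose t+1-j}\big/{t+2\choose t+1}={t+2\choose j+1}\big/(t+2)$. The remaining task --- transferring this bound back to $\mathcal F$ in a form weighted by the multiplicities $\bigl|\overline{\mathcal F}_D\bigr|$ and $\bigl|\overline{\mathcal G}_m\bigr|$, and accounting for the extra shadow sets created by deletions outside $D$ (alternatively, an induction on $k$ exploiting that the target constant does not depend on $k$) --- is where the real work lies.
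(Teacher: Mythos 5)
Your reduction to shifted families with $D=[t+1]$ is correct, and your observation that such a family need not lie in $\mathcal A_0\cup\mathcal A_1$ (so that Theorem~\ref{th:2.10} alone is too weak) is accurate and important. Your diagnosis of the deficit is also right: for the members $F$ with $|F\cap D|=t$, internal deletions produce only a $\binom{t+1}{j+1}$-fold contribution per tail, against the $\tfrac{t+1}{t+2}\binom{t+2}{j+1}$ needed if all $t+1$ traces occur. But you stop precisely at the crux, explicitly conceding that the weighted transfer is ``where the real work lies.'' As written the proposal is a plan, not a proof.

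What is missing is the mechanism that forces $|\mathcal F_0|$ (your $|\overline{\mathcal F}_D|$) to be large enough to cover that deficit, and this is exactly what the paper supplies. One splits the tail family $\mathcal T$ into $\mathcal T_1$ (tails $T$ with a unique $G\in\binom{D}{t}$ such that $G\cup T\in\mathcal F$) and $\mathcal T_2$ (those with at least two such $G$). For $T\in\mathcal T_1$ the internal-shadow rate is $\binom{t}{j}$, which already exceeds $\binom{t+2}{j+1}/(t+2)$, so no compensation is needed there. For $\mathcal T_2$ one shows it is \emph{intersecting}: given $T\ne T'$ in $\mathcal T_2$, one can pick distinct $G\ne G'$ in $\binom{D}{t}$, and then $|(G\cup T)\cap(G'\cup T')|=(t-1)+|T\cap T'|\ge t$ forces $T\cap T'\ne\emptyset$. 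Theorem~\ref{th:1.3} then gives $|\partial\mathcal T_2|\ge|\mathcal T_2|$, and --- this is the key step your proposal never touches --- \emph{shiftedness} implies $\overline{\mathcal F}_0\supseteq\partial\mathcal T$, because replacing a point $x\in T$ by the missing point $y\in[t+1]\setminus G$ (with $y<x$) is a downward shift, so $[t+1]\cup(T\setminus\{x\})\in\mathcal F$. Chaining these gives $(t+1)|\mathcal F_0|\ge(t+1)|\mathcal T_2|\ge|\mathcal F_2|$, and a short coefficient identity, $\binom{t+1}{j}-\binom{t+2}{j+1}+\binom{t+1}{j+1}=0$, shows this inequality is exactly enough. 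Without some argument of this type controlling $|\mathcal F_0|$, neither your virtual-point projection (which discards the multiplicities you would need to track) nor the vague ``induction on $k$'' can be closed; so this is a genuine gap, not a bookkeeping omission.
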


Since $\mathcal A_0 \cup \mathcal A_1$ is a semistar with $D = [t + 1]$, Theorem \ref{th:5.5} generalizes Proposition \ref{prop:5.3}.

\begin{proof}
Without loss of generality let $D = [t + 1]$.
That is, $|F \cap [t + 1]| \geq t$ for all $F \in \mathcal F$.
Since shifting maintains this property and does not increase the shadow, we may assume that $\mathcal F$ is shifted.

Set $\mathcal F_0 = \{F \in \mathcal F : [t + 1] \subset \mathcal F\}$ and $\overline{\mathcal F}_0 = \bigl\{F \setminus [t + 1] : F \in \mathcal F_0\bigr\}$.
Define the restricted shadow $\partial_R^j \mathcal F_0$ by
$$
\partial_R^j \mathcal F_0 = \left\{S \cup T : S \in {[t + 1]\choose t + 1 - j}, T \in \overline{\mathcal F}_0\right\}.
$$
Define next $\mathcal T = \left\{ T \in {[t + 2, n]\choose k - t} : \exists G \in {[t + 1]\choose t}, G \cup T \in \mathcal F \right\}$.
For $T \in \mathcal T$ we define
$$
\mathcal G_T = \left\{ G \in {[t + 1]\choose t} : G \cup T \in \mathcal F\right\}
\ \ \text{ and } \ \
\mathcal F_T = \bigl\{G \cup T : G \in \mathcal G_T\bigr\}.
$$
Since $\mathcal G_T \subset {[t + 1]\choose t}$, \eqref{eq:1.1} yields
\beq
\label{eq:5.7}
\bigl|\partial^j \mathcal G_T\bigr| \geq \bigl|\mathcal G_T\bigr| {t + 1\choose t - j} \biggm/ {t + 1\choose t} = \bigl|\mathcal G_T\bigr| {t + 1\choose j + 1} \biggm/ (t + 1).
\eeq
Let us note that for $T \in \mathcal T$ the families $\mathcal F_T$ partition $\mathcal F \setminus \mathcal F_0$.

Let us divide $\mathcal T$ into two parts, $\mathcal T = \mathcal T_1 \cup \mathcal T_2$ where $\mathcal T_1 = \bigl\{T \in \mathcal T : \bigl|\mathcal G_T\bigr| = 1\bigr\}$,
$\mathcal T_2 = \bigl\{T \in \mathcal T : \bigl|\mathcal G_T\bigr| \geq 2\bigr\}$.
For $T \in \mathcal T_1$ one has $\bigl|\partial^j \mathcal G_T\bigr| = {t\choose j}$.
Setting $\mathcal F_i = \bigcup\limits_{T \in \mathcal T_i} \mathcal G_T$, $i = 1,2$, we have
\beq
\label{eq:5.8}
\bigl|\partial_R^j \mathcal F_1\bigr| = \bigl|\mathcal F_1\bigr| {t \choose j},
\eeq
and using \eqref{eq:5.7}
\beq
\label{eq:5.9}
\bigl|\partial_R^j \mathcal F_2\bigr| \geq \bigl|\mathcal F_2\bigr| \frac{{t + 1 \choose j + 1}}{t + 1}.
\eeq
Note that ${t\choose j}$ is larger than the coefficient in \eqref{eq:5.3}.
Indeed,
$$
\frac{{t + 2\choose j + 1}}{t + 2} = \frac{{t + 1\choose j}}{j + 1} = \frac{t + 1}{(j + 1)(t - j + 1)} {t\choose j} < {t \choose j}.
$$
From \eqref{eq:5.8}, \eqref{eq:5.9} and the obvious formula $\bigl|\partial_R^j \mathcal F_0\bigr| = \bigl|\mathcal F_0\bigr| {t + 1\choose j}$ we infer
\beq
\label{eq:5.10}
\bigl|\partial^j \mathcal F\bigr| \geq \sum_{0 \leq i \leq 2} \bigl|\partial_R^j \mathcal F_i\bigr| \geq \bigl|\mathcal F_0\bigr| {t + 1\choose j} + \bigl|\mathcal F_1\bigr| \frac{{t + 2\choose j + 1}}{t + 2} + \bigl|\mathcal F_2\bigr| \frac{{t + 1\choose j + 1}}{t + 1}.
\eeq

To conclude the proof we need a relation between $\mathcal F_0$ and $\mathcal F_2$.

\setcounter{claim}{5}
\begin{claim}
\label{cl:5.6}
$(t + 1) \cdot \bigl|\mathcal F_0\bigr| \geq \bigl|\mathcal F_2\bigr|$.
\end{claim}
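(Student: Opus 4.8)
The plan is to deduce the claim from the Intersecting Shadow Theorem (Theorem~\ref{th:1.3}) in its simplest instance $t=1$, applied not to $\mathcal F$ but to the family of tails $\mathcal T_2\subset{[t+2,n]\choose k-t}$. First I would record how shiftedness acts on the fibres $\mathcal G_T$. Since $T\subset[t+2,n]$ lies entirely above $[t+1]$, $G'\prec G$ in ${[t+1]\choose t}$ implies $G'\cup T\prec G\cup T$, so each $\mathcal G_T$ is downward closed for $\prec$. But $\prec$ linearly orders ${[t+1]\choose t}$: writing its members as $[t+1]\setminus\{i\}$, one has $[t+1]\setminus\{i\}\prec[t+1]\setminus\{i'\}$ iff $i>i'$, so the two $\prec$-smallest members are $[t]$ and $[t-1]\cup\{t+1\}$. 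A nonempty downward closed subset of a chain is an initial segment; hence $T\in\mathcal T$ forces $[t]\cup T\in\mathcal F$, and $T\in\mathcal T_2$ forces in addition $([t-1]\cup\{t+1\})\cup T\in\mathcal F$.

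The key observation I want to establish is that $\mathcal T_2$ is an \emph{intersecting} family of $(k-t)$-subsets of $[t+2,n]$. Given $T,T'\in\mathcal T_2$, set $F=([t-1]\cup\{t+1\})\cup T$ and $F'=[t]\cup T'$; both lie in $\mathcal F$ by the previous paragraph. Since $t\notin F$ and $t+1\notin F'$, while above $t+1$ the two sets meet exactly in $T\cap T'$, one gets $F\cap F'=[t-1]\cup(T\cap T')$; the $t$-intersecting property then yields $t\le|F\cap F'|=(t-1)+|T\cap T'|$, i.e.\ $T\cap T'\ne\emptyset$. Next I would check that $\partial\mathcal T_2\subseteq\overline{\mathcal F}_0$, where $\partial$ denotes the immediate shadow (inside ${[t+2,n]\choose k-t-1}$): for $T=(x_1,\dots,x_{k-t})\in\mathcal T_2$ and any index $p$, a coordinatewise comparison of the sorted sequences shows $[t+1]\cup(T\setminus\{x_p\})\prec([t-1]\cup\{t+1\})\cup T$ — they agree in the first $t-1$ coordinates; in coordinate $t$ they read $t$ versus $t+1$; in coordinate $t+1$ they read $t+1$ versus $x_1\ge t+2$; and in the remaining coordinates $T\setminus\{x_p\}$ is coordinatewise below $T$ — so $[t+1]\cup(T\setminus\{x_p\})\in\mathcal F$, i.e.\ $T\setminus\{x_p\}\in\overline{\mathcal F}_0$.

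It then remains to assemble the pieces. Theorem~\ref{th:1.3} with $t$ replaced by $1$ states that $|\partial\mathcal A|\ge|\mathcal A|$ for every intersecting family $\mathcal A$ (the degenerate case $n=k+1$, where $[t+2,n]$ has only $k-t$ elements, being trivial). Applying this to $\mathcal A=\mathcal T_2$ and using $\partial\mathcal T_2\subseteq\overline{\mathcal F}_0$ gives $|\mathcal F_0|=|\overline{\mathcal F}_0|\ge|\partial\mathcal T_2|\ge|\mathcal T_2|$. Since $\mathcal G_T\subseteq{[t+1]\choose t}$ forces $|\mathcal G_T|\le t+1$ for every $T$, we conclude $|\mathcal F_2|=\sum_{T\in\mathcal T_2}|\mathcal G_T|\le(t+1)|\mathcal T_2|\le(t+1)|\mathcal F_0|$, which is the claim.

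I expect the main obstacle to be finding this route rather than executing it: the tempting map $T\mapsto T\setminus\{\min T\}$ from $\mathcal T_2$ into $\overline{\mathcal F}_0$ is far from injective, so one cannot prove $|\mathcal F_0|\ge|\mathcal T_2|$ by an explicit injection; the point is to recognise that $\mathcal T_2$ is intersecting and that its shadow sits inside $\overline{\mathcal F}_0$, so that the classical $t=1$ intersecting shadow bound supplies the missing count. The two verifications in the second paragraph — that $\mathcal T_2$ is intersecting and that $\partial\mathcal T_2\subseteq\overline{\mathcal F}_0$ — are the technical heart, and both reduce to the coordinatewise comparisons indicated there.
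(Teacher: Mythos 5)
Your proof is correct and follows essentially the same route as the paper: show $\mathcal T_2$ is intersecting, apply the Intersecting Shadow Theorem (with $t=1$) to get $\lvert\partial\mathcal T_2\rvert\geq\lvert\mathcal T_2\rvert$, verify via shiftedness that $\partial\mathcal T_2\subseteq\overline{\mathcal F}_0$, and combine with $\lvert\mathcal F_2\rvert\leq(t+1)\lvert\mathcal T_2\rvert$. The only cosmetic difference is that you use the chain structure of ${[t+1]\choose t}$ to pin down the explicit witnesses $[t]$ and $[t-1]\cup\{t+1\}$, whereas the paper argues with arbitrary distinct $G,G'$ (and proves the marginally stronger $\partial\mathcal T\subseteq\overline{\mathcal F}_0$); these are the same argument.
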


\begin{proof}[Proof of the Claim]
First we show that $\mathcal T_2$ is intersecting.
Indeed, if $T \in \mathcal T_2$ then there are at least two choices of $G \in {[t + 1]\choose t}$, $G \in \mathcal G_T$.
Thus for $T, T' \in \mathcal T_2$ we can choose \emph{distinct} $G, G' \in {[t + 1]\choose t}$ so that $G \cup T$, $G' \cup T' \in \mathcal F$.
Now $\bigl|(G \cup T) \cap (G' \cup T')\bigr| = t - 1 + |T \cap T'|$.
Since $\mathcal F$ is $t$-intersecting, $T \cap T' \neq \emptyset$.

Applying Theorem \ref{th:1.3} to $\mathcal T_2$ yields $\bigl|\partial \mathcal T_2\bigr| \geq \bigl|\mathcal T_2\bigr|$.
The inequality $\bigl|\mathcal F_2\bigr| \leq (t + 1) \bigl|\mathcal T_2\bigr|$ should be obvious.
To conclude the proof of the claim let us show
$$
\bigl|\mathcal F_0\bigr| \geq \bigl|\partial \mathcal T_2\bigr|.
$$

More is true. Namely
\beq
\label{eq:5.11}
\overline{\mathcal F}_0 \supset \partial \mathcal T.
\eeq
To prove \eqref{eq:5.11} pick an arbitrary $V \in \partial \mathcal T$.
Then we can choose $G \in {[t + 1]\choose t}$, $T \in \mathcal T$ and $x \in T$ so that $V = T \setminus \{x\}$ and $G \cup T \in \mathcal F$.
Let $y$ be the unique element in $[t + 1] \setminus G$.
Obviously $y < x$.
Thus $[t + 1] \cup V \prec G \cup T$ whence $[t + 1] \cup V \in \mathcal F$.
That is, $V \in \overline{\mathcal F}_0$.
\end{proof}

Now let us rewrite \eqref{eq:5.10}:
$$
\bigl|\partial^j \mathcal F\bigr| \geq |\mathcal F| \frac{{t + 2\choose j + 1}}{t + 2} + \left\{ \bigl|\mathcal F_0\bigr|
 \left({t + 1\choose j} - \frac{{t + 2\choose j + 1}}{t + 2}\right) - \bigl|\mathcal F_2\bigr| \left(\frac{{t + 2\choose j + 1}}{t + 2} - \frac{{t + 1\choose j + 1}}{t + 1}\right)\right\}.
$$
By Claim \ref{cl:5.6} the quantity in $\bigl\{\quad\bigr\}$ is at least
\begin{gather*}
\bigl|\mathcal F_0\bigr|\left({t + 1\choose j} - \frac{{t + 2\choose j + 1}}{t + 1}\right) - (t + 1) \left(\frac{{t + 2\choose j + 1}}{t + 2} - \frac{{t + 1\choose j + 1}}{t + 1}\right)\\
= \bigl|\mathcal F_0\bigr| \left({t + 1\choose j} - {t + 2\choose j + 1} + {t + 1\choose j + 1} \right) = 0,
\end{gather*}
completing the whole proof.
\end{proof}

\section{On the structure and shadow of very large families}
\label{sec:6}

Throughout this section $\mathcal F \subset {[n]\choose k}$ is shifted and $t$-intersecting.
We assume also that $n \geq (k - t + 1)(t + 1)$ which guarantees by Theorem \ref{th:5.1} (Full Erd\H{o}s--Ko--Rado Theorem) that $|\mathcal F| \leq \bigl|\mathcal A_0\bigr|$.

Since $\mathcal A_0$ is a $t$-star, it is natural to investigate the maximum of $\mathcal F$ assuming $\mathcal F \not\subset \mathcal A_0$, i.e., $\mathcal F$ is not a $t$-star.
Of course, $\mathcal A_1$ is a strong candidate, but there is an other one.

\begin{definition}
\label{def:6.1}
Define
$\mathcal H = \mathcal H(n, k, t) = \Bigl\{H \in {[n]\choose k} : [t] \subset H, H \cap [t + 1, k + 1] \neq \emptyset\Bigr\} \cup \Bigl\{[k + 1] \setminus \{x\} : x \in [t]\Bigr\}$.
\end{definition}

\setcounter{theorem}{1}
\begin{theorem}[Hilton--Milner--Frankl Theorem]
\label{th:6.2}
Let $n \geq (k - t + 1)(t + 1)$.
Suppose that $\mathcal F \subset {[n]\choose k}$ is $t$-intersecting but $\mathcal F$ is not a $t$-star.
Then
\beq
\label{eq:6.1}
|\mathcal F| \leq \max\bigl\{\bigl|\mathcal A_1\bigr|, |\mathcal H|\bigr\}.
\eeq
Moreover, except for the case $(n, k, t) = (2k, k, 1)$ equality holds only if $\mathcal F$ is isomorphic to $\mathcal A_1$ or $\mathcal H$.
\end{theorem}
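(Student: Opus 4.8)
The plan is to argue by shifting, in the style of the original Hilton--Milner proof and of Frankl's extension to $t\ge 2$: reduce to a shifted family, then run a structural case analysis.

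\emph{Shifting.} The elementary shift $S_{ij}$ ($i<j$) preserves the $t$-intersecting property and keeps $|\mathcal F|$ unchanged. Applying shifts one at a time, and using that for a \emph{shifted} family ``$t$-star'' is equivalent to ``$\subseteq\mathcal A_0$'' (the kernel of a shifted $t$-star necessarily contains $[t]$), we reach one of two situations. Either the process terminates at a shifted $\mathcal F^{*}$ that is still not a $t$-star, hence $\mathcal F^{*}\not\subseteq\mathcal A_0$, which is handled below; or at some stage a single shift $S_{ij}$ converts a non-$t$-star $\mathcal G$ with $|\mathcal G|=|\mathcal F|$ into a $t$-star. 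In the latter case the kernel of $S_{ij}(\mathcal G)$ must involve $i$, say it equals $K\cup\{i\}$ with $|K|=t-1$ and $i,j\notin K$, so every $G\in\mathcal G$ contains $K$ and at least one of $i,j$. Splitting $\mathcal G=\mathcal G'\sqcup\mathcal G''$ by whether $i\in G$, the $t$-intersecting condition between the two parts forces the traces $\{G'\setminus(K\cup\{i\})\}$ and $\{G''\setminus(K\cup\{j\})\}$ on the remaining $n-t-1$ points to be (essentially) cross-intersecting $(k-t)$-uniform families; bounding them by Kruskal--Katona --- equivalently by Theorem~\ref{th:1.3} applied to $(k-t)$-sets --- and comparing with $|\mathcal G'|$ itself yields $|\mathcal G|\le|\mathcal H|$, with equality forcing $\mathcal G\cong\mathcal H$.

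\emph{The shifted non-star case.} Suppose $\mathcal F$ is shifted, $t$-intersecting, and $\mathcal F\not\subseteq\mathcal A_0$. Any member $F$ with $[t]\not\subseteq F$ satisfies $E\prec F$ in the shifting order, where $E:=[k+1]\setminus\{t\}$; hence $E\in\mathcal F$. Split at the coordinate $t$: set $\mathcal A=\{A\setminus\{t\}:A\in\mathcal F,\ t\in A\}\subseteq\binom{[n]\setminus\{t\}}{k-1}$ and $\mathcal B=\{A\in\mathcal F:t\notin A\}\subseteq\binom{[n]\setminus\{t\}}{k}$. Both are shifted, $\mathcal A$ is $(t-1)$-intersecting, $\mathcal B$ is $t$-intersecting, the pair $(\mathcal A,\mathcal B)$ is cross-$t$-intersecting, and $E\in\mathcal B\neq\emptyset$. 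Since $|\mathcal F|=|\mathcal A|+|\mathcal B|$, it remains to maximize this sum under those constraints. Here the Full Erd\H{o}s--Ko--Rado Theorem (Theorem~\ref{th:5.1}) bounds $|\mathcal A|$ and $|\mathcal B|$ and the shadow bound of Theorem~\ref{th:1.3} controls the cross-conditions linking $\mathcal A$ to $\mathcal B$; carrying out the optimization, the sum is maximal either when the deviation from $\mathcal A_0$ is spread across $[t+2]$, giving $|\mathcal F|\le|\mathcal A_1|$, or when it is concentrated on one extra coordinate, giving $|\mathcal F|\le|\mathcal H|$, and the extremal families are precisely $\mathcal A_1$ and $\mathcal H$.

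\emph{Where the difficulty lies.} The delicate part is everything about equality. One must compare $|\mathcal A_1|$ and $|\mathcal H|$ as functions of $n,k,t$ (this is why \eqref{eq:6.1} carries a maximum rather than a single term) and trace the extremal configurations through both branches; one must deal with the boundary $n=(k-t+1)(t+1)$, where already $|\mathcal A_0|=|\mathcal A_1|$ and uniqueness is tight; and one must isolate the sole genuine exception $(n,k,t)=(2k,k,1)$, where $\mathcal A_1$ and $\mathcal H$ degenerate into ``half of $\binom{[2k]}{k}$''-type families and further extremal families occur. Everything else is routine binomial-coefficient bookkeeping of the kind already carried out in Sections~\ref{sec:3} and~\ref{sec:5}.
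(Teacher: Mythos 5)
The paper does not prove Theorem~\ref{th:6.2}; it imports it as a known result, citing Hilton--Milner for $t=1$, Frankl (\cite{F78}, \cite{F78b}) for $t\ge 15$, and Ahlswede--Khachatrian \cite{AK2} for the full range, with pointers to various shorter proofs of special cases (\cite{FF2}, \cite{KZ}, \cite{HK}, \cite{F19}). There is therefore no ``paper's own proof'' for your attempt to be measured against, and a self-contained proof of this statement is a substantial research-length undertaking, not something the paper expects the reader to rederive.

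Concerning your sketch itself: the opening moves are sound and standard --- shifting preserves $t$-intersection, a shifted family is a $t$-star iff it lies in $\mathcal A_0$, and if $\mathcal F$ is shifted and not a $t$-star then $E=[k+1]\setminus\{t\}\in\mathcal F$. The split into ``shifting stabilizes as a non-star'' versus ``some single shift $S_{ij}$ creates a star'' is also the right dichotomy, and in the latter case the observation that the new kernel is $K\cup\{i\}$ and that $\mathcal G$ decomposes into two cross-intersecting traces is correct in spirit. But from there on the argument contains no actual content: ``bounding them by Kruskal--Katona \dots yields $|\mathcal G|\le|\mathcal H|$'' and ``carrying out the optimization, the sum is maximal either when \dots'' are assertions of the conclusion, not derivations. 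The genuine difficulty of this theorem is precisely that optimization, and it is well known that the shifting-plus-Kruskal--Katona route (Frankl's argument) only succeeds for $t$ sufficiently large (the paper records $t\ge 15$). For $2\le t\le 14$ all known proofs go through the Ahlswede--Khachatrian generating-set machinery or comparable heavy tools; nothing in your outline addresses this range, and the closing remark that what remains is ``routine binomial-coefficient bookkeeping of the kind already carried out in Sections~\ref{sec:3} and~\ref{sec:5}'' is not accurate --- those sections do easier, asymptotic, or coarser estimates and nowhere establish a tight bound of Hilton--Milner--Frankl type together with a uniqueness statement. In short: what you have is a plausible first page of a proof strategy for large $t$, not a proof of the theorem as stated, and the step you label ``where the difficulty lies'' really is where the theorem lies.
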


The case $t = 1$ was proved by Hilton and Milner (\cite{HM}).
There have been various shorter proofs given cf.\ \cite{FF2}, \cite{KZ}, \cite{HK} or \cite{F19}.
The case of $t \geq 15$ was proved in \cite{F78}, cf.\ also \cite{F78b}.
Ahlswede and Khachatrian \cite{AK2} gave a different proof valid for the full range.

One should note that for $t + 2 > k - t + 1$, i.e., $k \leq 2t$, $|\mathcal A_1| > |\mathcal H|$.
This implies

\setcounter{corollary}{2}
\begin{corollary}
\label{cor:6.3}
Suppose that $n \geq (k - t + 1)(t + 1)$, $k \leq 2t$, $t > j \geq 1$.
Let $\mathcal F \subset {[n]\choose k}$ be $t$-intersecting and $|\mathcal F| > |\mathcal A_1|$.
Then
\beq
\label{eq:6.2}
\bigl|\partial^j \mathcal F\bigr| \bigm/ |\mathcal F| \geq \bigl|\partial^j \mathcal A_0\bigr| \bigm/ \bigl|\mathcal A_0\bigr| > {t\choose j}.
\eeq
\end{corollary}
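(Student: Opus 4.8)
The plan is to show that the size hypothesis $|\mathcal F| > |\mathcal A_1|$ forces $\mathcal F$ to be a $t$-star, and then to quote \eqref{eq:5.4} of Proposition~\ref{prop:5.3}. The only extra input needed is that for $k \leq 2t$ the Hilton--Milner--Frankl bound \eqref{eq:6.1} simplifies.

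I would proceed as follows. The condition $k \leq 2t$ is precisely $t + 2 > k - t + 1$, which is the inequality under which, as noted just above, $|\mathcal A_1| > |\mathcal H|$; hence $\max\{|\mathcal A_1|, |\mathcal H|\} = |\mathcal A_1|$, and the assumption $|\mathcal F| > |\mathcal A_1|$ says exactly that $|\mathcal F|$ violates \eqref{eq:6.1}. (At the single extreme value $n = (k-t+1)(t+1)$ one has $|\mathcal A_0| = |\mathcal A_1|$, so by Theorem~\ref{th:5.1} no such $\mathcal F$ exists and there is nothing to prove; thus we may assume $n > (k-t+1)(t+1)$.) By the contrapositive of Theorem~\ref{th:6.2}, $\mathcal F$ must then be a $t$-star, i.e.\ there is a $t$-set $C$ with $C \subset F$ for all $F \in \mathcal F$. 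Both sides of \eqref{eq:6.2} are invariant under permutations of $[n]$, so after relabelling we may take $C = [t]$, which means $\emptyset \neq \mathcal F \subset \mathcal A_0$ (nonemptiness being immediate from $|\mathcal F| > |\mathcal A_1| \geq 0$). Now \eqref{eq:5.4} of Proposition~\ref{prop:5.3} applies verbatim and yields $\bigl|\partial^j\mathcal F\bigr|/|\mathcal F| \geq \bigl|\partial^j\mathcal A_0\bigr|/|\mathcal A_0| > {t \choose j}$, which is \eqref{eq:6.2}.

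There is no genuine obstacle: the corollary is essentially a repackaging of Theorem~\ref{th:6.2} and Proposition~\ref{prop:5.3}. The only steps deserving a line each are verifying $|\mathcal A_1| > |\mathcal H|$ in the range $k \leq 2t$ (a routine comparison of binomial sums, already recorded in the text) and the harmless relabelling that puts the centre of the $t$-star on $[t]$ so that Proposition~\ref{prop:5.3} becomes literally applicable.
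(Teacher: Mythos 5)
Your proof is correct and follows exactly the route the paper intends: the inequality $|\mathcal A_1|>|\mathcal H|$ for $k\le 2t$ (stated just before the corollary) reduces the Hilton--Milner--Frankl bound \eqref{eq:6.1} to $|\mathcal A_1|$, so $|\mathcal F|>|\mathcal A_1|$ forces $\mathcal F$ to be a $t$-star by Theorem~\ref{th:6.2}, and \eqref{eq:5.4} of Proposition~\ref{prop:5.3} then gives \eqref{eq:6.2}. The side remark about $n=(k-t+1)(t+1)$ is harmless but unnecessary, since Theorem~\ref{th:6.2} already covers that boundary case.
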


Our aim is to prove a similar result for the case $k > 2t$ as well.

We need quite some preparation.
Let us recall a structural result from \cite{F87}.
For a shifted $t$-intersecting family $\mathcal F \subset {[n]\choose k}$ define its base $\mathcal B = \mathcal B(\mathcal F)$ by
$$
\mathcal B = \bigl\{ F \cap [2k - t] : F \in \mathcal F\bigr\}.
$$
Define $\mathcal B^{(\ell)} = \{B \in \mathcal B : |B| = \ell\}$, $b_\ell = \bigl|\mathcal B^{(\ell)}\bigr|$.

\setcounter{proposition}{3}
\begin{proposition}[\cite{F87}]
\label{prop:6.4}
{\rm (i) $\sim$ (iv)} hold.

\hspace*{6.8pt}{\rm (i)} $\mathcal B$ is shifted and $t$-intersecting.

\hspace*{3.4pt}{\rm (ii)} $b_\ell = 0$ for $\ell < t$.

{\rm (iii)} $b_t \leq 1$ with $b_t = 1$ implying that $\mathcal F$ is a $t$-star.

{\rm (iv)} $|\mathcal F| \leq \sum\limits_{t \leq \ell \leq k} b_\ell {n - 2k + t\choose k - \ell}$.
\end{proposition}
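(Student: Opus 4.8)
The plan is to verify the four assertions essentially in the order stated, each reducing to a short combinatorial observation about shifted $t$-intersecting families. For (i), the key point is that the ground set $[2k-t]$ is $\prec$-downward closed as a set of coordinates, so intersecting $\mathcal F$ with $[2k-t]$ commutes well with shiftedness: if $B = F \cap [2k-t] \in \mathcal B$ and $B' \prec B$ in the shifting order on $\ell$-sets (where $\ell = |B|$), then replacing the coordinates of $B$ inside $F$ by those of $B'$ produces a set $F' \prec F$, hence $F' \in \mathcal F$, and $F' \cap [2k-t] \supseteq B'$; a little care (one may need to further shift within $F'$ to land exactly on $B'$, or argue that $\mathcal B$ is closed under taking subsets of its members restricted appropriately) gives $B' \in \mathcal B$. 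The $t$-intersecting property of $\mathcal B$ is the heart of the matter and I expect it to be the main obstacle: for $B = F \cap [2k-t]$ and $B' = F' \cap [2k-t]$ one must show $|B \cap B'| \geq t$. If $|F \cap F'| \geq t$ but some of the $t$ common elements lie outside $[2k-t]$, the naive bound fails, so one exploits shiftedness to push those common elements down into $[2k-t]$: since $F, F'$ each have at most $k$ elements, at most $k - |B|$ (resp. $k - |B'|$) of their elements lie in $[2k-t+1, n]$, and an exchange/shifting argument shows that the "high" common elements can be replaced by low ones still inside $[2k-t]$ without leaving $\mathcal F$. This is where the specific size $2k-t$ of the base set is used, and it is the step I would write out most carefully.

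For (ii), $b_\ell = 0$ for $\ell < t$ is immediate: any $B \in \mathcal B$ satisfies $|B \cap B'| \geq t$ by (i), in particular $|B| = |B \cap B| \geq t$. For (iii), suppose $b_t = 1$, say $\mathcal B^{(t)} = \{C\}$ with $|C| = t$. Then for every $F \in \mathcal F$, $|(F \cap [2k-t]) \cap C| \geq t = |C|$ by the $t$-intersecting property of $\mathcal B$, forcing $C \subseteq F \cap [2k-t] \subseteq F$; hence $\mathcal F$ is a $t$-star with core $C$. The bound $b_t \leq 1$ then follows because two distinct $t$-element members $C, C'$ of $\mathcal B$ would satisfy $|C \cap C'| \geq t$, impossible for distinct $t$-sets.

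For (iv), the plan is a simple counting/partition argument. For each $F \in \mathcal F$ write $B(F) = F \cap [2k-t] \in \mathcal B$ and $F \setminus B(F) \subseteq [2k-t+1, n]$, a set of size $k - |B(F)|$ drawn from the $n - (2k-t) = n - 2k + t$ elements outside the base set. The map $F \mapsto (B(F), F \setminus B(F))$ is injective, and its image is contained in $\{(B, S) : B \in \mathcal B,\ S \in \binom{[2k-t+1,n]}{k - |B|}\}$. Summing over the $b_\ell$ bases of each size $\ell$ gives $|\mathcal F| \leq \sum_{t \leq \ell \leq k} b_\ell \binom{n - 2k + t}{k - \ell}$, which is exactly (iv). No further estimate is needed here; the content of the proposition is really (i), and once that is in hand (ii)–(iv) are routine.
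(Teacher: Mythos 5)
The paper does not prove Proposition~\ref{prop:6.4}; it is cited from \cite{F87}. So I can only evaluate your argument on its own terms, not compare it to an internal proof.

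Parts (ii), (iii), (iv) are correct and essentially the standard arguments: (ii) from $|B|=|B\cap B|\ge t$, (iii) from the fact that distinct $t$-sets cannot $t$-intersect together with $C\subset F\cap[2k-t]$ forced for every $F$, and (iv) from the injective map $F\mapsto(F\cap[2k-t],\,F\setminus[2k-t])$ whose image sits inside $\bigcup_{B\in\mathcal B}\{B\}\times\binom{[2k-t+1,n]}{k-|B|}$. Nothing to add there.

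For (i), you hedge more than necessary on shiftedness: if $B=F\cap[2k-t]$ and $B'\prec B$ with $|B'|=|B|$, then automatically $B'\subset[2k-t]$, and $F''=B'\cup(F\setminus[2k-t])$ satisfies $F''\prec F$ coordinatewise (the tail $F\setminus[2k-t]$ is untouched and lies entirely above $[2k-t]$), hence $F''\in\mathcal F$ and $F''\cap[2k-t]=B'$ exactly; no further shifting is needed.

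The real content is the $t$-intersecting property of $\mathcal B$, which you correctly identify as the crux and as an exchange argument, but the cardinality observation you gesture at (``at most $k-|B|$ elements outside $[2k-t]$'') is not the operative one. What one actually uses is: if $F,F'\in\mathcal F$ and $|F\cap F'\cap[2k-t]|\le t-1$, choose such a pair minimizing $|(F\cap F')\setminus[2k-t]|$. This quantity is positive (else $|F\cap F'|\le t-1$). Pick $x\in F\cap F'$ with $x>2k-t$. Since $|F\cup F'|=2k-|F\cap F'|\le 2k-t$ and $x\in(F\cup F')\setminus[2k-t]$, one gets $|(F\cup F')\cap[2k-t]|\le 2k-t-1$, so there is $y\in[2k-t]\setminus(F\cup F')$. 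Then $F_y=(F\setminus\{x\})\cup\{y\}\prec F$ lies in $\mathcal F$, still satisfies $|F_y\cap F'\cap[2k-t]|\le t-1$ (as $y\notin F'$), and has strictly smaller $|(F_y\cap F')\setminus[2k-t]|$, contradicting minimality. This is where $2k-t$ is used, via the union bound $|F\cup F'|\le 2k-t$, not via the individual complements $k-|B|$. With that step filled in, your proposal is a correct proof of the cited proposition.
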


Let us mention that using Theorem \ref{th:1.3} (i) implies $\bigl|\partial^t \mathcal B^{(\ell)}\bigr| \geq \bigl|\mathcal B^{(\ell)}\bigr|$.
Since $\partial^t \mathcal B^{(\ell)} \subset {[2k - t]\choose \ell - t}$,
\beq
\label{eq:6.3}
b_\ell \leq {2k - t\choose \ell - t}.
\eeq

For $\ell = t + 1$ one can analyze the possible structure of $\mathcal B^{(\ell)}$.
Note that $[t + 1] \prec [t] \cup \{t + 2\}$ are the two smallest $(t + 1)$-sets in the shifting partial order.
The third ex aequo are $A_3 = [t + 2] \setminus \{t\}$ and $D_3 = [t] \cup \{t + 3\}$.

\setcounter{claim}{4}
\begin{claim}
\label{cl:6.5}
If $A_3 \in \mathcal B^{(t + 1)}$ then $\mathcal F \subset \mathcal A_1$.
\end{claim}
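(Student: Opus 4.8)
If $A_3 = [t+2]\setminus\{t\} \in \mathcal B^{(t+1)}$ then $\mathcal F \subset \mathcal A_1$.\textbf{Proof proposal.}
The plan is to show that every $F \in \mathcal F$ contains $[t-1]$ together with at least one of $t$ or $t+1$; since $\mathcal A_1 = \mathcal A_1(n,k,t)$ consists exactly of those $k$-sets $F$ with $|F \cap [t+1]| \geq t$, this is precisely the statement $\mathcal F \subset \mathcal A_1$. (Recall $\mathcal A_1 = \mathcal A_1(n,k,t) = \{A : |A \cap [t+2]| \geq t+1\}$ in the notation of Section~\ref{sec:2}; one checks this coincides with the family of $F$ having $[t-1] \subset F$ and $\{t,t+1\} \cap F \neq \emptyset$, using shiftedness.) So it suffices to rule out, for a shifted $\mathcal F$, the existence of an $F \in \mathcal F$ with $|F \cap [t+1]| \leq t-1$.

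First I would record the consequence of the hypothesis: since $A_3 \in \mathcal B^{(t+1)}$, there is some $F_0 \in \mathcal F$ with $F_0 \cap [2k-t] = A_3 = \{1,2,\dots,t-1,t+1,t+2\}$, and in particular $A_3 \subset F_0$, so $F_0 \cap [t+1] = \{1,\dots,t-1,t+1\}$ has size $t$. Now suppose for contradiction there is $G \in \mathcal F$ with $|G \cap [t+1]| \leq t-1$. By shiftedness we may push $G$ down: replacing $G$ by a $\prec$-minimal element of $\mathcal F$ below it only decreases $|G \cap [t+1]|$ is not automatic, but we can at least arrange (again by shiftedness, moving the elements of $G$ outside $[t+1]$ down as far as possible) that $G$ contains a prescribed small set. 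The key point is a counting/intersection argument: $|A_3 \cap G| \geq t$ forces $G$ to contain at least $t$ of the $t+1$ elements $\{1,\dots,t-1,t+1,t+2\}$; if $G$ misses $t+1$ then $G \supset \{1,\dots,t-1,t+2\}$, and if $G$ misses $t+2$ then $G \supset \{1,\dots,t-1,t+1\}$; either way $G \supset [t-1]$ and $G$ meets $\{t+1,t+2\}$, but $G$ might still miss $t$, which is consistent with $|G\cap[t+1]| = t-1$ only if $G$ also misses $t+1$ — so we are forced into the first case $\{1,\dots,t-1,t+2\} \subset G$, $t,t+1 \notin G$.

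The heart of the argument is then to derive a contradiction from having both $F_0 = A_3 \cup (\text{tail})$ and this $G$ in a shifted $t$-intersecting family. The idea is to use the shift that swaps $t+1$ and $t+2$: since $t+1 \notin G$ and $t+2 \in G$, shiftedness (the shift $S_{t+1,t+2}$) gives $G' := (G \setminus \{t+2\}) \cup \{t+1\} \in \mathcal F$, so without loss $t+1 \in G$, $t,t+2 \notin G$, $[t-1] \subset G$. Now intersect $G$ with the shifted image of $F_0$ obtained by shifting its tail down to the interval $[2k-t]$: one produces from $F_0$, by shiftedness, an element $F_1 \in \mathcal F$ with $F_1 \cap [t+2] = \{1,\dots,t-1,t+1,t+2\}$ and the rest of $F_1$ inside a bounded window; then a further shift swapping $t+1$ and $t+2$ in $F_1$ (legal since we can track which coordinate each lies in) produces $F_2 \in \mathcal F$ with $F_2 \cap [t+2] = \{1,\dots,t-1,t+2\}$, i.e.\ $t+1 \notin F_2$. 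Now $|F_2 \cap G| \leq (t-1) + |(F_2 \setminus [t+1]) \cap (G \setminus [t+1])|$, and since both lie in a controlled range beyond $[2k-t]$ one arranges, by shifting, that these tails are disjoint, yielding $|F_2 \cap G| = t-1 < t$, the desired contradiction.

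\textbf{Main obstacle.} The routine part is the dictionary between $\mathcal A_1$ and $|F \cap [t+1]| \geq t$; the delicate step is the last one — legitimately performing the sequence of shifts on $F_0$ (equivalently, choosing the right representatives $F_2$ and $G$ in $\mathcal F$) so that their intersection outside $[t-1]$ is empty. This requires care because shifting $F_0$ down may collide with elements already present, and one must verify that a minimal element of $\mathcal B^{(t+1)}$ above $A_3$ together with a suitable tail-disjoint partner actually both survive in $\mathcal F$; most likely the cleanest route, given Proposition~\ref{prop:6.4}, is to argue directly with the base: $A_3 \in \mathcal B$ and $G \cap [2k-t] \in \mathcal B$ with $G \cap [t+1]$ small would force two sets in the shifted $t$-intersecting family $\mathcal B$ with intersection $< t$ inside the universe $[2k-t]$, which is impossible — reducing everything to the already-shifted, already-bounded family $\mathcal B$ and sidestepping the tail bookkeeping entirely. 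I expect the published proof to take this $\mathcal B$-based shortcut.
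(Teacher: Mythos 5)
There are real problems in the main body of the argument, though you do land on the right idea in your closing paragraph. First, the ``dictionary'' is wrong: by the definition in Section~\ref{sec:2}, $\mathcal A_1 = \{A : |A\cap[t+2]| \geq t+1\}$, which is \emph{not} ``those $F$ with $|F\cap[t+1]|\geq t$'' (that is a strictly larger family, containing e.g.\ $\mathcal A_0$) and is not ``$[t-1]\subset F$ and $\{t,t+1\}\cap F\neq\emptyset$'' either; the statement to rule out is the existence of $F\in\mathcal F$ with $|F\cap[t+2]| \leq t$. Second, the step that produces $F_2$ from $F_0$ by ``shifting $t+1$ to $t+2$'' is an \emph{upward} move in the partial order $\prec$: shiftedness (Definition~\ref{def:2.1}) only gives closure downward, so you may replace a $t+2$ by a $t+1$, never a $t+1$ by a $t+2$. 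Hence $F_2$ need not lie in $\mathcal F$ and the subsequent disjoint-tail manipulation has no foundation. Third, even the intermediate inference ``$|A_3\cap G|\geq t$'' for $G\in\mathcal F$ is not direct: $A_3$ is a member of the base $\mathcal B$, not of $\mathcal F$, so one must invoke that $\mathcal B$ is $t$-intersecting (Proposition~\ref{prop:6.4}(i)), which you only mention at the very end.

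Your closing guess about the ``$\mathcal B$-based shortcut'' is correct, and it collapses all of the above: if $\mathcal F\not\subset\mathcal A_1$ there is $F\in\mathcal F$ with $|F\cap[t+2]|\leq t$, and by shiftedness one may take $F$ with $F\cap[t+2]=[t]$ (push the first $t$ coordinates of $F$ down to $[t]$, leaving the rest untouched); then $F\cap A_3 = [t-1]$, contradicting Proposition~\ref{prop:6.4}(i). You should discard the extended shifting argument and just execute that two-line observation.
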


\begin{proof}
We must show $|F \cap [t + 2]| \geq t + 1$.
If this fails then using shiftedness we can find $F$ with $F \cap [t + 2] = [t]$.
This implies $F \cap A_3 = [t - 1]$ contradicting Proposition \ref{prop:6.4}~(i).
\end{proof}

From now on throughout this section we suppose $\mathcal F \not\subset \mathcal A_1$ and thereby $A_3 \notin \mathcal B^{(t + 1)}$.

\begin{claim}
\label{cl:6.6}
If $A_3 \notin \mathcal B^{(t + 1)}$ then $\mathcal B^{(t + 1)} = \left\{[t] \cup \{x\} : t + 1 \leq x \leq t + b_{t + 1}\right\}$.
\end{claim}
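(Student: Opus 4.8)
The plan is to pin down $\mathcal B^{(t+1)}$ completely using shiftedness together with the constraint that $A_3=[t+2]\setminus\{t\}$ is excluded. First I observe that $\mathcal B^{(t+1)}$ is itself a shifted family of $(t+1)$-subsets of $[2k-t]$ — this is immediate because $\mathcal B$ is shifted by Proposition \ref{prop:6.4}(i) and a shift between sets of equal size stays within that size class. In the shifting partial order on $(t+1)$-sets, the initial segment looks like $[t+1]\prec [t]\cup\{t+2\}\prec A_3 \equiv D_3 \prec \cdots$, where $A_3=[t+2]\setminus\{t\}$ and $D_3=[t]\cup\{t+3\}$ are the (ex aequo) third-smallest, and every $(t+1)$-set not of the form $[t]\cup\{x\}$ is $\succeq A_3$: indeed if $B=(b_1,\dots,b_{t+1})$ is not of this shape then $b_t\geq t+1$, so $b_t\geq (A_3)_t$ and $b_i\geq i=(A_3)_i$ for $i<t$, while $b_{t+1}\geq t+2=(A_3)_{t+1}$, giving $A_3\prec B$.

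From here the argument is a short closure step. Since $A_3\notin\mathcal B^{(t+1)}$ and $\mathcal B^{(t+1)}$ is shifted (downward closed under $\prec$ within the $(t+1)$-sets), no $B\succeq A_3$ can lie in $\mathcal B^{(t+1)}$; hence every member of $\mathcal B^{(t+1)}$ is of the form $[t]\cup\{x\}$. Conversely, again by shiftedness, if $[t]\cup\{x\}\in\mathcal B^{(t+1)}$ for some $x$ then $[t]\cup\{x'\}\in\mathcal B^{(t+1)}$ for every $x'$ with $t+1\le x'\le x$, because $[t]\cup\{x'\}\prec [t]\cup\{x\}$. Thus $\mathcal B^{(t+1)}$ is exactly an initial segment $\{[t]\cup\{x\}: t+1\le x\le t+b_{t+1}\}$, since $b_{t+1}=|\mathcal B^{(t+1)}|$ counts precisely these sets. (One should also note $b_{t+1}\le k-t$ is forced, but this is not needed for the statement; the bound \eqref{eq:6.3} already gives $b_{t+1}\le 2k-t-t = 2(k-t)$, and the explicit description makes the sharper bound transparent.)

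The only mild subtlety — and the step I would be most careful about — is verifying the claim that a non-$[t]\cup\{x\}$ set is always $\succeq A_3$; this is where one genuinely uses that $t$ is the number of "forced small" coordinates. The point is that being $t$-intersecting and shifted does \emph{not} a priori force $[t]$ into each base element, so one cannot shortcut by assuming $B\supset[t]$; instead one argues coordinatewise as above, using only $b_i\ge i$ (which holds for any set written in increasing order) together with the case distinction on whether $b_t=t$ or $b_t\ge t+1$. Once that comparison is in hand, the downward-closure of $\mathcal B^{(t+1)}$ does all the remaining work and the proof is complete.
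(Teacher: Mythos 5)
Your proof is correct, and it takes a genuinely different route from the paper's.  The paper splits on $b_{t+1}\le 2$ (trivial by shiftedness) versus $b_{t+1}\ge 3$, argues in the latter case that $D_3\in\mathcal B^{(t+1)}$, and then invokes the \emph{$t$-intersecting} property of $\mathcal B$ (Proposition~\ref{prop:6.4}(i)): since $D_1,D_2,D_3\in\mathcal B^{(t+1)}$, every $B\in\mathcal B^{(t+1)}$ meets each $D_i$ in $\ge t$ points, which forces $[t]\subset B$.  You instead prove the purely order-theoretic fact that the $\prec$-upset of $A_3$ among $(t+1)$-sets is exactly the complement of the chain $\bigl\{[t]\cup\{x\}\bigr\}$: if $B$ is not of this form then $b_t\ge t+1$, hence $b_{t+1}\ge t+2$, and coordinatewise $A_3\preceq B$.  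Combined with the observation that $\mathcal B^{(t+1)}$ is itself a shifted family of $(t+1)$-sets, excluding $A_3$ kills its entire upset, and downward closure of the chain finishes the job.  Your argument is shorter, avoids the case split, and --- notably --- never uses that $\mathcal B$ is $t$-intersecting, so it establishes the claim under the weaker hypothesis that $\mathcal B$ is merely shifted.  One small correction to your parenthetical aside: inequality~\eqref{eq:6.3} gives $b_{t+1}\le\binom{2k-t}{1}=2k-t$, not $2(k-t)$; the bound $b_{t+1}\le 2(k-t)$ is what follows \emph{after} the claim, since there are only $2(k-t)$ choices of $x$ in $[t+1,2k-t]$, but as you say none of this is needed for the statement itself.
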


\begin{proof}
The statement is trivially true by shiftedness for $b_{t + 1} = 0$, $1$ or $2$.
Suppose $b_{t + 1} \geq 3$.
Then $D_3 \in \mathcal B^{(t + 1)}$.
We claim that $[t] \subset B$ for all $B \in \mathcal B^{(t + 1)}$.

Set $D_i = [t] \cup \{t + i\}$ for $i = 1,2$.
By $D_1 \prec D_2 \prec D_3$, all three are in $\mathcal B^{(t + 1)}$.
In view of Proposition \ref{prop:6.4} (i), $|B \cap D_i| \geq t$, $i = 1,2,3$, implying $[t] \subset B$.
Now Claim \ref{cl:6.6} follows by shiftedness.
\end{proof}

Now we are ready to state and prove the main result of this section.

\setcounter{theorem}{6}
\begin{theorem}
\label{th:6.7}
Suppose that $\mathcal F \subset {[n]\choose k}$ is shifted, $t$-intersecting, $\mathcal F \not\subset \mathcal A_1$ and $b^{(t + 1)} \geq t + 1$.
Then
\beq
\label{eq:6.4}
\bigl|\partial^j \mathcal F\bigr| > {t\choose j} |\mathcal F|.
\eeq
\end{theorem}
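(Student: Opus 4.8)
The plan is to peel off the $t$-star case, use $b_{t+1}\ge t+1$ together with Claims~\ref{cl:6.5}--\ref{cl:6.6} to pin down the structure of $\mathcal F$, split $\mathcal F$ into a $t$-star part and a $b_{t+1}$-star part, bound the $j$-shadow of each by a strict ``$>\binom{t}{j}$'' inequality, and finally fight the overlap of the two shadows.

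First I would dispose of the easy case. If $\mathcal F$ is a $t$-star then, being shifted, $\mathcal F\subset\mathcal A_0$ (one checks $\bigcap_{F\in\mathcal F}F\supseteq[t]$), so Proposition~\ref{prop:5.3} gives $|\partial^j\mathcal F|/|\mathcal F|\ge|\partial^j\mathcal A_0|/|\mathcal A_0|>\binom{t}{j}$ and we are done. Hence by Proposition~\ref{prop:6.4}(iii) we may assume $b_t=0$, i.e.\ $|F\cap[2k-t]|\ge t+1$ for every $F\in\mathcal F$. Since $\mathcal F\not\subset\mathcal A_1$, Claim~\ref{cl:6.5} gives $A_3\notin\mathcal B^{(t+1)}$, so Claim~\ref{cl:6.6} identifies $\mathcal B^{(t+1)}=\{[t]\cup\{x\}:t+1\le x\le t+b_{t+1}\}$ with $b_{t+1}\ge t+1$. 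The key structural step is then: every $F\in\mathcal F$ has $|F\cap[t]|\ge t-1$, because $B=F\cap[2k-t]\in\mathcal B$ must $t$-intersect each of the $b_{t+1}\,(\ge t+1)$ sets $[t]\cup\{x\}\in\mathcal B^{(t+1)}$ and $|B\cap[t]|\le t-2$ makes this impossible; moreover the same condition forces $\{t+1,\dots,t+b_{t+1}\}\subset F$ whenever $|F\cap[t]|=t-1$.

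Thus $\mathcal F=\mathcal F_0\sqcup\mathcal G$, where $\mathcal F_0=\{F\in\mathcal F:[t]\subset F\}$ is a $t$-star with centre $[t]$ (hence $\mathcal F_0\subset\mathcal A_0$, and $\mathcal F_0$ is itself shifted) and $\mathcal G=\{F\in\mathcal F:|F\cap[t]|=t-1\}$ is a $b_{t+1}$-star with centre $Z:=\{t+1,\dots,t+b_{t+1}\}$, $|Z|=b_{t+1}\ge t+1$. If $\mathcal G=\emptyset$ then $\mathcal F=\mathcal F_0$ is a $t$-star and we are back to the easy case, so assume $\mathcal G\neq\emptyset$ (then also $b_{t+1}\le k-t+1$). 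Now I would record the two ``star'' estimates. Applying Proposition~\ref{prop:5.3} to $\mathcal F_0$ --- concretely, Claim~\ref{cl:5.4} together with \eqref{eq:1.1} for $\overline{\mathcal F}_0=\{F\setminus[t]:F\in\mathcal F_0\}$ --- gives
$$
|\partial^j\mathcal F_0|\ \ge\ \Bigl(\binom{t}{j}+\delta\Bigr)|\mathcal F_0|,\qquad
\delta=\sum_{1\le i\le j}\binom{t}{j-i}\frac{\binom{n-t}{k-t-i}}{\binom{n-t}{k-t}}\ >\ 0 .
$$
Repeating the computation of Claim~\ref{cl:5.4} for the $b_{t+1}$-star $\mathcal G$ with centre $Z$ gives $|\partial^j\mathcal G|=\sum_{0\le i\le j}\binom{b_{t+1}}{j-i}\,|\partial^i\overline{\mathcal G}|\ge\binom{b_{t+1}}{j}|\mathcal G|\ge\binom{t+1}{j}|\mathcal G|$; indeed the sets $Z'\cup(F\setminus Z)$ with $Z'\in\binom{Z}{b_{t+1}-j}$ and $F\in\mathcal G$ are $\binom{b_{t+1}}{j}|\mathcal G|$ distinct members of $\partial^j\mathcal G$. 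Both $\binom{t}{j}+\delta>\binom{t}{j}$ and $\binom{b_{t+1}}{j}\ge\binom{t+1}{j}>\binom{t}{j}$ are strict, and it is these two surpluses that must pay for the overlap.

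The last step --- combining the two bounds without double counting --- is the hard part, because $\partial^j\mathcal F_0$ and $\partial^j\mathcal G$ genuinely overlap (both contain sets $G$ with $t-j\le|G\cap[t]|\le t-1$; e.g.\ one checks, using shiftedness, that the sets $Z'\cup(F\setminus Z)$ above already lie in $\partial^j\mathcal F_0$). I would partition $\partial^j\mathcal F$ by $r=|G\cap[t]|\in\{t-j-1,\dots,t\}$ (equivalently, refine by the trace on $[2k-t]$): the layer $r=t$ is produced only by $\mathcal F_0$; the layer $r=t-j-1$, i.e.\ $|[t]\setminus G|=j+1$, can arise only from $\mathcal G$ and contributes at least $\tfrac{1}{j+1}\binom{t-1}{j}|\mathcal G|$ sets disjoint from $\partial^j\mathcal F_0$; and the intermediate layers are treated one at a time, the shortfall being charged against $\delta|\mathcal F_0|$ on the $\mathcal F_0$ side and against $\bigl(\binom{b_{t+1}}{j}-\binom{t}{j}\bigr)|\mathcal G|$ on the $\mathcal G$ side. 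For this bookkeeping to close one expects to need a lower bound roughly of the shape $|\mathcal F_0|\gtrsim\frac{n}{k-t}\,|\mathcal G|$; this is exactly where $b_{t+1}\ge t+1$ is indispensable, since it produces a witness $[t]\cup\{t+b_{t+1}\}\cup T^{\ast}\in\mathcal F_0$ and, for each $F=([t]\setminus\{y\})\cup Z\cup R\in\mathcal G$, a member $[t+b_{t+1}-1]\cup R\prec F$ of $\mathcal F_0$, so that together with the downward-closedness of $\overline{\mathcal F}_0$ the family $\mathcal F_0$ is forced to dominate $\mathcal G$ (and, when $\mathcal F$ lives in a small initial segment, one can instead sharpen $\delta$ by applying \eqref{eq:1.1} in that smaller universe). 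I expect this balancing --- the per-layer accounting together with the lower bound on $|\mathcal F_0|$ --- to be the main obstacle; the rest is routine binomial manipulation in the spirit of Propositions~\ref{prop:2.11} and~\ref{prop:5.3}.
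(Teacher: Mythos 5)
Your structural analysis is right and essentially reproduces the paper's Claim~\ref{cl:6.8}: with $s=b_{t+1}$, every $F\in\mathcal F\setminus\mathcal A_0$ satisfies $F\cap[t+s]=[t+s]\setminus\{y\}$ for some $y\in[t]$, so $\mathcal F$ splits into a star-like part and a part $\mathcal G$ that is almost a $[t+1,t+s]$-star. But the argument then stalls exactly where you flag it: the ordinary shadows $\partial^j\mathcal F_0$ and $\partial^j\mathcal G$ overlap heavily, and your proposed remedy --- a per-layer accounting in $r=|G\cap[t]|$ backed by a lower bound ``$|\mathcal F_0|\gtrsim\frac{n}{k-t}\,|\mathcal G|$'' --- is not established. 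The injection you sketch, $F\mapsto[t+s-1]\cup(F\setminus[t+s])$, is at most $t$-to-one and only gives $|\mathcal F_0|\ge|\mathcal G|/t$, a factor $n/(k-t)$ short of what you say you need. As written, the proposal does not close.

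What you are missing is the paper's key device: do not use the full $j$-shadow, but the \emph{restricted} $j$-shadow relative to $[t+s]$, namely $\partial_R^j F=\bigl\{S\in\binom{F}{k-j}:S\setminus[t+s]=F\setminus[t+s]\bigr\}$, which deletes all $j$ vertices from $F\cap[t+s]$ and leaves the tail untouched. An element of $\partial_R^jF$ remembers $|S\cap[t+s]|=|F\cap[t+s]|-j$ exactly, so if you split $\mathcal F$ by whether $|F\cap[t+s]|=t+s-1$ or not (call the parts $\mathcal F_1$ and $\mathcal F_0=\mathcal F\setminus\mathcal F_1\subset\mathcal A_0$ --- note the split parameter is $|F\cap[t+s]|$, not $|F\cap[t]|$, precisely because that is what $\partial_R^j$ preserves), the two restricted shadows are \emph{automatically disjoint} with no balancing, no per-layer bookkeeping, and no comparison between $|\mathcal F_0|$ and $|\mathcal G|$. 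Each piece is then handled per tail $T=F\setminus[t+s]$ by Sperner's bound \eqref{eq:1.1} inside $[t+s]$: $\mathcal G_T\subset\binom{[t+s]}{t+s-1}$ gives $|\partial^j\mathcal G_T|\ge|\mathcal G_T|\binom{t+s}{j+1}/(t+s)$, and summing over $T$ and using $s\ge t+1$ together with $\binom{2t}{j}\ge 2^j\binom{t}{j}\ge(j+1)\binom{t}{j}$ yields $|\partial_R^j\mathcal F_1|\ge\binom{t}{j}|\mathcal F_1|$; adding this to the star-piece estimate $|\partial_R^j\mathcal F_0|>\binom{t}{j}|\mathcal F_0|$ finishes the proof. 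So the gap is the absence of the restricted-shadow construction that makes the decomposition's shadows disjoint.
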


\begin{proof}
For simpler notation set $s = b_{t + 1}$.
If $\mathcal F \subset \mathcal A_0$, then \eqref{eq:6.4} is evident.
Suppose that $\mathcal F \not\subset \mathcal A_0$.

\setcounter{claim}{7}
\begin{claim}
\label{cl:6.8}
If $F \in \mathcal F \setminus \mathcal A_0$ then
\beq
\label{eq:6.5}
F \cap [t + s] = [t + s] \setminus \{y\} \ \ \ \text{ for some } \ \ y \in [t].
\eeq
\end{claim}

\begin{proof}
In view of Claim \ref{cl:6.6}, $\mathcal B^{(t + 1)} = \bigl\{[t] \cup \{x\} : t < x \leq t + s\bigr\}$.
By Proposition \ref{prop:6.4} (i) $|F \cap B| \geq t$ for all $B \in \mathcal B^{(t + 1)}$.
Since $[t] \not\subset F$, $x \in F$ for all $t < x \leq t + s$ and $|F \cap [t]| = t - 1$.
\end{proof}

Define $\mathcal F_1 = \bigl\{F \in \mathcal F : |F \cap [t + s]| = t + s - 1\bigr\}$.
In view of Claim \ref{cl:6.8}, $\mathcal F \setminus \mathcal A_0 \subset \mathcal F_1$.
Setting $\mathcal F_0 = \mathcal F \setminus \mathcal F_1$, $\mathcal F_0 \subset \mathcal A_0$ follows.
Defining the restricted shadow with respect to $[t + s]$ as
$$
\partial_R^j \mathcal F = \bigcup_{F \in \mathcal F} \partial_R^j F \ \ \ \text{ where } \ \ \
\partial_R^j F = \left\{S \in {F \choose k - j} : S \setminus [t + s] = F\setminus [t + s]\right\},
$$
it should be clear that $|F \cap [t + s]| \neq |F' \cap [t + s]|$ implies $\partial_R^j F \cap \partial_R^j F' = \emptyset$.
Consequently,
\beq
\label{eq:6.5masodik}
\partial_R^j \mathcal F_0 \cap \partial_R^j \mathcal F_1 = \emptyset.
\eeq
For $\mathcal F_0$, $\mathcal F_0 \subset \mathcal A_0$ implies
\beq
\label{eq:6.6}
\bigl|\partial_R^j \mathcal F_0\bigr| > {t \choose j} \bigl|\mathcal F_0\bigr|.
\eeq

To deal with $\mathcal F_1$ define $\mathcal T \subset {[t + s + 1, n]\choose k - t - s + 1}$ by
$$
\mathcal T = \bigl\{F \setminus [t + s] : F \in \mathcal F_1\bigr\}.
$$

For $T \in \mathcal T$ define $\mathcal G_T = \left\{G \in {[t + s]\choose t + s - 1}: G \cup T \in \mathcal F_1 \right\}$.

Now \eqref{eq:1.1} implies
$$
\bigl|\partial^j \mathcal G_T\bigr| \geq \bigl|\mathcal G_T\bigr| \frac{{t + s\choose t + s - 1 - j}}{{t + s\choose 1}} = \bigl|\mathcal G_T\bigr| \frac{{t + s\choose j + 1}}{t + s}.
$$
By definition
$$
\bigl|\mathcal F_1\bigr| = \sum_{t \in \mathcal T} \bigl|\mathcal G_T\bigr| \ \ \ \text{ and } \ \ \
\bigl|\partial_R^j\mathcal F_1\bigr| = \sum_{t \in \mathcal T} \bigl|\partial^j\mathcal G_T\bigr| .
$$
Consequently,
\beq
\label{eq:6.7}
\bigl|\partial_R^j \mathcal F_1 \bigr| \geq \bigl|\mathcal F_1\bigr| \frac{{t + s\choose j + 1}}{t + s}.
\eeq
Let us show that $s \geq t + 1$ implies
$$
\frac{{t + s\choose j + 1}}{t + s} = \frac{{t + s - 1\choose j}}{j + 1} \geq \frac{{2t\choose j}}{j + 1} \geq {t \choose j}.
$$
Indeed,
$$
\frac{{2t\choose j}}{{t \choose j}} = \prod_{0 \leq i < j} \frac{2t - i}{t - i} \geq 2^j \geq j + 1.
$$
Thus adding \eqref{eq:6.6} and \eqref{eq:6.7}, and using \eqref{eq:6.5masodik} imply \eqref{eq:6.4}.
\end{proof}

\begin{rema}
For $j = 1$, $2^1 = 1 + 1$.
However for larger values of $j$ one can considerably relax the condition $b_{t + 1} \geq t + 1$.
\end{rema}

\setcounter{corollary}{7}
\begin{corollary}
\label{cor:6.8}
Suppose that $\mathcal F \subset {[n]\choose k}$ is shifted, $t$-intersecting,
$\mathcal F \not\subset \mathcal A_1$, $t + 2 \leq k - t + 1$.
If
$$
|\mathcal F| > t {n - 2k + t\choose k - t - 1} + \sum_{t + 2 \leq \ell \leq k} {2k - t\choose \ell - t} {n \choose k - \ell}
$$
then
\beq
\label{eq:6.8}
\bigl|\partial^j \mathcal F\bigr| > {t\choose j} |\mathcal F|.
\eeq
\end{corollary}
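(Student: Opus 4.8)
The plan is to funnel the corollary into two results already available: inequality \eqref{eq:5.4} of Proposition~\ref{prop:5.3} for $t$-stars, and Theorem~\ref{th:6.7} for families whose base satisfies $b_{t+1} \geq t + 1$. I would work throughout with the base $\mathcal B = \mathcal B(\mathcal F)$ of the shifted $t$-intersecting family $\mathcal F$; the standing assumption $n \geq (k - t + 1)(t + 1)$ gives $n \geq 2k - t$, so that Proposition~\ref{prop:6.4} and \eqref{eq:6.3} are at our disposal. We may assume $\mathcal F \neq \emptyset$, since otherwise the hypothesis on $|\mathcal F|$ is vacuous.

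First I would dispose of the star case. By Proposition~\ref{prop:6.4}~(iii), $b_t \leq 1$. If $b_t = 1$, then, since $\mathcal B$ is shifted, $[t]$ is the unique $t$-element member of $\mathcal B$; as $\mathcal B$ is $t$-intersecting, every $B \in \mathcal B$ then contains $[t]$, and since $F \cap [2k - t] \in \mathcal B$ for each $F \in \mathcal F$ this forces $\mathcal F \subset \mathcal A_0$. In that case \eqref{eq:6.8} is immediate from \eqref{eq:5.4}. Hence we may assume $b_t = 0$.

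The next step is to show that the hypothesis on $|\mathcal F|$ forces $b_{t+1} \geq t + 1$. Suppose instead $b_{t+1} \leq t$. Feeding into Proposition~\ref{prop:6.4}~(iv) the facts $b_t = 0$, the bound $b_\ell \leq {2k - t \choose \ell - t}$ from \eqref{eq:6.3}, and the monotonicity ${n - 2k + t \choose k - \ell} \leq {n \choose k - \ell}$, one gets
$$
|\mathcal F| \leq b_{t+1} {n - 2k + t \choose k - t - 1} + \sum_{t + 2 \leq \ell \leq k} b_\ell {n - 2k + t \choose k - \ell} \leq t {n - 2k + t \choose k - t - 1} + \sum_{t + 2 \leq \ell \leq k} {2k - t \choose \ell - t} {n \choose k - \ell},
$$
which contradicts the assumed lower bound on $|\mathcal F|$. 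So $b_{t+1} \geq t + 1$, and since $\mathcal F$ is shifted, $t$-intersecting and $\mathcal F \not\subset \mathcal A_1$, Theorem~\ref{th:6.7} applies and yields $\bigl|\partial^j \mathcal F\bigr| > {t \choose j} |\mathcal F|$, which is \eqref{eq:6.8}.

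I do not expect a real obstacle here: the argument is essentially the packaging of earlier results, so the bulk of the work is already done in Sections~\ref{sec:5} and~\ref{sec:6}. The two places demanding a little care are the star reduction — one must argue cleanly (via $\mathcal B$) that a shifted family with $b_t = 1$ is contained in $\mathcal A_0$, so that \eqref{eq:5.4} is legitimately invoked — and matching the binomial bookkeeping in the size estimate exactly to the threshold in the statement, in particular the coefficient $t$ on ${n - 2k + t \choose k - t - 1}$ coming from $b_{t+1} \leq t$ and the relaxation of ${n - 2k + t \choose k - \ell}$ to ${n \choose k - \ell}$. The condition $t + 2 \leq k - t + 1$ plays the same delimiting role it does in Corollary~\ref{cor:6.3}, marking the complementary range $k > 2t$; it is not otherwise needed in the argument above.
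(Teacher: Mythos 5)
Your argument is correct and matches the paper's proof: dispose of the case $\mathcal F \subset \mathcal A_0$ via \eqref{eq:5.4}, and otherwise use $b_t = 0$ together with the size hypothesis, Proposition~\ref{prop:6.4}~(iv) and \eqref{eq:6.3} to force $b_{t+1} \geq t+1$, whereupon Theorem~\ref{th:6.7} applies. You have simply unpacked the paper's terse line ``thereby $b_{t+1}\geq t+1$ follow from Proposition~\ref{prop:6.4}'' by making explicit that it also invokes the lower bound on $|\mathcal F|$.
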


\begin{proof}
If $\mathcal F \subset \mathcal A_0$ then \eqref{eq:6.8} is evident.
Otherwise $b_t = 0$ and thereby $b_{t + 1} \geq t + 1$ follow from Proposition \ref{prop:6.4}.
Now \eqref{eq:6.8} is a consequence of Theorem \ref{th:6.7}.
\end{proof}

\section{A general bound}
\label{sec:7}

To make notation simpler let us define $\gamma(\ell, t, j) = {t + 2(\ell - t)\choose t + \ell - j} \Bigm/ {t + 2(\ell - t)\choose t + \ell}$.
Consider a shifted $t$-intersecting family $\mathcal F \subset {[n]\choose k}$.
Recall the definition of $w_t(\mathcal F)$ as the minimal integer $w$, $0 \leq w \leq k - t$ such that for every $F \in \mathcal F$ there exists $\ell = \ell(F)$, $0 \leq \ell \leq w$, with
\beq
\label{eq:7.1}
\bigl|F \cap [t + 2\ell]\bigr| \geq t + \ell.
\eeq
Since \eqref{eq:7.1} holds with $i$ for $F \in \mathcal A_i$, $w_t(\mathcal F) > i$ implies $\mathcal F \not\subset \mathcal A_0 \cup \ldots \cup \mathcal A_i$.

Suppose that
\beq
\label{eq:7.2}
\bigl|\partial^j \mathcal F\bigr| \bigm/ |\mathcal F| < \gamma(w, t, j).
\eeq
By Theorem \ref{th:2.10}, $\mathcal F \not\subset \mathcal A_0 \cup \ldots \cup \mathcal A_w$.
That is, we can find some $F \in \mathcal F$ failing \eqref{eq:7.1} for all $0 \leq \ell \leq w$.

Define $E = [t - 1] \cup (t + 1, t + 3, \ldots, t + 2w + 1) \cup [t + 2w + 2, k + w + 1]$.
Then $E \prec F$ and by shiftedness $E \in \mathcal F$.

Define $D = [t] \cup (t + 2, \dots, t + 2w) \in {[2k - t]\choose t + w}$.
Note that $|E \cap D| = t - 1$.
This permits to prove

\begin{proposition}
\label{prop:7.1}
If $G \in \mathcal F$ then either {\rm (i)} or {\rm (ii)} hold.

\hspace*{3.4pt}{\rm (i)} $\bigl|G \cap [t + 1 + 2h]\bigr| \geq t + 1 + h$ for some $0 \leq h < w$.

\smallskip
{\rm (ii)} $\bigl|G \cap [2k - t]\bigr| > w + t$.
\end{proposition}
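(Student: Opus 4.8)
The plan is to prove the proposition by contradiction: assume some $G=(g_1,\dots,g_k)\in\mathcal F$ satisfies neither (i) nor (ii), and exhibit a $k$-set $\widetilde D\in\mathcal F$ with $|\widetilde D\cap E|=t-1<t$. Since $E\in\mathcal F$, this contradicts the $t$-intersecting property. The set $\widetilde D$ will be the fixed $(t+w)$-set $D$ padded on top by a block of integers lying above $2k-t$.

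First I would translate the two failures into coordinate inequalities for $G$. The failure of (i) means $\bigl|G\cap[t+1+2h]\bigr|\le t+h$ for every $0\le h<w$; writing $m=h+1$ this reads $g_{t+m}\ge t+2m$ for $1\le m\le w$, and together with the trivial $g_i\ge i$ for $i\le t$ it says precisely that the first $t+w$ coordinates of $G$ dominate $D=(1,\dots,t,\,t+2,\dots,t+2w)$. I would also record here that $w\le k-t-1$: the set $F$ used to construct $E$ violates \eqref{eq:7.1} for every $\ell\le w$, while Proposition \ref{prop:2.2} guarantees \eqref{eq:7.1} for some $\ell\le k-t$, so $w<k-t$. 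The failure of (ii) means $\bigl|G\cap[2k-t]\bigr|\le t+w$; since $t+w+1\le k$, this forces $g_{t+w+1}\ge 2k-t+1$, and hence $g_{t+w+q}\ge 2k-t+q$ for all $1\le q\le k-t-w$.

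Now set
$$
\widetilde D=D\cup\bigl\{\,2k-t+1,\ 2k-t+2,\ \dots,\ 3k-2t-w\,\bigr\}.
$$
Because $w\le k-t-1$, the new block lies strictly above $\max D=t+2w$, so the union is disjoint and $|\widetilde D|=(t+w)+(k-t-w)=k$. Verifying $\widetilde D\prec G$ splits into the three coordinate ranges $[1,t]$, $[t+1,t+w]$, $[t+w+1,k]$: on the first use $g_i\ge i$, on the second use $g_{t+m}\ge t+2m$ (failure of (i)), on the third use $g_{t+w+q}\ge 2k-t+q$ (failure of (ii)). By shiftedness $\widetilde D\in\mathcal F$. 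On the other hand every element of $E$ is at most $k+w+1$, while $2k-t+1>k+w+1$ because $w<k-t$; hence the new block of $\widetilde D$ is disjoint from $E$ and $\widetilde D\cap E=D\cap E$, which has size $t-1$. This is the desired contradiction.

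I expect the only delicate point to be the coordinatewise verification of $\widetilde D\prec G$, i.e.\ keeping straight which of the three ranges uses which hypothesis; everything else — the construction of $E$, the identity $|D\cap E|=t-1$, and the inequality $w\le k-t-1$ — is either already in the paper or immediate. Conceptually the mechanism is transparent: failing (i) pushes the low coordinates $g_{t+m}$ up to $t+2m$, and failing (ii) pushes $g_{t+w+1}$ past $2k-t$, which is exactly the room needed to slide the padded copy $\widetilde D$ of $D$ underneath $G$ while keeping it disjoint — apart from $[t-1]$ — from $E$.
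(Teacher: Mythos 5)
Your proof is correct, and it takes a mildly different route from the paper's. The paper passes to the trace family $\mathcal B = \{F \cap [2k-t] : F \in \mathcal F\}$, invokes Proposition~\ref{prop:6.4}(i) (that $\mathcal B$ is shifted and $t$-intersecting), and shows $D_h = D \cap [t+2h] \not\prec G \cap [2k-t]$ (with $t+h = |G\cap[2k-t]|$); a violated coordinate must lie in the range $(t, t+h]$ and that is exactly statement (i). You instead stay inside $\mathcal F$: assuming both (i) and (ii) fail, you pad $D$ with the interval $[2k-t+1, 3k-2t-w]$ to a genuine $k$-set $\widetilde D$, check $\widetilde D \prec G$ coordinate by coordinate, and then hit the $t$-intersection of $\mathcal F$ directly via $|\widetilde D \cap E| = t-1$. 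The conceptual mechanism is identical (force a $D$-like configuration below $G$ and play it against $E$), but your version avoids Proposition~\ref{prop:6.4} for this step, at the cost of the padding construction and the explicit observation $w \le k-t-1$ (which you correctly derive from the existence of $F$ violating \eqref{eq:7.1} for all $\ell \le w$ together with Proposition~\ref{prop:2.2}). Both the coordinate translation of ``not (i)'' into $g_{t+m} \ge t+2m$ for $1\le m\le w$ and of ``not (ii)'' into $g_{t+w+q} \ge 2k-t+q$, and the disjointness of the padding block from $E$ (using $2k-t+1 > k+w+1$), are right; the argument is sound.
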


\begin{proof}
Suppose that (ii) does not hold.
Let $|G \cap [2k - t]| = t + h$ for some $0 \leq h \leq w$.
Set $D_h = D \cap [t + 2h]$.
Since $\bigl|E \cap D_h\bigr| = t - 1$, we infer $D_h \not\prec G \cap [2k - t]$ by shiftedness and Proposition \ref{prop:6.4}.
Thus (i) follows.
\end{proof}

Define the partition $\mathcal F = \mathcal F_{\text{in}} \cup \mathcal F_{\text{out}}$ by
\begin{align*}
\mathcal F_{\text{in}} &= \bigl\{F \in \mathcal F : |F \cap [2k - t] | > w + t\bigr\},\\
\mathcal F_{\text{out}} &= \bigl\{F \in \mathcal F : |F \cap [2k - t] | \leq w + t\bigr\}.
\end{align*}

With the definition of restricted $j$-shadows as in Definition~\ref{def:2.6} we have
\beq
\label{eq:7.3}
\bigl|\partial^j \mathcal F\bigr| \geq \bigl|\partial_R^j \mathcal F_{\text{in}}\bigr| + \bigl|\partial_R^j \mathcal F_{\text{out}}\bigr|.
\eeq

In view of Proposition \ref{prop:6.4}, the family $\bigl\{F \cap [2k - t] : F \in \mathcal F\bigr\}$ is $t$-intersecting.
Thus by Theorem \ref{th:1.3} we have
\beq
\label{eq:7.4}
\bigl|\partial_R^j \mathcal F_{\text{in}}\bigr| \geq \gamma(k - t, t, j)\bigl|\mathcal F_{\text{in}}\bigr|.
\eeq
As to $\mathcal F_{\text{out}}$, Proposition \ref{prop:7.1} (i) implies that it is pseudo $t + 1$-intersecting with $w_{t + 1}\bigl(\mathcal F_{\text{out}}\bigr) \leq w - 1$.
By Theorem \ref{th:2.10} we have
\beq
\label{eq:7.5}
\bigl|\partial_R^j \mathcal F_{\text{out}}\bigr| \geq \gamma(w - 1, t + 1, j) \bigl|\mathcal F_{\text{out}}\bigr|.
\eeq
Defining $\alpha$, $\beta$ by
$$
\alpha = \gamma(w, t, j) - \gamma(k - t, t, j) \ \ \ \text{ and } \ \ \ \beta = \gamma(w - 1, t + 1, j) - \gamma(w, t, j)
$$
we infer from \eqref{eq:7.3}, \eqref{eq:7.4} and \eqref{eq:7.5}
$$
\bigl|\partial^j \mathcal F\bigr| \geq \gamma(w, t, j) |\mathcal F| + \beta \bigl|\mathcal F_{\text{out}}\bigr| - \alpha \bigl|\mathcal F_{\text{in}}\bigr|.
$$
Thus we proved

\begin{proposition}
\label{prop:7.2}
If $\bigl|\mathcal F_{\text{out}}\bigr| \geq \frac{\alpha}{\beta} \bigl|\mathcal F_{\text{in}}\bigr|$ then
\beq
\label{eq:7.6}
\bigl|\partial^j \mathcal F\bigr| \geq \gamma(w, t, j)|\mathcal F|.
\eeq
\end{proposition}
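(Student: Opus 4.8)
The proposition is the arithmetic step that closes the chain of estimates set up above, so the proof will be short; the plan is essentially to read off \eqref{eq:7.6} from the inequality that precedes the statement. Concretely, combining \eqref{eq:7.3}, \eqref{eq:7.4} and \eqref{eq:7.5} we already have
\[
\bigl|\partial^j \mathcal F\bigr| \geq \gamma(w,t,j)\,|\mathcal F| + \beta\,\bigl|\mathcal F_{\text{out}}\bigr| - \alpha\,\bigl|\mathcal F_{\text{in}}\bigr| ,
\]
so \eqref{eq:7.6} will follow as soon as the surplus $\beta\bigl|\mathcal F_{\text{out}}\bigr| - \alpha\bigl|\mathcal F_{\text{in}}\bigr|$ is shown to be nonnegative. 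Since the hypothesis is $\bigl|\mathcal F_{\text{out}}\bigr| \geq \frac{\alpha}{\beta}\bigl|\mathcal F_{\text{in}}\bigr|$, this reduces to multiplying through by $\beta$ — which is legitimate precisely when $\beta > 0$, and then the sign of $\alpha$ plays no role. Hence the only point that is not pure rearrangement is the positivity of $\beta$.

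To establish $\beta = \gamma(w-1,t+1,j) - \gamma(w,t,j) > 0$ I would expand both constants as products. Writing $\gamma(\ell,t,j)$ in the product form coming from its definition (the one consistent with the Theorem~\ref{th:2.10} coefficients used throughout this section), one gets
\[
\gamma(w,t,j) = \prod_{1\leq i\leq j}\frac{t+w-j+i}{w+i}, \qquad \gamma(w-1,t+1,j) = \prod_{1\leq i\leq j}\frac{t+w-j+i}{w-1+i} .
\]
The two products have identical numerators, while each denominator of the second is smaller by $1$; therefore $\gamma(w-1,t+1,j) > \gamma(w,t,j)$, i.e.\ $\beta>0$. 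This is exactly the monotonicity phenomenon already recorded in Proposition~\ref{prop:2.11}, and it is the whole substance of the proof. One should note that the argument sits inside the standing situation $\mathcal F \not\subset \mathcal A_0\cup\ldots\cup\mathcal A_w$ under which the set $E$, the partition $\mathcal F=\mathcal F_{\text{in}}\cup\mathcal F_{\text{out}}$ and \eqref{eq:7.5} were produced; in the complementary case $\mathcal F$ is pseudo $t$-intersecting of width at most $w$, and Theorem~\ref{th:2.10} together with Proposition~\ref{prop:2.11} gives \eqref{eq:7.6} outright, so there is nothing to prove there.

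With $\beta>0$ in hand the conclusion is immediate: the hypothesis gives $\beta\bigl|\mathcal F_{\text{out}}\bigr| \geq \alpha\bigl|\mathcal F_{\text{in}}\bigr|$, hence $\beta\bigl|\mathcal F_{\text{out}}\bigr| - \alpha\bigl|\mathcal F_{\text{in}}\bigr| \geq 0$, and substituting into the displayed inequality yields $\bigl|\partial^j\mathcal F\bigr| \geq \gamma(w,t,j)|\mathcal F|$, which is \eqref{eq:7.6}. I do not foresee any genuine obstacle: apart from the one-line verification that $\beta>0$ — immediate from the product form of the binomial ratios — the proof is just the assembly of \eqref{eq:7.3}, \eqref{eq:7.4} and \eqref{eq:7.5}, which are already in place.
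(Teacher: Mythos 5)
Your proof is correct and follows the paper's own argument: combine \eqref{eq:7.3}--\eqref{eq:7.5} to obtain $\bigl|\partial^j\mathcal F\bigr|\geq\gamma(w,t,j)|\mathcal F|+\beta\bigl|\mathcal F_{\text{out}}\bigr|-\alpha\bigl|\mathcal F_{\text{in}}\bigr|$ and then use the hypothesis. The two points you flag explicitly --- that $\beta>0$ (clear from the product form of the $\gamma$'s) and that the case $\mathcal F\subset\mathcal A_0\cup\cdots\cup\mathcal A_w$ is already covered by Theorem~\ref{th:2.10} --- are exactly what the paper leaves implicit, and you handle both correctly.
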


Note that $\alpha$ and $\beta$ are independent of $n$, that is, $\frac{\alpha}{\beta}$ is a constant.
Also, to bound $\bigl|\mathcal F_{\text{in}}\bigr|$ we may use \eqref{eq:6.3} and Proposition \ref{prop:6.4}:
$$
\bigl|\mathcal F_{\text{in}}\bigr| \leq \sum_{w < \ell \leq k - t} {2k - t\choose \ell} {n - 2k + t\choose k - \ell - t} = (1 + o(1)) {2k - t\choose w + 1}{n - 2k + t\choose k - w - t - 1}.
$$
If \eqref{eq:7.6} fails then
\begin{align*}
\bigl|\mathcal F_{\text{out}}\bigr| &< \left(\frac{\alpha}{\beta} + o(1)\right) {2k - t\choose w + 1} {n - 2k + t\choose k - w - t - 1}, \ \text{ i.e.,}\\
|\mathcal F| &< \frac{\alpha + \beta + o(1)}{\alpha} {2k - t\choose w + 1} {n - 2k + t\choose k - w - t - 1}.
\end{align*}
That is, we proved the following

\setcounter{theorem}{2}
\begin{theorem}
\label{th:7.3}
Suppose that $\mathcal F \subset {[n]\choose k}$ is $t$-intersecting,
\beq
\label{eq:7.7}
|\mathcal F| > \frac{\alpha + \beta + o(1)}{\alpha} {2k - t\choose w + 1} {n - 2k + t\choose k - w - t - 1}.
\eeq
Then
\beq
\label{eq:7.8}
\bigl|\partial^j \mathcal F\bigr| \geq \gamma(w, t, j)|\mathcal F|.
\eeq
\end{theorem}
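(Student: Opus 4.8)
The statement is essentially the contrapositive of the chain of estimates assembled above, so the plan is: assume the shadow bound \eqref{eq:7.8} fails and deduce that $|\mathcal F|$ falls below the threshold in \eqref{eq:7.7}. Since shifting preserves $t$-intersection and never increases $\bigl|\partial^j\mathcal F\bigr|$, I may assume $\mathcal F$ is shifted; and $w<k-t$ may be assumed, for otherwise $\gamma(w,t,j)={2k-t\choose k-j}/{2k-t\choose k}$ and \eqref{eq:7.8} is exactly Theorem \ref{th:1.3}. So suppose \eqref{eq:7.2} holds, i.e.\ $\bigl|\partial^j\mathcal F\bigr|<\gamma(w,t,j)\,|\mathcal F|$.

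First I extract a ``bad'' member of $\mathcal F$. If $w_t(\mathcal F)\le w$, then Theorem \ref{th:2.10} combined with the monotonicity of $\gamma(\cdot,t,j)$ from Proposition \ref{prop:2.11}(i) gives $\bigl|\partial^j\mathcal F\bigr|\ge\bigl|\partial_R^j\mathcal F\bigr|\ge\gamma(w_t(\mathcal F),t,j)\,|\mathcal F|\ge\gamma(w,t,j)\,|\mathcal F|$, contradicting \eqref{eq:7.2}; hence $\mathcal F\not\subset\mathcal A_0\cup\dots\cup\mathcal A_w$, and some $F=(a_1,\dots,a_k)\in\mathcal F$ violates \eqref{eq:7.1} for every $0\le\ell\le w$. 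Out of this $F$ I build $E=[t-1]\cup(t+1,t+3,\dots,t+2w+1)\cup[t+2w+2,k+w+1]$: the failure of \eqref{eq:7.1} at level $\ell$ forces $a_{t+\ell}\ge t+2\ell+1$, which pushes the tail of $F$ up as well, so $E\prec F$ coordinatewise and $E\in\mathcal F$ by shiftedness. Its companion $D=[t]\cup(t+2,\dots,t+2w)\in{[2k-t]\choose t+w}$ satisfies $|E\cap D|=t-1$, and since $w<k-t$ the set $E$ lies in $[2k-t]$, hence in the base $\mathcal B$.

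Next I split $\mathcal F=\mathcal F_{\mathrm{in}}\cup\mathcal F_{\mathrm{out}}$ according to whether $|F\cap[2k-t]|>w+t$ or $\le w+t$, and bound the restricted $j$-shadows of the two parts separately; since these shadows delete $j$ vertices only from $[2k-t]$, the $\mathcal F_{\mathrm{in}}$-side images have more than $w+t-j$ elements in $[2k-t]$ while the $\mathcal F_{\mathrm{out}}$-side ones have at most that many, so the two are disjoint and \eqref{eq:7.3} holds. For $\mathcal F_{\mathrm{in}}$, the base $\{F\cap[2k-t]:F\in\mathcal F_{\mathrm{in}}\}$ is $t$-intersecting (Proposition \ref{prop:6.4}(i)) and Theorem \ref{th:1.3} yields \eqref{eq:7.4}. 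For $\mathcal F_{\mathrm{out}}$, Proposition \ref{prop:7.1} shows every member satisfies alternative (i) (alternative (ii) being excluded on $\mathcal F_{\mathrm{out}}$), so $\mathcal F_{\mathrm{out}}$ is pseudo $(t+1)$-intersecting of width $\le w-1$, and Theorem \ref{th:2.10} with $t+1$ in place of $t$ gives \eqref{eq:7.5}. Writing $\alpha=\gamma(w,t,j)-\gamma(k-t,t,j)$ and $\beta=\gamma(w-1,t+1,j)-\gamma(w,t,j)$ --- both positive by Proposition \ref{prop:2.11} --- these combine into Proposition \ref{prop:7.2}: $\bigl|\partial^j\mathcal F\bigr|\ge\gamma(w,t,j)\,|\mathcal F|+\beta\bigl|\mathcal F_{\mathrm{out}}\bigr|-\alpha\bigl|\mathcal F_{\mathrm{in}}\bigr|$. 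Under \eqref{eq:7.2} this forces $\bigl|\mathcal F_{\mathrm{out}}\bigr|<\tfrac{\alpha}{\beta}\bigl|\mathcal F_{\mathrm{in}}\bigr|$; plugging in the bound $\bigl|\mathcal F_{\mathrm{in}}\bigr|=(1+o(1)){2k-t\choose w+1}{n-2k+t\choose k-w-t-1}$ obtained from \eqref{eq:6.3} and Proposition \ref{prop:6.4}(iv) yields $|\mathcal F|<\tfrac{\alpha+\beta+o(1)}{\alpha}{2k-t\choose w+1}{n-2k+t\choose k-w-t-1}$, the negation of \eqref{eq:7.7}.

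The main obstacle is Proposition \ref{prop:7.1}. Given $G\in\mathcal F$ with $|G\cap[2k-t]|=t+h$ and $h\le w$, set $D_h=D\cap[t+2h]$, which has exactly $t+h$ elements and meets $E$ in only $t-1$ of them. If $D_h\prec G\cap[2k-t]$, shiftedness of $\mathcal B$ (Proposition \ref{prop:6.4}(i)) forces $D_h\in\mathcal B$, contradicting $|D_h\cap E|<t$; hence $D_h\not\prec G\cap[2k-t]$. Comparing the two $(t+h)$-tuples coordinate by coordinate, the first $t$ entries of $D_h$ are $1,\dots,t$ and cannot be the offending coordinate, so it must be some entry $t+m$ with $1\le m\le h$, and $d_{t+m}=t+2m>g_{t+m}$ translates into $|G\cap[t+1+2(m-1)]|\ge t+1+(m-1)$ --- exactly alternative (i) with $0\le m-1<w$. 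I expect this coordinatewise bookkeeping, together with verifying the positivity of $\alpha$ and $\beta$ from Proposition \ref{prop:2.11} and dispatching the degenerate cases $w\in\{0,k-t\}$, to be the only genuinely delicate points; everything else is an assembly of Theorems \ref{th:1.3} and \ref{th:2.10}.
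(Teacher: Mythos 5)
Your proposal retraces the paper's own proof of Theorem~\ref{th:7.3} essentially step for step: shift, dispose of $w = k - t$ via Theorem~\ref{th:1.3}, extract a bad $F$ and build $E$ and $D$, prove Proposition~\ref{prop:7.1} by the coordinate comparison of $D_h$ against $G \cap [2k - t]$ inside the shifted base, split $\mathcal F$ into $\mathcal F_{\text{in}}$ and $\mathcal F_{\text{out}}$, apply Theorems~\ref{th:1.3} and~\ref{th:2.10} to obtain the factors $\gamma(k - t, t, j)$ and $\gamma(w - 1, t + 1, j)$, assemble Proposition~\ref{prop:7.2}, and bound $\bigl|\mathcal F_{\text{in}}\bigr|$ from Proposition~\ref{prop:6.4} and~\eqref{eq:6.3}. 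The decomposition, the choices of $E$, $D$, and the use of Proposition~\ref{prop:7.1} are identical to the paper's.

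One arithmetic point worth flagging, shared with the published text: from $\bigl|\mathcal F_{\text{out}}\bigr| < \frac{\alpha}{\beta}\bigl|\mathcal F_{\text{in}}\bigr|$ together with $\bigl|\mathcal F_{\text{in}}\bigr| \leq (1 + o(1)){2k - t \choose w + 1}{n - 2k + t \choose k - w - t - 1}$ the conclusion that actually follows is $|\mathcal F| < \frac{\alpha + \beta + o(1)}{\beta}{2k - t \choose w + 1}{n - 2k + t \choose k - w - t - 1}$, i.e., with $\beta$, not $\alpha$, in the denominator; your last display (which copies the paper's) is not a consequence of your inequalities unless $\alpha \leq \beta$. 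You also defer, without carrying it out, the degenerate case $w = 0$, in which $\gamma(w - 1, t + 1, j)$ and hence $\beta$ are undefined and the two-part estimate must be replaced by something else (this case is in fact the territory of Section~\ref{sec:6}). Neither of these affects the soundness of the overall strategy.
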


There are many ways that Theorem \ref{th:7.3} can be improved.
The simplest is to replace ${2k - t\choose w + 1}$ by ${2k - t - 1\choose w + 1}$ unless $w = k - t$ (the case that we treated in Theorem \ref{th:1.4}).
More substantial is the improvement that except for the part of $\mathcal F_{\text{in}}$ contained in $\mathcal A_{\ell + 1} \cup \mathcal A_{\ell + 2} \cup \ldots \cup \mathcal A_{k - t}$ one can replace the factor $\gamma(k - t, t, j)$ in \eqref{eq:7.4} by the larger $\gamma(\ell, t, j)$ leading to a considerably smaller value of~$\alpha$.

For $n \to \infty$, $\bigl|\mathcal A_{i + 1}\bigr| = O\bigl(\bigl|\mathcal A_i \bigr| / n\bigr)$ showing that asymptotically only \hbox{$\gamma(w + 1, t, j)$} matters.
That is, Theorem \ref{th:7.3} holds with $\alpha = \gamma(w + 1, t, j) + \varepsilon$ for any $\varepsilon > 0$ and $n > n_0(\varepsilon)$.

Let us close the paper by an open problem.

\setcounter{problem}{3}
\begin{problem}
\label{pr:7.4}
Determine or estimate the smallest value of $c = c(k, t, j)$ such that \eqref{eq:7.8} holds whenever $n > n_0(k, t, j)$ and $|\mathcal F| > c {n\choose k - w - t - 1}$.
\end{problem}


\frenchspacing

\end{document}